\numberwithin{equation}{section}       % Number formulas within sections
\theoremstyle{plain}
\newtheorem{theorem}{Theorem}[section]
\newtheorem*{theorem*}{Theorem}
\newtheorem{prop}{Proposition}[section]
\newtheorem{remark}[prop]{Remark}
\newtheorem{coro}[prop]{Corollary}
\newtheorem{lemma}[prop]{Lemma}
\newtheorem*{mainthm}{Main Theorem}
\newtheorem*{mainthm-hyp}{Main Theorem (Hyperbolic)}
\newtheorem*{mainthm-par}{Main Theorem (Parabolic)}
\newtheorem{example}[prop]{Example}
\newenvironment{proofof}[1]{\medskip \noindent{\it Proof of #1.}}{ \hfill\qed\\ }
\newcommand{\newreptheorem}[2]{\newtheorem*{rep@#1}{\rep@title}\newenvironment{rep#1}[1]{\def\rep@title{#2 \ref*{##1}}\begin{rep@#1}}{\end{rep@#1}}}
\theoremstyle{definition}
\newtheorem{definition}[prop]{Definition}
\newtheoremstyle{citing}% name
  {3pt}%      Space above, empty = `usual value'
  {3pt}%      Space below
  {\itshape}% Body font
  {}%         Indent amount (empty = no indent, \parindent = para indent)
  {\bfseries}% Thm head font
  {.}%        Punctuation after thm head
  {.5em}%     Space after thm head: " " = normal interword space;
\theoremstyle{citing}
\theoremstyle{empty} %%% NEED THIS !!!!
\DeclareMathAlphabet{\mathpzc}{OT1}{pzc}{m}{it} % Zapf Chancery math alphabet
\newcommand{\C}{\mathbb{C}}
\newcommand{\D}{\mathbb{D}}
\newcommand{\R}{\mathbb{R}}
\newcommand{\Z}{\mathbb{Z}}
\newcommand{\m}{p}
\newcommand{\cC}{\mathcal{C}}
\newcommand{\teta}{\widetilde{\teta}}
\newcommand{\eps}{\varepsilon}
\newcommand{\dist}{d}
\DeclareMathOperator{\diam}{diam}
\def\bv{\Big\vert}
\patchcmd{\@addmarginpar}{\ifodd\c@page}{\ifodd\c@page\@tempcnta\m@ne}{}{}
\begin{document}
\title[Transversality in hyperbolic and parabolic maps]{Transversality in the setting of hyperbolic and parabolic maps } %
\author{Genadi Levin, Weixiao Shen and Sebastian van Strien}

\maketitle

{\centering\footnotesize Dedicated to Lawrence Zalcman.\par}

\bigskip

\begin{abstract} In this paper we consider families of holomorphic maps defined on subsets of the complex plane,
and  show that the technique developed in
\cite{LSvS1}
%\cite{LSvS1a}
to treat unfolding of critical relations can also be used to deal with cases where the critical orbit converges to a
hyperbolic attracting or a parabolic periodic orbit.   As before this result applies to rather general
families of maps,  such as polynomial-like mappings,  provided some lifting property holds.
Our Main Theorem states that either the multiplier of a hyperbolic attracting periodic orbit depends univalently
on the parameter and bifurcations at parabolic periodic points are generic, or one has persistency of periodic orbits
with a fixed multiplier.
\end{abstract}
%\marginpar{\vskip 2cm
%New abstract\\
%S: added more \\ references.}
%\marginpar{Sh: moved to the end of sec 1}

%\date{\today}

%\setcounter{tocdepth}{1}
%\setcounter{secnumdepth}{2}
%%\setcounter{tocdepth}{2}
%%\setcounter{secnumdepth}{3}
%
%\tableofcontents

\section{Introduction}
When studying families of maps defined on an open subset of the
complex plane, it is useful to have certain transversality properties. For example, do  multipliers of attracting periodic points
depend univalently on the parameter and  do  parabolic periodic  points undergo
generic bifurcations?   Building on a method developed in \cite{LSvS1}
%,{LSvS1a}
 we establish  such transversality results in a very general setting.
 The conclusion of our  Main Theorem states that one has either {\em such transversality} or
 {\em persistency of periodic points with the same multiplier holds.}

 The key assumption in our Main Theorem is a so-called {\em  lifting property}
 defined in \S\ref{subsec:lifting}.  It turns
 out that this assumption is applicable in rather general settings, %for example to
 including
 families of maps with an infinite number   of singular values,   such as polynomial-like mappings and also %~\marginpar{removed}%to
 maps with essential singularities.

Although the Main Theorem applies to complex maps, let us first mention applications
to certain families of real maps.
For example, consider the periodic doubling cascade associated to the family
$f_\lambda=\lambda x (1-x)$, $x\in [0,1]$ and $\lambda\in [0,4]$. It is well-known that the multiplier $\kappa(\lambda)$
of attracting periodic
orbit decreases in $\lambda$ diffeomorphically in each interval for which $\kappa(\lambda)\in [-1,1)$ and that one has
generic bifurcations when $\kappa(\lambda)=\pm 1$.
An application of our result is that
the same conclusion holds for families of the form  $f_\lambda(x)=\lambda f(x)$ and similarly for $g_c(z)=g(z)+c$
where $f$ and $g$ are rather general interval maps.
\iffalse
Indeed, the maps we will consider
need only  be locally defined, and includes families
of the form
\begin{equation}\label{examples}
f_c(x)=|x|^\ell +c\mbox{ and }g_c(z)= b e^{-1/|z|^\ell} +c.
\end{equation}
\fi
An important feature of our method is that we obtain
information about the sign of the derivative of $\kappa'$, namely that  $\kappa'>0$, see \S\ref{sec:realmaps}.

The main aim of this paper is to obtain results which apply to maps
which are defined only locally, e.g.,  having
essential singularities. It turns out that many more classical results can be recovered
also.
Before stating our results more formally, let us discuss previous results
and the approach that is used in this paper.

% discuss the ideas that are used in this paper and previous approaches.
%In   \S\ref{sec:results} we state our results more formally.

%\begin{theorem*} \marginpar{SS:  Thm correctly stated?}
%
%Let $f$ be a rational map of degree $d\ge 2$ with marked critical points $c_1(f),\dots,c_\nu(f)$
%with multiplicities $m_1,\dots,m_\nu$. Embed $f$ into the space $\Lambda$ of conformal conjugacy classes of rational maps $g$ with marked critical points of the same multiplicities such that two critical points have the same image for $g$ if and only if this is the case for $f$. Let $O_1,\dots,O_p$ be the periodic orbits of $f$
%with multipliers $\kappa_1,\dots,\kappa_p \in \D\setminus \{1\}$. Then the map
% $\Lambda\ni g\mapsto (\kappa_1(g),\dots,\kappa_p(g))$ is a holomorphic map with maximal rank.
%\end{theorem*}

%\subsection{History of the problem and comparison with other approaches}

The study of the dependence of a multiplier on parameters goes back to
Douady-Hubbard  \cite{DH0,DH1}, who obtained the celebrated result (using Sullivan's quasiconformal surgery)  that the multiplier map $\kappa$ uniformizes hyperbolic
components of the Mandelbrot set.
Milnor \cite{Milnor1}  generalized this result to spaces of hyperbolic polynomials and Rees  \cite{Rees} to the space of degree two rational maps.  Douady-Hubbard theorem is equivalent to the local claim that $\kappa'\neq 0$ whenever $|\kappa|<1$.
%This inequality extends to the boundary $|\kappa|=1$ excluding (in general) $\kappa=1$, and to a certain (transversality) inequality
%for $\kappa=1$ which is equivalent to generic bifurcation for such parameter \cite{DH1}.
In \cite{Le3} and independently in \cite{Ep2} the latter local result is generalized to spaces of polynomials and rational maps.

In \cite{Le3}, a polynomial or a rational function $f$ is considered which has $r$ cycles $O_i$ with corresponding multipliers $\kappa_i\in\overline\D=\{|\kappa|\le 1\}$
%$\kappa_i\in\overline\D$, $1\le i\le r$
such that if, for some $i$, $\kappa_i=1$ then $O_i$ is not degenerate
and if $\kappa_i=0$ then $O_i$ contains a single critical point which is simple.
Now consider a local space $\Lambda$,  containing $f$,
of functions with a constant number of different critical values and with constant multiplicities at the critical points. Then the multipliers of those $r$ cycles contribute $r$ independent parameters
in the coordinate system of $\Lambda$, see \cite[Theorems 2 and 6]{Le3} for precise statements.
%Notably, this covers also the case of a quadratic map with a multiplier $\kappa=1$ which is another important result on \cite{DH1}. This case is of a particular interest in our applications to real families.
The proof, which is a development of \cite{Le0}, \cite{LSY}, \cite{Le1}, \cite{Le2},
see also \cite{Mak},
relies on a non-trivial identity \cite[Theorems 1 and 5]{Le3} between, on the one hand, a specific function associated to a given cycle $O$ of $f$, and on the other hand, the gradient $\nabla\kappa$ of the multiplier of this cycle as a function of the coordinates of $\Lambda$. If $\kappa=1$, a modification is needed by replacing the gradient
by 
$$\lim(1-\kappa(g))\nabla\kappa(g)$$ as $g\to f$
along $g$ having a cycle near $O$ with multiplier $\kappa(g)\neq 1$.
A calculation as in Lemma \ref{lem2.2}
shows that
the limit is equal to $$(f^q)''(a)\nabla g^q|_{g=f}(a)$$ (independently on $a\in O$).
In particular, if $f(z)=z^2+c_1$ so that
$\Lambda=\{g_w=z^2+w\}$,
it gives $$\frac{d}{dw}g_w^q(a_k)|_{w=c_1}\neq 0.$$ This recovers Douady-Hubbard's result \cite{DH1} that every primitive component of the Mandelbrot set has a simple cusp. The Main Theorem
 of the present paper, which can be found on page \pageref{thm:main},  is a far reaching generalization of this fundamental fact to suitable families of local maps.
The paper \cite{Le3} contains also a detailed historical account.

In turn, in Epstein's work \cite[Proposition 20]{Ep2} the above result is proven in the case when all $\kappa_i\in\overline\D\setminus\{0,1\}$ under the assumption that all critical points of $f$ are simple and $r$ is maximal (i.e., $r=\deg f-1$ for polynomials and $r=2\deg f-2$ for rational functions).
Epstein's work \cite{Ep2}, see also \cite{Ep3}, is a development after his earlier work \cite{Ep0}, \cite{Ep1}, is
 coordinate free and can also be used to prove transversality when there are critical relations. The techniques build on an approach pioneered by Thurston.
In Thurston's approach, when $f$ is a globally defined holomorphic map, $P$ is a finite $f$-forward invariant set containing
the postcritical set and the Thurston map
$\sigma_f\colon
\mbox{Teich}(\widehat{\C}\setminus P) \to \mbox{Teich}(\widehat{\C}\setminus P)$
% (\supset  \mbox{Teich}(\widehat{\C}\setminus f^{-1}(P)))$
 is defined by pulling back an almost holomorphic structure.
It turns out that $\sigma_f$ is contracting, see \cite[Corollary 3.4]{DH2}.
In  Thurston's result on the topological realisation of rational maps,  Douady \& Hubbard \cite{DH2}
use that the dual of the derivative of the Thurston map $\sigma_f$ is equal to the Thurston pushforward operator $f_*$ which acts on the space of quadratic differentials.
Epstein's approach is to exploit the  injectivity of  $f_*-id$.  One of the innovations in \cite{Ep2} is the
introduction of the space Teichm\"uller deformation space $\mbox{Def}^B_A(f)$ which
can be used in a wide range of settings.

%Proofs in all above-mentioned works heavily use that the maps are defined "globally".
%QQQ ELABORATE QQQ
%\marginpar{SS: expand on this}
%\marginpar{SS: too close to  intro  \cite{LSvS}}

\medskip
\def\MT{Milnor and \!Thurston}

%As mentioned, the purpose of Thurston's approach was to consider critically finite rational maps.
There were other approaches for critically finite rational maps: Sullivan's pull-back argument \cite{Su,McS,MS}. 
Milnor and Thurston's \cite{MT}  
and Tsujii's \cite{Tsu0,Tsu} work. 
Their initial purpose was to prove the monotonicity of entropy for the real quadratic family.
All these and previously discussed methods break down if the map is not globally defined, see \cite{LSvS1a} for details.
%In all above-mentioned approaches are "global", i.e., apply (by basically their nature) to globally-defined maps, see \cite{>>>>} for details.
%The purpose of this paper is to deal with families of maps
%which are not globally defined, like (\ref{examples}).
%For such families all the previous methods  break down, see \cite{????} for details.
In \cite{LSvS1,LSvS1a} we develop a method
which  goes back to  \MT's and
Tsujii's approaches in  allowing us to deal
with maps which are locally defined.
%, like (\ref{examples}).
%We benefit from Milnor \!\! \& \! \!\! \!Thurston's and Tsujii's approaches.
{\MT}   associated to the space of quadratic maps and
a given combinatorial  data on a periodic orbit $P\ni 0$ of $f_{c_1}$ of period $q$ where $f_{c}(z)\equiv z^2+c$,
a map
 which assigns to a $q$ tuple of points a new $q$ tuple of points, 
 $$F: (z_1,...z_q)\mapsto (\hat z_1,...,\hat z_q)$$
where $\hat z_q=0$ and
$f_{z_1}(\hat z_i)=z_{i+1 \!\!\! \mod q}$.
Since $F$ is many-valued, {\MT} considered a lift $\tilde{F}$ of this map to the universal cover and apply Teichm\"uller theory to show that $\tilde{F}$ is strictly contracting in the Teichmuller metric of the universal cover.

In \cite{LSvS1,LSvS1a} we bypass this 'global' approach
by rephrasing it locally.
This is done in the set-up of so-called  (locally defined) marked maps (and their local deformations)
and replacing $F$ by an (non-linear) operation which associates to a holomorphic motion $h_\lambda$ of a finite set $P$
another holomorphic motion $\hat h_\lambda$ of $P$ that we call its {\it lift}.
The {\it lifting property} is an assumption that sequences of successive lifts of holomorphic motions
are compact. In \cite{LSvS1}, \cite{LSvS1a} we considered the case that the set $P$ is finite and prove that assuming the lifting property
either some 'critical relation' persists along some non-trivial manifold in parameter space  or one has
transversality, i.e. the 'critical relation' unfolds transversally. In \cite{LSvS1}, \cite{LSvS1a} we also prove the lifting property for many interesting families of maps.
An important feature of our method is that for real maps we obtain {\em positive transversality}
originated in \cite{Tsu0,Tsu}:
some determinant is not only non-zero but has positive sign.

Here we  consider maps with a non-finite 'postcritical set' $P$, namely
locally defined maps with an attracting or a parabolic periodic orbit.
The results of this paper do not aim to replicate the results mentioned previously,
so we only consider the case of maps with a single attracting or parabolic periodic orbit.
The proof follows the same strategy as in \cite{LSvS1}, \cite{LSvS1a}.
Although we don't use the transfer operator which is defined in \cite{LSvS1}, \cite{LSvS1a}
as the linearisation of the lift operator (and is infinitely dimensional if $P$ is infinite),
the proof here has a strong 'flavor' of it, see the beginning of Sect \ref{sec:ThmA} and Appendix A.
As mentioned, the maps  we consider are allowed to  have essential singularities.

For other  results for transversality in the setting  of polynomial, rational or finite type maps
(which have at most a finite number of singular values), see %\marginpar{L25/12: two ref. added}
\cite{Astor,BE, D, DH1, FG,Le00,Le4, LSvS2,Str}.

\subsection{Organisation of this paper}
In  \S\ref{sec:results} we will state the results of this paper, in  \S\ref{sec:outline} we will give an outline
of the three steps used in the proof, which will be given in \S\ref{sec:ThmA}-\ref{sec:ThmC}.
%\ref{sec:ThmB}
Applications to complex and real families will be then given in \S\ref{sec:complex}-\ref{sec:realmaps}.

\medskip 

{\bf Acknowledgments.} We thank Mitsu Shishikura and Michel Zinsmeister for very fruitful discussions. We also
thank Alex Eremenko for inspiring us to write this paper, by  posing the question which we answer in Theorem~\ref{thm:eremenko}. We also would like to thank  the anonymous referee. The authors acknowledge Adam Epstein for pointing out an error/typo at the very end of the proof of Lemma 4.1 in the Arxiv version of this paper.
This work was started when GL  visited Imperial College (London) and continued during his visit at the Institute of Mathematics of PAN (Warsaw). He thanks both Institutions for their generous hospitality. GL acknowledges the support of ISF grant No. 1226/17, WS acknowledges the support of NSFC grant No. 11731003, and SvS acknowledges the support of ERC AdG Grant No. 339523 RGDD.

\section{Statement of results}\label{sec:results}

\subsection{Marked maps and holomorphic deformations}
Let $U$ an open subset of $\C$ and $g\colon U\to \C$ be holomorphic on $U$. Assume that $c_1\in U$.
Then we say that $g$ is a {\em marked map w.r.t. $c_1$} if
$c_n=g^{n-1}(c_1)$ is well-defined for all $n\ge 1$ and
$\overline P\subset U$ where $P=\{c_n\}_{n=1}^\infty$.
%%The map $g$ is {\it N-marked} if
%%
%%\begin{enumerate}
%%\item $Dg(z)\neq 0$ for all $z\in U$,
%%\item $g$ has a periodic orbit $N:=\{a_0,a_1,\cdots,a_{q-1}\}\subset U$ of minimal period $q$ which is parabolic and non-degenerate:
%%$Dg^q(a_0)=1$ and $D^2g^q(a_0)\neq 1$,
%%\item
%%$c_{qk+1}\to a_{0}$ as $k\to \infty$.
%%\end{enumerate}
%
%
%%\def\MM{\mathcal M}
%%A {\em marked map} is a map $g$ from the union of a finite set $P_0$ and an open set $U\subset \C$ mapping
%%into $\C$ such that
%%\begin{itemize}
%%\item there exists a finite set $P\supset P_0$ such that $g(P)\subset P$ and $P\setminus P_0\subset U$;
%%\item $g|U$ is holomorphic and $g'(x)\not=0$ for $x\in P\setminus P_0$.
%%\end{itemize}
%
%%\begin{remark}
%%So $P$ is a forward invariant set for $g$, and $g$ is only defined on a neighbourhood of
%%$P\setminus P_0$.  A  marked map $g$ does not need to be defined in a neighbourhood of $P_0$.
%% In applications, points in $P_0$ will be where   some extension of $g$ has a critical point, an (essential) singularity or even
%% where $g$ has a discontinuity. In this sense marked maps correspond to a generalisation of
%% the notion of critically finite maps. \end{remark}
%
%%Let $\{c_{0,1}, \dots,c_{0,\nu}\}$ denote the (distinct) points in $P_0$ and write
%%${\bf c}_0={\bf c}_0(g)=(c_{0,1},\dots ,c_{0,\nu})$
%%and
%%$${\bf c}_1={\bf c}_1(g)=(g(c_{0,1}),\dots, g(c_{0,\nu})):=(c_{1,1},c_{1,2},\ldots, c_{1,\nu}).$$
%%\noindent
We say that  $(g,G)_W$ is a  {\em local holomorphic deformation} of $g$ if:
\begin{enumerate}
\item $W$ is an open connected subset of $\C$ containing  $c_1$;
\item $G\colon (w,z)\in W\times U \mapsto G_w(z)\in  \C$ is a holomorphic map such that $G_{c_1}=g$;
\item $DG_w(z)\neq 0$ for all $z\in U$, $w\in W$.
%\item if, for some $w\in W$, $G_w:U\to \C$ has an attracting cycle $A_w=\{a_j(w)\}_{j=0}^{q-1}$ of exact period $q$ then it is unique, all iterates $G_w^n(w)$, $n\ge 0$, are well-defined and $G_w^{qk+j}(w)\to a_j(w)$ as $k\to \infty$.
\end{enumerate}
Here and later on $D=\frac{\partial}{\partial z}$, $D^k=\frac{\partial^k}{\partial z^k}$.
Note that  $c_1$ plays the role of a special point in the dynamical space $U$, but it is also a special point in the parameter space $W$.

\begin{example}
Let  $g \colon U\to \C$ be a marked map w.r.t. $c_1$.  Moreover, let $W=\C$ and
$G\colon W\times U \to \C$   be defined by $G(w,z)=G_w(z)=g(z)+(w-c_1)$. Then
$(g,G)_W$ is a local holomorphic deformation of $g$.
%$G_w(z)=g(z)+(w-c_1)$ where $g:U\colon U\to \C$ is a marked map w.r.t. $c_1$ and $W=\C$.  \marginpar{30/4 Changed}
%$g \colon U\setminus\{0\}\to V\setminus \{c_1\}$ is a holomorphic un-branched covering map, $U$, $V$ are \marginpar{30/4} open sets in $\C$ with $0\in\overline{U}$, and $W=\C$.
%and $c_1=\lim_{z\to 0}  g(z)\in U$.  \marginpar{SS}
 \end{example}

%Let $B(a,r)=\{w\in\C: |w-a|<r\}$, $\D_r=B(0,r)$.

\subsection{Holomorphic motions and the lifting property}\label{subsec:lifting}
Recall that $h_\lambda$ is a {\em holomorphic motion} of a set $X\subset \C$ over   $(\Lambda, 0)$
where $\Lambda$ is a domain in $\C$ which contains $0$, if
 $h_\lambda \colon X\to \C$ satisfies:
 (i)  $h_0(x)=x$, for all  $x\in X$,
 (ii)  $h_\lambda(x)\ne h_\lambda(y)$ whenever $x\ne y$ and $\lambda\in \Lambda$, $x,y\in X$ and
 (iii) $\Lambda \ni \lambda \mapsto h_\lambda(z)$ is holomorphic.
 Quite often we will consider the special case where $\Lambda$ is equal to $\D_r:=B(0,r)$
 where $B(a,r):=\{w\in\C: |w-a|<r\}$.

Let $K$ be a bounded  set such that $P\subset K\subset\overline{K}\subset U$  and $g(K)\subset K$.
The following notions of the {\it lift} and the {\it lifting property} were introduced and studied in \cite{LSvS1}, \cite{LSvS1a} (in the case of a finite set $K$):
%We need the following straightforward generalization:
\iffalse
\begin{definition}\label{def:liftingproperty}
Say that $(g,G)_W$ has the {\it lifting property for the set} $K$ if the following holds: Given a holomorphic motion $h_\lambda^{(0)}$ of $K$ over $(\Lambda, 0)$, where $\Lambda$ is a domain in $\C$ which contains $0$, there exist $\eps>0$ and holomorphic motions
$h_\lambda^{(n)}$, $n=1,2,,\ldots$
of $K$ over $(\D_{\eps}, 0)$ such that for each $n\ge 0$, \marginpar{$n\ge 0$ instead $k\ge 1$}
%(where $\Delta_{\eps}$ is the open $\epsilon$-neighbourhood of $\ast$ in $\Delta$)
\begin{enumerate}
\item $h_\lambda^{(n)}(c_1)\in W$ for each $\lambda \in \D_\eps$;
\item $h_\lambda^{(n+1)}$ is the lift of $h_\lambda^{(n)}$ over $(\D_\eps, 0)$
for $(g, G)_W$, that is,
$$G_{h_\lambda^n(c_1)}(h_\lambda^{(n+1)}(x))=h_\lambda^{(n)}(g(x)), x\in K$$
\item there exists $M>0$ such that $|h_\lambda^{(n)}(x)|\le M$ for all
$x\in K$, $\lambda\in \D_{\eps}$ and $n\ge 0$.
\end{enumerate}
\end{definition}
\fi

\begin{definition}\label{def:liftingproperty}
Given a holomorphic motion $h_\lambda$ of $K$ over $(\Lambda, 0)$,
%where $\Lambda$ is a domain in $\C$ which contains $0$,
we say that a holomorphic motion $\hat{h}_\lambda$ of $K$ over $(\Lambda_0, 0)$ where $\Lambda_0\subset \Lambda$,
%(which contains $0$)
is a {\em lift}  of $h_{\lambda}$ for $(g, G)_W$ if for all $\lambda\in \Lambda_0$ and $x\in K$,
\begin{equation}\label{liftdef}
G_{h_\lambda(c_1)}(\hat{h}_{\lambda}(x))=h_{\lambda}(g(x)).
\end{equation}
We say that $(g,G)_W$ has the {\it lifting property for the set}
$K$ if the following holds: Given a holomorphic motion $h_\lambda^{(0)}$ of $K$ over $(\Lambda, 0)$, there exist $\eps>0$ and holomorphic motions
$h_\lambda^{(n)}$, $n=1,2,,\ldots$
of $K$ over $(\D_{\eps}, 0)$ such that for each $n\ge 0$,
%(where $\Delta_{\eps}$ is the open $\epsilon$-neighbourhood of $\ast$ in $\Delta$)
\begin{enumerate}
\item $h_\lambda^{(n)}(c_1)\in W$ for each $\lambda \in \D_\eps$;
\item $h_\lambda^{(n+1)}$ is the lift of $h_\lambda^{(n)}$ over $(\D_\eps, 0)$
for $(g, G)_W$,
\item there exists $M>0$ such that $|h_\lambda^{(n)}(x)|\le M$ for all
$x\in K$, $\lambda\in \D_{\eps}$ and $n\ge 0$.
\end{enumerate}
\end{definition}
The next lemma and the subsequent remark clarify this notion:
\begin{lemma}  Given any holomorphic motion $h_\lambda$ of $K$ over $(\D_{\epsilon_0}, 0)$,
there is a sequence of holomorphic motions $h_\lambda^{(n)}$, $n\ge 0$, of $K$ where $h_\lambda^{(0)}=h_\lambda$ and $h_\lambda^{(n)}$ is the lift of $h_\lambda^{(n-1)}$ over $(\D_{\eps_n}, 0)$ with some maximal $\eps_n>0$.
\end{lemma}
\begin{proof}  It is enough to show that given a holomorphic motion $h_\lambda$ of $K$ over $(\D_{\epsilon_0}, 0)$, there exists $\epsilon_1>0$ (which depends on $\epsilon_0>0$ and $h_\lambda$) such that the lift $\hat{h}_\lambda$ of $h_\lambda$ for the set $K$ exists over $\D_{\epsilon_1}$.
    Indeed, as $\overline{K}$ is compact it follows from the definition of local holomorphic deformation that there are a finite open covering $\{B(x_j, r_j)\}$ of $K$ and $\epsilon_1>0$ such that, for $\lambda\in\D_{\epsilon_1}$ the following hold: $h_\lambda(c_1)\in W$, for each $j$ the map
    $G_{h_\lambda(c_1)}$ is injective on $B(x_j,2r_j)$ and $h_\lambda(g(y))\in G_{h_\lambda(c_1)}(B(x_j,2r_j))$ for $y\in B(x_j,r_j)$. Then define $\hat{h}_\lambda(y):=G_{h_\lambda(c_1)}^{-1}(h_\lambda(g(y))$ for $y\in B(x_j,r_j)$ where
    $G_{h_\lambda(c_1)}^{-1}$ is the inverse map to
     $G_{h_\lambda(c_1)}|_{B(x_j,2r_j)}$.
%Therefore, for each initial $h_\lambda$, there is a sequence of holomorphic motions $h_\lambda^{(n)}$, $n\ge 0$, of $K$ where $h_\lambda^{(0)}=h_\lambda$ and $h_\lambda^{(n)}$ is the lift of $h_\lambda^{(n-1)}$ over $(\D_{\eps_n}, 0)$ with some maximal $\eps_n>0$.
%The lifting property means that for any $h_\lambda$ there is $\epsilon>0$ such that $\eps_n>\eps$ for all $n$.
\end{proof}

\begin{remark}
%\begin{itemize}
%\item
The lifting property means that for any holomorphic motion $h_\lambda$ of $K$ over $(\Lambda, 0)$, there is $\epsilon>0$ such that in the previous lemma $\eps_n>\eps$ holds for all $n\ge 0$ and such that the family $\{h_\lambda^{(n)}(x)\}$ is uniformly founded on $\D_{\eps}\times K$.
\end{remark}
\begin{remark}
 Note that if $K_1\subset K_2$ are two forward invariant sets
then the lifting property for $K_2$ implies the lifting property for $K_1$ (this follows from Slodkowski's generalised lambda lemma \cite{Slod} and also \cite{AIM}).
\end{remark}

In~\cite{LSvS1,LSvS1a}, we studied the case when  the {\lq}postcritical set{\rq} $P$ is finite and showed that
if this lifting property holds then, under suitable circumstances,  critical relations unfold transversally.
The present paper considers the case where $P$ is an  infinite orbit
converging to an attracting or neutral periodic orbit and the main result shows that,  again,
the lifting property implies transversality.

\iffalse
GGGGG moved after Main Th GGGGGGGG

In this paper we will not check whether a particular family of maps satisfies the lifting property, but
refer to the results proved in the   second part of \cite{LSvS1,LSvS1a} where the following is shown:

\begin{lemma} The above lifting property holds for families of the form
\begin{itemize}
\item $f_c(z)=f(z)+c$, $c\in \C$ with $f\in \mathcal F$ defined in \S\ref{subsec:additive}.
\item $f_w(z)=wf(z)$, with $f\in \mathcal E \cup \mathcal E_0$ defined in \S\ref{subsec:multiplicative}.
\end{itemize}
\end{lemma}
The above classes of families
include  families of maps with an essential singularity, such as $G_w(z)= b e^{-1/|z|^\ell} +w$.
A  lifting property also holds within the spaces of rational and polynomial maps, see \cite[Appendix C]{LSvS1a}.

ggggggggggggggggggggggggggg
\fi

\medskip

 \subsection{The statement of Main Theorem}
In this paper we will study marked maps $g$ such that $P=\{c_n\}_{n=1}^\infty$ is an {\em infinite} orbit of $g$ so that
$c_n$ converges to a periodic orbit
$\mathcal{O}=\{a_0, a_1, \ldots, a_{q-1}\}$ and consider a holomorphic deformation $(g,G)_W$ of $g$.
The main theorem of this paper shows that if $(g,G)_W$  satisfies  the lifting property, then
the orbit $\mathcal{O}$ depends {\lq}transversally{\rq} on the parameter $w$ (in a sense made precise below).

As usual, we say that $\mathcal{O}$ is {\em hyperbolic attracting} if $\kappa:=Dg^q(a_0)\in \D\setminus \{0\}$. We say that $\mathcal{O}$ is {\em non-degenerate parabolic} if there exists $l,\m \in \Z$, $\m\ge 1$, $(l,\m)=1$ such that $\kappa=e^{2\pi i l/\m}$ and
$D^{\m+1} g^{\m q} (a_0)\not=0$.  Let
\begin{equation}\label{eqn:Q}
Q(z):=\left.\frac{d}{dw} G_w^q(z)\right|_{w=c_1},
\end{equation}
which is a holomorphic function defined in a neighborhood of $\mathcal{O}\cup P$.

\begin{example}\label{ex:Q}
If $\kappa=1$ then non-degeneracy means that $D^2g^q(a_0)\ne 0$.
If, moreover,  $Q(a_0)\ne 0$ then
the Taylor expansion of $G_w^q(z)$ at $z=a_0, w=c_1$ is given by $G_w^q(z)=z+\alpha (z-a_0)^2 + \beta (w-c_1) + h.o.t.$ with $\alpha,\beta\ne 0$. Our  Main Theorem states that a certain lifting property implies that either $Q(a_0)\ne 0$ or that the parabolic periodic point persists for all parameters $w$ near $c_1$.
\end{example}

We will also use the following subset of the basin of $\mathcal{O}$:
%
%For $r>0$,
%let
\begin{equation}\label{eqn:Omegar}
\Omega_r:=\left\{z\in U: \sup_{n=0}^\infty \dist(f^n(z),\mathcal{O})\le r, \text{ and }
\lim_{n\to\infty} \dist(f^n(z), \mathcal{O})=0\right\} \mbox{ where } r>0.
%\in \bigcup_{j=0}^{q-1} \overline{B(a_j,r)}, n=0,1,\ldots\right\}.
\end{equation}
In the attracting case, $\bigcup B(a_j,\rho)\subset\Omega_r\subset\bigcup \overline{B(a_j,r)}$
whenever $r$ is small enough and some even smaller $\rho>0$. In the parabolic case, the Leau-Fatou Flower Theorem (see Lemma~\ref{lem:Omega} in Appendix B)   tells us that $(\bigcup B(a_j,\rho))\setminus \Omega_r$,  even though non-empty, is located in a very thin region near the repelling directions. %\marginpar{link to \\ Leau-Fatou \\  Lemma}

\label{thm:main}
\begin{mainthm} Assume that $\mathcal{O}$ is either hyperbolic attracting or non-degenerate parabolic and that $(g,G)_W$ satisfies the lifting property for $P_{r_0}=P\cup \Omega_{r_0}$
for some $r_0>0$. Then one has
\begin{enumerate}
\item {\em either } the following {\em transversality} property:
\begin{equation}\label{eqn:Qtran}
D^2g^q(a_0) Q(a_0)\not= Q'(a_0)(\kappa -1);
\end{equation}
\item {\em or persistency of periodic points with the same multiplier holds:} there is a neighborhood $W_1$ of $c_1$ and holomorphic functions $a_j(w)$ defined in $W_1$ with $a_j(c_1)=a_j$ such that for each $w\in W_1$, $G_w^q(a_j(w))=a_j(w)$ and $DG_w^q(a_0(w))=\kappa$ is constant.
\end{enumerate}
\end{mainthm}
In this paper we will not check whether a particular family of maps satisfies the lifting property, but
refer to the results proved in the   second part of \cite{LSvS1,LSvS1a} where the following is shown (with obvious changes in the proof):
\begin{lemma}
%The above lifting property holds for $(g,G)_W$ of the form
Let $f_w$, $U$ be one of the following.
\begin{itemize}
\item $f_w(z)=f(z)+w$, $w\in \C$ and %$U=D\setminus f^{-1}(0)$ 
$U=D\setminus\{0\}$ with $f\in \mathcal F$ defined in \S\ref{subsec:additive},
\item $f_w(z)=wf(z)$, $w\in\C\setminus\{0\}$, with $f\in \mathcal E\cup \mathcal E_0$ defined in \S\ref{subsec:multiplicative} and $U=D\setminus f^{-1}(1)$ for $f\in \mathcal E$,
$U=D\setminus \pm f^{-1}(1)$ for $f\in \mathcal E_0$.
%\item $f_c(z)=f(z)+c1$, $c\in \C$ with $f\in \mathcal F$ defined in \S\ref{subsec:additive}.
%\item $f_w(z)=wf(z)$, with $f\in \mathcal E \cup \mathcal E_0$ defined in \S\ref{subsec:multiplicative}.
\end{itemize}
Let $c_1$ be attracted to either hyperbolic attracting or parabolic periodic orbit of $f_{c_1}$ and $W_1$ is a neighborhood of $c_1$.
Then $g=f_{c_1}|_U$ is a marked map, $G_w=f_w|_U$, $w\in W_1$ is a holomorphic deformation of $g$
and $(g,G)_W$ satisfies the lifting property for any $P_{r_0}\subset U$.
\end{lemma}
The above classes of families
include  families of maps with an essential singularity, such as $G_w(z)= b e^{-1/|z|^\ell} +w$ for real non-zero $z$.
A  lifting property also holds within the spaces of rational and polynomial maps, see \cite[Appendix C]{LSvS1a}.

\medskip
%The next lemma clarifies the transversality condition~(\ref{eqn:Qtran}).

\subsection{Clarifying the transversality condition~(\ref{eqn:Qtran}) and the non-degeneracy condition}
\begin{lemma}\label{lem2.2}
\begin{enumerate}
\item If   $\kappa=1$ then (\ref{eqn:Qtran}) implies
 \begin{equation}Q(a_j)\not=0\mbox{ for all }j=0,1,\dots,q-1. \end{equation}
\item If $\kappa\not = 1$ then there exists a holomorphic function $w\mapsto a_0(w)$
for $w$ near $c_1$ so that
$a_0(w)$ is a fixed point of $G_w^q$ for all $w$ near $c_1$ and so that  $a_0(c_1)=a_0$. Defining $\kappa(w)=DG_w^q(a_0(w))$
%to be the multiplier of this  periodic point,
we have that
 (\ref{eqn:Qtran}) implies
  \begin{equation}\label{eqn:kappa'}
    \kappa'(c_1)=\frac{D^2g^q(a_0) Q(a_0)- Q'(a_0)(\kappa-1)}{1-\kappa}\ne 0 .
    \end{equation}
\end{enumerate}
\end{lemma}
\begin{proof}
 If $\kappa=1$, then  (\ref{eqn:Qtran}) is reduced to $Q(a_0)\not=0$, which is equivalent to $Q(a_j)\not=0$ for all $j=0,1,\ldots, q-1$. If
 $\kappa\not=1$ then  by the Implicit Function Theorem, there exists holomorphic functions $a_j(w)$, defined near $c_1$ such that $a_j(c_1)=a_j$ and  $G_w(a_j(w))=a_{j+1}(w)$ for all $0\le j<q$, where $a_q(w)=a_0(w)$. Let $\kappa(w)=DG_w^q(a_0(w))$.
To see that (\ref{eqn:kappa'}) holds,  let $\mathcal{G}(w,z)=G_w^q(z)$. Then
$$\mathcal{G}(w, a_j(w))=a_j(w), \dfrac{\partial \mathcal{G}}{\partial z} (w, a_j(w))=\kappa(w),$$
for each $j$.
Differentiating and evaluating at $w=c_1$,  we obtain
$$Q(a_j)=(1-Dg^q(a_j)) a_j'(c_1), Q'(a_j)+D^2g^q(a_j)a_j'(c_1)=\kappa'(c_1).$$
Thus the equality in (\ref{eqn:kappa'}) holds. The inequality in (\ref{eqn:kappa'}) is equivalent to  (\ref{eqn:Qtran}).
\end{proof}

\begin{remark} In the parabolic case, the non-degeneracy condition is necessary as shown by the following example. Let $G_w(z)= w\sin z$. Choose $a_0\in (\pi/2, 3\pi/2)$ so that $\tan a_0=-a_0$ and let $w_0=1/\cos a_0$, $g=G_{w_0}$. Then $\mathcal{O}=\{a_0,-a_0\}$ is a cycle of $g$ of period $2$ with $g'(a_0)=g'(a_1)=1$. This parabolic cycle attracts both critical values $w_0$ and $-w_0$ of $g$ and is degenerate. On the other hand,
$$Q(a_0)=\left.\dfrac{d}{dw} G_w^2(a_0)\right|_{w=w_0}=\sin (-a_0)+Dg(-a_0)\sin (a_0)
=0.$$
Note that at the parameter $w_0=1/\cos(a_0)\approx-2.26 $ the family of maps $G_w(x)=w\sin(x)$, $w\in \R$ undergoes
a pitchfork bifurcation, where the attracting period two orbit of this map for $w\in (w_0,w_0+\epsilon)$ becomes
for $w\in (w_0-\epsilon,w_0)$ a repelling two orbit and splits-off two new periodic orbits, both of which
are attracting,  see Figure~\ref{fig1} in the last section.
\end{remark}

\subsection{Applications to transversality within additive complex families}
\label{subsec:additive}
Let us start by  complex families of the form $f_c(z)=f(z)+c$, $c\in \C$.
Let $\mathcal{F}$ denote the collection of holomorphic maps $f: D\to V$, where
\begin{enumerate}
\item [(1)] $D$ is a bounded open set in $\C$ with $0\in \overline{D}$;
\item [(2)] $V$ is a bounded open set in $\C$;
\item [(3)] $f: D\setminus\{0\}\to V\setminus \{0\}$ is an un-branched covering;
\item [(4)] The following separation property holds:  $V\supset B(0;\diam(D)) \supset D$.
\end{enumerate}

\begin{theorem}\label{thm:Main2}
Let $f\in \mathcal{F}$ and let $G_w(z)= f(z)+w$. Suppose that $c_1\in\overline{D}$ is such that $g=G_{c_1}$ has an attracting or parabolic cycle $\mathcal{O}=\{a_0,\cdots,a_{q-1}\}$ with multiplier $\kappa\not=0$. Then $c_n=g^{n-1}(c_1)$ is well defined and converges to $\mathcal{O}$ as $n\to\infty$ and the following transversality holds:
\begin{equation} \kappa'(c_1)\ne 0\mbox{ if } \kappa\ne 1
\,\, \mbox{ and } \,\, Q(a_j)\ne 0\mbox{ for } a_j\in \mathcal{O}\mbox{ if } \kappa=1.\label{eq:trans} \end{equation}
\iffalse
If $f$ is real and $c_1\in \overline{D}\cap\R$,
%\mathcal{O}\subset \R$
then the following holds:
\begin{enumerate}
\item[(a)] if $\kappa(c_1)\in [-1,1)$ then  $\kappa'(c_1)>0$ if $f|D\cap \R$ has a local minimum at
$0$ and $\kappa'(c_1)<0$ if $f|D\cap \R$ has a local maximum at $0$;
\item[(b)] if $\kappa(c_1)=1$ and $c_{qk+1}\to a_0$ as $k\to\infty$, then $Q(a_0)>0$.
\end{enumerate}
\fi
\end{theorem}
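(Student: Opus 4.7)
The plan is to apply the Main Theorem to the family $(g, G_w)_W$ with $W$ a bounded neighborhood of $c_1$ in $\C$ and $U$ a small open neighborhood of $\overline P \cup \Omega_{r_0}$ inside $D\setminus\{0\}$. Three ingredients need to be put in place: (a) $c_n$ is well defined with $c_n \to \mathcal{O}$; (b) $(g, G)_W$ is a local holomorphic deformation; (c) the lifting property holds for $K := P \cup \Omega_{r_0}$.

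For (a) and (b), condition (3) together with $0 \in \overline D$ lets $f$ extend holomorphically across $0$ with $f(0) = 0$, so $0$ is the unique critical point of $g$ and $c_1 = g(0)$ its critical value. Then (3)--(4) make $g$ polynomial-like of degree $d = \deg(f|_{D\setminus\{0\}})$, so by Fatou's theorem $0$ lies in the basin of the non-repelling cycle $\mathcal{O}$ (note $\kappa \ne 0 \Rightarrow 0 \notin \mathcal{O}$), which gives $c_n = g^n(0) \to \mathcal{O}$. Moreover $c_n \ne c_1$ for $n \ge 2$ (else $c_1$ would be periodic and lie in $\mathcal{O}$, violating $\kappa \ne 0$), so $P$ is an infinite orbit. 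With $U$ as above and $W$ any bounded connected neighborhood of $c_1$, $DG_w = f' \ne 0$ on $U$, which verifies (b).

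Step (c) is the core technical point. Writing the lift equation for $G_w = f(z) + w$ gives
\[
f\bigl(h_\lambda^{(n+1)}(x)\bigr) = h_\lambda^{(n)}(g(x)) - h_\lambda^{(n)}(c_1).
\]
For $x \in K$, both $g(x)$ and $c_1$ lie in $K \subset \overline D$, so at $\lambda = 0$ their difference sits in $\overline{B(0; \diam D)}$, which by (4) is a compact subset of $V$. This difference is moreover nonzero on $K$ once $r_0$ is chosen small enough that $\Omega_{r_0} \cap g^{-1}(c_1) = \emptyset$ (possible as $\mathcal{O} \cap g^{-1}(c_1) = \emptyset$ by $\kappa \ne 0$) and one uses $c_n \ne c_1$ for $n \ge 2$ to deal with $x \in P$. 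For $|\lambda| < \eps$ small, quasi-conformal continuity of holomorphic motions keeps the right-hand side uniformly in $V \setminus \{0\}$ in both $n$ and $x$. Because $f : D\setminus\{0\} \to V\setminus\{0\}$ is an unbranched cover and $\D_\eps$ is simply connected, the unique branch of $f^{-1}$ extending the identity at $\lambda = 0$ defines $h_\lambda^{(n+1)}(x) \in D\setminus\{0\}$. Condition (3) of Definition~\ref{def:liftingproperty} is automatic from the boundedness of $D$, with $M = \sup_{z \in \overline D} |z|$.

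The Main Theorem now yields either (\ref{eqn:Qtran}), which by the Lemma above is equivalent to the desired (\ref{eq:trans}), or persistency of the $q$-cycle with constant multiplier $\kappa(w) \equiv \kappa$ on a neighborhood of $c_1$; the last step is to exclude this persistency alternative for the translation family, which is the main obstacle since a priori nothing in the form $f(z) + w$ forbids a persistent non-repelling cycle. The plan here is to invoke the Douady--Hubbard straightening theorem: since $g$ is polynomial-like with a non-repelling cycle, $g$ is hybrid equivalent to a degree-$d$ polynomial and the family $\{G_w\}$ forms a Mandelbrot-like family in the DH sense; on its hyperbolic (resp.\ parabolic) components, the multiplier of the marked attracting (resp.\ parabolic) cycle is classically non-constant in the parameter, being a proper map onto $\D$ in the hyperbolic case and non-constant near parabolic boundary points by the parabolic-implosion picture. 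This contradicts $\kappa(w) \equiv \kappa$ and, via the Lemma, completes the proof.
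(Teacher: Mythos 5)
Your reduction to polynomial-like maps is where the argument breaks. Membership in $\mathcal{F}$ only requires that $f\colon D\setminus\{0\}\to V\setminus\{0\}$ be an unbranched covering with $0\in\overline D$; nothing forces this covering to have finite degree, and $0$ may be a boundary point at which $f$ has an essential singularity. The paper's own example $f(x)=b\,e^{-1/|x|^\ell}$ (with $D=U^-\cup U^+$, $0\in\partial D$, and $f_0\colon D\to\D_R\setminus\{0\}$ an \emph{infinite-degree} unbranched covering) is exactly such a map and is listed among the applications of Theorem~\ref{thm:Main2}. For such $f$ there is no holomorphic extension across $0$, $g=f+c_1$ is not polynomial-like, and the entire Douady--Hubbard apparatus you invoke — straightening, Mandelbrot-like families, properness of the multiplier on hyperbolic components, the parabolic-implosion picture — is unavailable. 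Since you use this apparatus both to place $c_1$ in the basin of $\mathcal{O}$ and, crucially, to rule out the persistency alternative of the Main Theorem, the proposal fails on a substantial part of $\mathcal{F}$ (and even on the finite-degree part, verifying the DH axioms and the non-constancy of the multiplier for the family $\{f+w\}$ is a nontrivial task of essentially the same strength as the transversality being proved). The paper proceeds differently: Proposition~\ref{prop:c1toa0} proves $c_n\to\mathcal{O}$ by a covering-space pull-back (Fatou) argument — pulling back a Koenigs/Fatou coordinate domain $A_0$ along $g$, which is an unbranched covering over any connected open set avoiding $\{0,c_1\}$ thanks to the separation property (4), and showing that if no pull-back met $c_1$ the linearizing coordinate would extend univalently to all of $\C$ — an argument valid for arbitrary (possibly infinite) degree. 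A second omission: the Main Theorem requires the parabolic cycle to be \emph{non-degenerate}, and you never check this; in the paper it is precisely part (2) of Proposition~\ref{prop:c1toa0} (each cycle of attracting petals must capture the unique singular orbit).

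Two smaller points of comparison. Your direct verification of the lifting property is in the spirit of what the paper delegates to \cite{LSvS1,LSvS1a} and is acceptable as a sketch, except that (4) only places the \emph{open} ball $B(0,\diam(D))$ inside $V$, so "$\overline{B(0,\diam D)}$ is a compact subset of $V$" is not what (4) gives; at $\lambda=0$ the relevant quantity is $h^{(n)}_0(g(x))-h^{(n)}_0(c_1)=f(x)\in f(K)$, a compact subset of $V\setminus\{0\}$, and one perturbs from there. Finally, note that the paper's proof of Theorems~\ref{thm:Main2}--\ref{thm:Main3} makes no use of straightening at all: it consists of Proposition~\ref{prop:c1toa0}, the lifting property quoted from \cite{LSvS1,LSvS1a}, and an application of the Main Theorem, so your long detour through Mandelbrot-like families is both not what the paper does and not available at the stated level of generality.
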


\begin{example} The conclusion of the above theorem applies for example for the following families:
\begin{itemize}
\item $f_c(z)=z^d+c$, $c\in \C$ and where $d\ge 2$ is an integer.
\item $f_c(x)= b e^{-1/|x|^\ell} +c$ where
$\ell\ge 1$, $b> 2(e\ell)^{1/\ell}$ are fixed and $c\in D$.
Here $D=U^-\cup U^+$, $U^-=-U^+$, $U^\pm$ are disjoint topological disks symmetric w.r.t. the real axis and $\{0\}=\overline{U^+}\cap \overline{U^-}$. Furthermore,
there is $R>1$ such that $f_0:D\to \D_R\setminus \{0\}$ is an unbranched covering, $D\subset \D_R$, and
$U^+\cap \R\supset (0,\beta]$ where $\beta>0$ is so that the map $f_{-\beta}$ has the Chebysheb combinatorics: $f_{-\beta}(\beta)=\beta$.
\end{itemize}
\end{example}

\begin{remark} When $G_c$ is a real family, the sign of $\kappa'$ and $Q(a_0)$ is given in Section~\ref{sec:realmaps}, see also
Appendix A.
\end{remark}
\begin{remark}\label{quadr}
For  the quadratic family $G_c(z)=z^2+c$  the inequalities in (\ref{eq:trans})  were already known.
The inequality $\kappa'(c_1)\ne 0$ for $c_1$ so that $G_{c_1}$ has a hyperbolic attracting periodic point
was established in  \cite{DH1}.  When $c_1$ is real and $G_{c_1}$ has either a hyperbolic attracting or a parabolic periodic point with multiplier $+1$, the signs for $\kappa'(c_1)$ and $Q(a_0)$
were also already known, see for example
\cite[Lemma 4.5]{Milnor}.
\end{remark}

\iffalse
\begin{remark} In Proposition~\label{prop:signkappa'} the meaning of $Q(a_0)>0$ is clarified as follows.
If $f|D\cap \R$ has a local maximum then $Q(a_0)>0$  implies that there exists $\epsilon>0$
so that  there exists no fixed point of $G_w^q$ near $a_0$ for $w\in (c_1-\epsilon,c_1)$.
If $f|D\cap \R$ has a local mimimum at $0$, then the same conclusion holds for $w\in (c_1,c_1+\epsilon)$.
\end{remark}
\fi

\subsection{Applications to transversality within multiplicative complex families}
\label{subsec:multiplicative}
To state our next theorem, we say that $v$ is an {\em asymptotic value} of a holomorphic map $f\colon D\to \C$
if there exists a path $\gamma\colon [0,1)\to D$ so that $\gamma(t)\to \partial D$ and $f(\gamma(t))\to v$ as $t\uparrow 1$.
We say that $v$ is a {\em singular value} if it is a critical value or an asymptotic value.
Let us next consider families of the form $f_w (z)=w f(z)$ where
$f: D\to V$ is a holomorphic map  such that:
\begin{enumerate}
\item[(a)] $D,V$ are open sets which are symmetric w.r.t. the real line so that
$f(D)=V$;  %and $f\colon D\to V$ has no singular values in $V\setminus \{0,1\}$.
\item[(b)] Let $I=D\cap \R$ then there exists $c>0$ so that $I\cup \{c\}$ is a (bounded or unbounded) open interval and $0\in \overline I$, $c\in \mbox{int}(\overline{I})$.
Moreover,   $f$ extends continuously to $\overline{I}$ so that  $f(I)\subset   \R$ and $\lim_{z\in D, z\to 0} f(z)=0$.
%Let $I=D\cap (0,+\infty)$ then there exists $c>0$ so that $I\cup \{c\}$ is a (bounded or unbounded) %open\marginpar{L25/12:changes in\\ def.of $I$} interval and $0\in \partial I$, $c\in %\mbox{int}(\overline{I})$.
%Moreover,   $f$ extends continuously to $I\cup\{0,c\}$ so that  $f(I)\subset   \R$ and $\lim_{z\in D, %z\to 0} f(z)=0$.
\item[(c)]   Let $D_+$ be the component of $D$ which contains $I\cap (c,\infty)$, where $D_+$ might be equal to $D$.
Then $u\in D\setminus \{0\}$ and $v\in D_+\setminus \{0\}$, $v\ne u$,  implies  $u/v\in V$. %\marginpar{added $v\ne 0$}
\end{enumerate}

\noindent
Let $\mathcal{E}$ be the class of maps which satisfy $(a)$,$(b)$,$(c)$ and assumption $(d)$:
\begin{enumerate}
\item[(d)]   $f\colon D\to V$ has no singular values in $V\setminus \{0,1\}$
and $c>0$ is minimal such that $f$ has a positive local maximum at $c$ and $f(c)=1$.
\end{enumerate}
\noindent
Similarly let  $\mathcal{E}_o$ be the class of maps which satisfy $(a)$,$(b)$,$(c)$ and assumption $(e)$:
\begin{enumerate}
\item[(e)] $f$ is odd,   $f\colon D\to V$ has no singular values in $V\setminus \{0,\pm 1\}$
  and $c>0$ is minimal such that $f$ has a positive local maximum at $c$ and $f(c)=1$.
\end{enumerate}
%These classes were introduced in  \cite{LSvS1,LSvS1a} and include maps for which $V$ or $D$ are bounded sets.
The class $\mathcal{F}$ was introduced in \cite{LSvS1} and classes $\mathcal{E}$, $\mathcal{E}_o$
in \cite{LSvS1,LSvS1a} and include maps for which $V$ or $D$ are bounded sets.

\begin{theorem}\label{thm:Main3}
Let  $f: D\to V$ be holomorphic map from $\mathcal{E}\cup \mathcal{E}_o$ %\marginpar{L:changes!}
and define $g=c_1\cdot f\colon D \to c_1\cdot V$ where we assume that $c_1\in D_+\setminus \{0\}$. %\marginpar{changed \\ formulation}
Assume that $g$ has a hyperbolic attracting or a parabolic cycle $\mathcal{O}=\{a_0,\cdots,a_{q-1}\}\subset D\setminus\{0\}$ with multiplier $\kappa$.
Take $G_w(z)=w f(z)$ where $w\in W:=D_+\setminus \{0\}$. %\marginpar{L: $D_+$}
Then at $w=c_1$, one of the following holds:
\begin{itemize}
 %for
% where the classes of maps $\mathcal{E}$, $\mathcal{E}_o$ are introduced in \cite{LSvS1,LSvS1a}.
%$G_w(z)=w f(z)$ where $w\in W:=D\setminus \{0\}$.
%Then if  %for $b=c_1\in W$,  $a$ is a hyperbolic respectively a parabolic periodic point of
%$g(z)=b f(z)$ has a hyperbolic or a parabolic periodic point for some $b =c_1\in W$ with multiplier $\kappa$  then %with multiplier
\item  transversality holds: $$ \kappa'(c_1)\ne 0\mbox{ if } \kappa\ne 1
\,\, \mbox{ and } \,\, Q(a_j)\ne 0\mbox{ for } a_j\in \mathcal{O}\mbox{ if } \kappa=1,$$
\item $f\in \mathcal{E}_o$ and $\mathcal{O}$ is symmetric with respect to the origin.
\end{itemize}
\iffalse
If $c_1\in I$ is positive and $\mathcal{O}\subset  I\cap \R^+$,
%\marginpar{Changes}
 then the following holds:
 if $\kappa(c_1)\in [-1,1)$ then  $\kappa'(c_1)<0$ and if $\kappa(c_1)=1$ then $Q(a_0)>0$.
\fi
\end{theorem}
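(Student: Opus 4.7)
The plan is to reduce Theorem~\ref{thm:Main3} to an application of the Main Theorem for the family $G_w(z)=w f(z)$ with parameter $w\in W=D_+\setminus\{0\}$ and $g=G_{c_1}=c_1\cdot f$. The hypothesis to check is the lifting property of $(g,G)_W$ for the set $P_{r_0}=P\cup\Omega_{r_0}$, where $P=\{c_n\}_{n\ge 1}$ and $r_0>0$ is small enough that Lemma~\ref{lem:Omega} gives sharp control of the dynamics near the cycle $\mathcal{O}$. Once this lifting property is in hand, the Main Theorem yields the alternative: either transversality (\ref{eqn:Qtran}), which by the clarifying lemma following the Main Theorem is equivalent to the stated conclusion ($\kappa'(c_1)\ne 0$ when $\kappa\ne 1$, and $Q(a_j)\ne 0$ for every $a_j\in\mathcal{O}$ when $\kappa=1$); or persistency of $\mathcal{O}$ as $\{a_j(w)\}_{j=0}^{q-1}$ with constant multiplier $\kappa$.

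For the lifting property I would build on the construction from \cite{LSvS1a}, where it was established for finite prefixes $\{c_1,\ldots,c_n\}\subset P$ for the families $G_w=wf$ with $f\in\mathcal{E}\cup\mathcal{E}_o$: the separation property (c) together with the covering property of $f$ on $D\setminus\{0\}$ (resp.\ $D\setminus\{0\}\to V\setminus\{0,\pm 1\}$) provides a Koebe-type univalence control on the lifts $h_\lambda^{(n)}$ and a uniform choice of $\varepsilon$. To extend to the infinite set $P_{r_0}$ I would exploit the fact that the orbit eventually enters $\Omega_{r_0}$. In the hyperbolic case, the Koenigs linearization at each $a_j$ provides a local holomorphic conjugacy of $G_w^q$ to multiplication by $\kappa(w)$ in which iterated lifting near $\mathcal{O}$ is trivial; in the parabolic case the Fatou coordinates on the attracting petals covering $\Omega_{r_0}$ play the analogous role, while the ``bad'' sectors $\cC_j(\tau)$ near the repelling directions are kept out of $\Omega_{r_0}$ by Lemma~\ref{lem:Omega}(1). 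Slodkowski's $\lambda$-lemma then glues these local motions near $\mathcal{O}$ to the ``exterior'' motions on $P$ from \cite{LSvS1a}, keeping the uniform bound (3) of Definition~\ref{def:liftingproperty}.

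To finish I would rule out the persistency alternative unless $f\in\mathcal{E}_o$ and $\mathcal{O}=-\mathcal{O}$. The singular values of $G_w$ are $\{0,w\}$ if $f\in\mathcal{E}$ and $\{0,\pm w\}$ if $f\in\mathcal{E}_o$; since $0$ is a fixed point with multiplier $wf'(0)$ varying with $w$, it cannot be the attracted singular value, so on the open set $W_1\ni c_1$ produced by persistency the cycle $\{a_j(w)\}$ must attract $w$ (or $\pm w$ in the odd case). Differentiating $G_w^q(a_0(w))=a_0(w)$ and $DG_w^q(a_0(w))\equiv\kappa$ in $w$, and combining with this constraint on the singular orbit, yields rigid algebraic identities on $f$. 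In the class $\mathcal{E}$ no nontrivial self-symmetry of the family is available and the resulting identities are inconsistent with the normalization $f(c)=1$, by a rigidity argument parallel to the one used for critical relations in \cite{LSvS1a}. In the class $\mathcal{E}_o$ the involution $\sigma(z)=-z$ commutes with every $G_w$ because $f$ is odd, and the same identities admit a solution precisely when $\mathcal{O}$ is $\sigma$-invariant; the example $G_w(z)=w\sin z$ in the Remark after the Main Theorem shows that this exceptional case genuinely occurs.

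The step I expect to be the main obstacle is the lifting property near a parabolic cycle: Fatou coordinates are defined only on attracting petals, whereas $\Omega_{r_0}$ is a larger set containing transitional regions between petals, so extending the holomorphic motion of $P$ across $\Omega_{r_0}$ while keeping the uniform bound $M$ of Definition~\ref{def:liftingproperty}(3) requires a careful gluing together with a controlled choice of parameter radius $\varepsilon$. Once this technical point is handled, ruling out persistency outside the symmetric odd case is a rigidity statement analogous to the one in \cite{LSvS1a} and should go through with only minor modifications.
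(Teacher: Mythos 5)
Your reduction to the Main Theorem is the right idea, but you have skipped the step that carries most of the content of Theorem~\ref{thm:Main3} and that produces its exceptional second bullet. The Main Theorem does not apply under the hypotheses of Theorem~\ref{thm:Main3} as stated: one must first show that $g$ is a marked map w.r.t.\ $c_1$ whose orbit $P=\{c_n\}$ is well defined and converges to $\mathcal{O}$ (otherwise the set $P_{r_0}$ you want the lifting property for does not even exist), and, in the parabolic case, that $\mathcal{O}$ is \emph{non-degenerate}. Your proposal takes both for granted and discusses only the lifting property. In the paper these facts are exactly the content of Proposition~\ref{prop:c1toa0}: a Fatou-type pullback argument (iterated preimages of a Koenigs disk or attracting petal must eventually meet the singular set $\{c_1\}$, resp.\ $\{\pm c_1\}$, since otherwise the linearizing/Fatou coordinate would extend univalently to all of $\C$) shows that $c_1$ (or $-c_1$) lies in the immediate basin, and that every attracting petal of a parabolic $\mathcal{O}$ captures a singular orbit, whence non-degeneracy. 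It is precisely here that the hypothesis $\mathcal{O}\ne-\mathcal{O}$ is used when $f\in\mathcal{E}_o$, and this is where the second bullet of the theorem comes from: it marks the case in which these hypotheses of the Main Theorem cannot be verified, not an alternative delivered by the Main Theorem.

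This also means your final step misreads the logical structure and, as written, is not a proof. You identify the second bullet with the persistency alternative of the Main Theorem and propose to exclude persistency otherwise by ``rigid algebraic identities'' inconsistent with the normalization $f(c)=1$; but differentiating $G_w^q(a_j(w))=a_j(w)$ and $DG_w^q(a_0(w))\equiv\kappa$ merely reproduces the identities in the lemma following the Main Theorem (equality in (\ref{eqn:Qtran})), which are not self-contradictory, and no rigidity argument is actually supplied. Moreover the $w\sin z$ example does not exhibit persistency of a constant-multiplier cycle: there the symmetric cycle is a \emph{degenerate} parabolic, so the Main Theorem is simply not applicable and $Q(a_0)=0$; hence your claimed equivalence ``persistency occurs precisely when $\mathcal{O}=-\mathcal{O}$'' fails. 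In the paper the exceptional case is split off \emph{before} invoking the Main Theorem, and in the remaining cases the lifting property for $P_{r_0}$ is quoted from \cite{LSvS1,LSvS1a} (lifts are produced by pulling holomorphic motions back through the covering structure of $G_w$, using the separation/condition (c) for the bounds in Definition~\ref{def:liftingproperty}); your Koenigs/Fatou-coordinate gluing is not the mechanism used and is in any case only sketched.
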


%\begin{coro}
%The conclusion of the previous theorem applies to families of the form
%\begin{itemize}
%\item $f_b(z)=b z(1-z)$, $b\in \C\setminus \{0\}$,
%\item $f_b (z)=b \exp(z) (1-\exp(z))$, $b\in \C\setminus \{0\}$,
%\item $f_b (z)=b [\sin(z)]^2$, $b\in \C\setminus \{0\}$,
% \item $f_b(z)=bf(z)$ where $f$ is the unimodal map $f\colon [0,1]\to [0,1]$ defined by %\marginpar{added}
%$$f(x)=\exp(2^\ell) \left( -\exp(-1/|x-1/2|^\ell) + \exp(-2^\ell) \right)$$
%satisfying $f(0)=f(1)=0$, $f(1/2)=1$  which has a
%flat critical point at $c=1/2$. Here $\ell\ge \ell_0$ where $\ell_0$ is chosen sufficiently large.
%This implies that $f$ has an extension $f\colon D\to V$ which is in  $\mathcal E_0$, where
%%\marginpar{S: changed formulation}
% $V$ is a punctured bounded disc and $D$ consists of two components $D_-\cup D_+$, see  \cite[Lemma 4.4]{LSvS1}.
% Here we assume that the parameter $b\in D_+$.
%\end{itemize}
%\end{coro}
%
%GG

\begin{example} The conclusion of the previous theorem applies for example to the following families:
%{Examples of families where the conclusion of Theorems~\ref{thm:Main2}-\ref{thm:Main3} holds.}
%
%%\begin{example}
%%The conclusion of the previous Theorems~\ref{thm:Main2}-\ref{thm:Main3} applies for example to families of the form
%\begin{itemize}
%\item $f_c(z)=z^d+c$, $c\in \C$,
%\item $f_c(x)= b e^{-1/|x|^\ell} +c$ where
%$\ell\ge 1$, $b> 2(e\ell)^{1/\ell}$ are fixed and $c\in D$.
%Here $D=U^-\cup U^+$, $U^-=-U^+$, $U^\pm$ are disjoint topological disks symmetric w.r.t. the real axis and ${0}=\overline{U^+}\cap \overline{U^-}$. Furthermore,
%there is $R>1$ such that $f_0:D\to \D_R\setminus \{0\}$ is an unbranched covering, $D\subset \D_R$, and
%$U^+\cap \R\supset (0,\beta]$ where $\beta>0$ is so that the map $f_{-\beta}$ has the Chebysheb combinatorics: $f_{-\beta}(\beta)=\beta$.
%%\marginpar{L: not readable} \marginpar{S: changed formulation}
%\end{itemize}
%Then the preIf $g=f_c$ has an attracting hyperbolic or parabolic periodic point with mulitplier $+1$, then the above inequalities hold.
\begin{itemize}
\item $f_b(z)=b z(1-z)$, $b\in \C\setminus \{0\}$,
\item $f_b (z)=b \exp(z) (1-\exp(z))$, $b\in \C\setminus \{0\}$,
\item $f_b (z)=b [\sin(z)]^2$, $b\in \C\setminus \{0\}$,
\item $f_b (z)=b \sin(z)$, $b\in \C\setminus \{0\}$,
 \item $f_b(z)=bf(z)$ where $f$ is the unimodal map $f\colon [0,1]\to [0,1]$ defined by %\marginpar{added}
$$f(x)=\exp(2^\ell) \left( -\exp(-1/|x-1/2|^\ell) + \exp(-2^\ell) \right)$$
satisfying $f(0)=f(1)=0$, $f(1/2)=1$  which has a
flat critical point at $c=1/2$. Here $\ell\ge \ell_0$ where $\ell_0$ is chosen sufficiently large.
This implies that $f$ has an extension $f\colon D\to V$ which is in  $\mathcal E_0$, where
 $V$ is a punctured bounded disc and $D$ consists of two components $D_-\cup D_+$.
 Here we assume that the parameter $b\in D_+$.
\end{itemize}
\end{example}

%gggggggggggggggggggg

\begin{remark}
The classes $\mathcal{E}$ and $\mathcal{E}_o$ both contain maps for which the set of singular values has infinite cardinality.
\end{remark}

\subsection{Periodic points do not disappear after they are born}
For real maps, additional arguments allow us to obtain the sign of $\kappa'(c_1)$ and $Q(a_0)$.

Each $f\in \mathcal{E}\cup \mathcal{E}_o$ defines naturally a unimodal map $f: J:=(0, b)\to\R$ where
$$b=\sup\{b'\in I: b'>0 \text{ and }f(x)>0\mbox{ for }x\in(0,b')\}.$$ We denote by $\mathcal{E}_u$ and $\mathcal{E}_{o,u}$ the collection of unimodal maps obtained in this way from maps in $\mathcal{E}$ and $\mathcal{E}_o$ respectively. Recall that $c$ is the turning point of $f$ in $J$.

%We also consider a sub-collection $\mathcal{F}_u$ of $\mathcal{F}$ consisting of maps $f:D\to V$ with the following properties:
We denote by $\mathcal{F}_u^+$ (resp. $\mathcal{F}_u^-$) the collection of $C^1$ unimodal maps $f: J\to \R$, where $J\ni 0$ is an open interval, such that $f|J\setminus \{0\}$ allows an extension to a map $F:D\to V$ in $\mathcal{F}$ with $J\setminus \{0\}=(D\cap \R)\setminus \{0\}$ and such that $f$ has a maximum (resp. minimum) at $0$. Put $c=\{0\}$.

\begin{theorem}\label{thm:eremenko}
Consider a family of unimodal maps $f_t$ satisfying one of the following:
\begin{enumerate}
\item[(i)] $f_t=f+t$ with $f\in \mathcal{F}_u^+$ and $t\in J$;
\item[(ii)] $f_t= t\cdot f$ with $f\in \mathcal{E}_u\cup \mathcal{E}_{o,u}$ and $t\in J$.
\end{enumerate}
Suppose $f_{t_*}$, $t_*>c$, has a period cycle $\mathcal{O}$, then for any $t\in J$ with $t\ge t_*$, $f_t$ has a periodic cycle $\mathcal{O}_t$ of the same period such that $\mathcal{O}_t$ depends on $t$ continuously and $\mathcal{O}_{t_*}=\mathcal{O}$.

Similarly for
\begin{enumerate}
\item [(iii)] $f_t=f+t$ with $f\in\mathcal{F}_u^-$ and $t\in J$,
\end{enumerate}
if $f_{t_*}$, $t_*<c$, has a period cycle $\mathcal{O}$, then for any $t\in J$ with $t\le t_*$, $f_t$ has a periodic cycle $\mathcal{O}_t$ of the same period such that $\mathcal{O}_t$ depends on $t$ continuously and $\mathcal{O}_{t_*}=\mathcal{O}$.
\end{theorem}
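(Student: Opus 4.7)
The plan is to run a continuation argument on the parameter $t$ and to rule out the only nontrivial obstruction (a parabolic collision with multiplier $+1$) using Theorems~\ref{thm:Main2}, \ref{thm:Main3} combined with the sign information of Section~\ref{sec:realmaps} and the Appendix. I will spell out case~(i); cases~(ii) and~(iii) differ only in which of Theorems~\ref{thm:Main2}, \ref{thm:Main3} is invoked and in the orientation of $t$. Let $p$ denote the period of $\mathcal{O}=\{a_0,\ldots,a_{p-1}\}$ and set
\[
T=\{s\in[t_*,\sup J):\text{there is a continuous choice }\mathcal{O}_t\text{ of period-}p\text{ cycles of }f_t,\ t\in[t_*,s],\text{ with }\mathcal{O}_{t_*}=\mathcal{O}\}.
\]
Compactness of orbits inside the polynomial-like domain of $f$ shows $T$ is relatively closed in $[t_*,\sup J)$: as $t\to t_1:=\sup T$, the branches $a_j(t)$ accumulate onto an $f_{t_1}$-periodic set of period dividing $p$, and any collapse to a strictly smaller period is itself a period doubling of the smaller cycle, which by the same argument below only creates (never destroys) the period-$p$ cycle as $t$ moves in the stated direction. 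So either $t_1=\sup J$ and we are done, or $\mathcal{O}_{t_1}$ is a genuine period-$p$ cycle of $f_{t_1}$ with multiplier $\kappa_1$.

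If $\kappa_1\neq 1$, the Implicit Function Theorem applied to $f_t^p(x)-x=0$ at $(t_1,a_0)$ extends $\mathcal{O}_t$ smoothly past $t_1$, contradicting $t_1=\sup T$; in particular $\kappa_1=-1$ is just period doubling and the period-$p$ cycle itself persists. The delicate case is $\kappa_1=+1$. To apply Theorem~\ref{thm:Main2} to $G_w(z)=f(z)+w$ at $w=t_1$ I must check $c_n\to\mathcal{O}_{t_1}$, and this is supplied by the classical Fatou theorem for the polynomial-like extension $f_{t_1}\colon D\to V$ in $\mathcal{F}$: the immediate basin of the parabolic cycle must contain a singular value of $f_{t_1}$, and the only non-fixed singular value is the critical value $c_1(t_1)=f_{t_1}(0)$. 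Theorem~\ref{thm:Main2} now yields a dichotomy: either persistency of a period-$p$ cycle with multiplier $1$ in a whole neighborhood of $t_1$ (which trivially extends $\mathcal{O}_t$ past $t_1$), or real transversality $Q(a_0)\neq 0$.

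In the transversality case the local normal form
\[
f_t^p(x)-x=(t-t_1)\,Q(a_0)+\tfrac12(f_{t_1}^p)''(a_0)(x-a_0)^2+\text{higher order}
\]
is a simple saddle-node, with real fixed points near $a_0$ existing on exactly one side of $t_1$, determined by the sign of $-Q(a_0)/(f_{t_1}^p)''(a_0)$. The sign analysis of Section~\ref{sec:realmaps} and the Appendix, based on the unimodal geometry of $f$ (local maximum at $0$, chain-rule bookkeeping through the $p$-step itinerary) pins this sign down and forces the two fixed-point branches to exist for $t>t_1$. But by hypothesis $\mathcal{O}_t$ is present for $t\in[t_*,t_1)$, i.e., for $t<t_1$, which is incompatible with a saddle-node of this orientation. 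Hence $\kappa_1=+1$ in the transversality alternative cannot occur either, so $T=[t_*,\sup J)$. Case~(ii) is identical with Theorem~\ref{thm:Main3} replacing Theorem~\ref{thm:Main2} for $G_w(z)=wf(z)$ (Fatou is applied to the singular values $\{0,t_1f(c)\}=\{0,t_1\}$ of $G_{t_1}$, where $0$ is a fixed point). Case~(iii) is the mirror image: $f$ has a minimum at $0$, the sign of $Q(a_0)$ flips, and "birth as $t$ increases" becomes "birth as $t$ decreases", so the same argument delivers persistence for $t\leq t_*$.

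The main obstacle I expect is the sign bookkeeping at the parabolic point: extracting the sign of $Q(a_0)(f_{t_1}^p)''(a_0)$ from the unimodal geometry and aligning it with the "birth as $t$ increases" orientation of Section~\ref{sec:realmaps} and the Appendix is essentially a chain-rule exercise across $p$ iterates, and is the only step in which the specific sign conventions of the family and the direction of parameter flow need careful tracking. The rest (Implicit Function Theorem off the parabolic locus, Fatou to verify the hypothesis of the Main Theorem, and the dichotomy of Theorems~\ref{thm:Main2}-\ref{thm:Main3}) is standard once that bookkeeping is settled.
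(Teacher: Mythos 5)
Your skeleton (continue the cycle in $t$; the only obstruction is a multiplier\,$+1$ collision, killed by transversality together with the signs $Q(a_0)>0$ and $D^2f_{t_1}^p(a_0)<0$) is the same in spirit as the paper's, but there is a genuine gap at the limit parameter $t_1$: you assume, after a one-clause dismissal, that the limit of $\mathcal{O}_t$ is a \emph{genuine period-$p$ cycle}. The dangerous scenario is the collapse of $\mathcal{O}_t$ onto a cycle of strictly smaller period $q'$ dividing $p$; an implicit-function-theorem argument shows the limit cycle then has real multiplier $\lambda$ with $\lambda^{p/q'}=1$, i.e.\ $\lambda=\pm1$, so one must exclude (a) a \emph{reverse period doubling} (the period-$p$ orbit merging into a period-$p/2$ orbit whose multiplier crosses $-1$ as $t$ increases) and (b) absorption of the period-$p$ orbit at a lower-period multiplier-$+1$ parabolic. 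Neither is ``the same argument below'': your saddle-node analysis concerns a genuine period-$p$ parabolic, and the \emph{orientation} of a period doubling is a sign statement about $\kappa'$ at multiplier $-1$ which follows neither from Theorems~\ref{thm:Main2}--\ref{thm:Main3} (they only give $\kappa'\ne0$) nor from the Appendix (which treats only multiplier $+1$). This is exactly where the paper invests its work: Proposition~\ref{prop:kappa=0} (sign of $\kappa$ on either side of a superattracting parameter, imported from the positive transversality of \cite{LSvS1,LSvS1a}), Lemma~\ref{lem:attrwin} (the multiplier is strictly decreasing on every attracting window), Proposition~\ref{prop:pd} (at a multiplier\,$-1$ parabolic, $f_t^{2Nq'}$ has exactly three nearby fixed points, two attracting and one repelling, on \emph{both} sides), and the final case split according to whether $\kappa(t)>1$ or $\kappa(t)<1$ along the continuation: for $\kappa>1$ the two possible degenerate limits are excluded by Propositions~\ref{prop:saddlenode} and~\ref{prop:pd}, while for $\kappa<1$ the approach $\kappa(t)\nearrow1$ (which is precisely what a reverse period doubling would look like from the attracting side) is excluded by the monotonicity of Lemma~\ref{lem:attrwin}. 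Without an argument of this kind your set $T$ need not be closed and the continuation can stop.

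A secondary point: you describe the sign determination at the multiplier\,$+1$ collision as ``chain-rule bookkeeping through the $p$-step itinerary''. It is not: the sign of $Q(a_0)$ is not accessible by a local computation; in the paper it comes either indirectly from Lemma~\ref{lem:attrwin} (whose proof rests on the superattracting positive transversality of \cite{LSvS1,LSvS1a}), or from the transfer-operator positivity $D(\rho)>0$ of the Appendix (Propositions~\ref{prop:ge0} and~\ref{prop:32}, which require the lifting property). Citing these is legitimate, and combined with $Q(a_0)\neq0$ from Theorems~\ref{thm:Main2}--\ref{thm:Main3} and with $D^2f_{t_1}^p(a_0)<0$ from Proposition~\ref{prop:realc12a0} they do give the saddle-node orientation you want; but they are substantive inputs rather than a chain-rule exercise, and, as explained above, they do not by themselves dispose of the multiplier\,$-1$/period-collapse obstruction. (Minor point: Theorem~\ref{thm:Main2} is not stated as a dichotomy, it asserts transversality outright; the dichotomy appears only in Theorem~\ref{thm:Main3} for odd $f$ with symmetric cycles.)
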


%This research was  supported by ERC AdG Grant No. 339523 RGDD and NSFC grant No. 11731003 and ISF grant No. 1226/17.
%

\section{Outline of the proof of the Main Theorems} \label{sec:outline}
The setting in this paper (and it's purpose) is similar to that in \cite{LSvS1}, \cite{LSvS1a}
except there the case where the postcritical set is
finite is considered. % Therefore we would like
Here we will  follow the same strategy in the proof as in that paper. So let us recall the
main steps in the proof of Theorem 2.1 of \cite{LSvS1} or the Main Theorem of \cite{LSvS1a}
%,{LSvS1a}
:

\begin{enumerate}[leftmargin=8mm]
\item[(A)]  Assume that transversality fails.
%As we know, the vector field $v=v_1: P\to \C$ is bounded.
Then there exists a holomorphic motion $h_\lambda$ of $P$ over $(\D_\epsilon,0)$ with the speed $v$ at $\lambda=0$:
$$\frac{dh_\lambda(c_n)}{d\lambda}\bv_{\lambda=0}=v(c_n) =\frac{d}{dw}\left. 
G_w^{n-1}(w)\right |_{w=c_1} \mbox{ for $n=1,2,\cdots$.}$$
\item[(B)]  Let $h^{(0)}_\lambda=h_\lambda$ and $h^{(k+1)}_\lambda$ be the lift of $h^{(k)}_\lambda$ for
$k=0,1,2,\cdots$. By the lifting property, all holomorphic motions $h^{(k)}_\lambda$ of $P$ are defined over $(\D_{\hat\epsilon},0)$ for some $\hat\epsilon>0$ and are uniformly bounded.
Moreover, by (A), all $h^{(k)}_\lambda$ are asymptotically invariant of order $m=1$, i.e.,
$$h^{(k+1)}_\lambda(c_n)-h^{(k)}_\lambda(c_n)=O(\lambda^{m+1})$$
with $m=1$. Consider averages
\begin{equation}
\hat h_\lambda^{(k)}(z)= \dfrac{1}{k} \sum_{i=0}^{k-1} h_\lambda^{(i)}(z),\label{eq:av} \end{equation}
$k=1,2,\cdots$ and let $\hat h_\lambda$ be a limit map of $\hat h_\lambda^{(k)}$ along a subsequence.
Then:

\item[(B1)] $\hat h_\lambda$ is again a holomorphic motion of $P$ over (perhaps, smaller) disk,

\item[(B2)] $\hat h_\lambda$ is asymptotically invariant of order $m+1=2$.

\item[(B3)]
 Repeating the procedure, we find that for every $m=1,2,\dots$ there is a holomorphic motion which is asymptotically invariant of order $m$.

 \item[(C)]  Finally, the (B3) yields that the {\lq}critical relation{\rq}
 %that $G_w^q(a_0)=a_0$, $DG_w^q(a_0)=1$ for $w=c_1$
 persists  for all $w$ in a manifold containing $c_1$ of dimension $>0$.
\end{enumerate}
\medskip

When $P$ is a finite set, steps (A) and (B1) are straightforward. In the current set-up this can be
also made to work, as  is shown in this paper, but sometimes with considerable technical efforts. Moreover, we need to require the lifting property to be satisfied not only on the postcritical set $P$ but also on a bigger set which is a local basin of attraction of either hyperbolic or parabolic cycle.

\begin{definition} A holomorphic motion $h_\lambda$ of $P_{r_0}:=P\cup \Omega_{r_0}$
 over $(\D_\eps,0)$ is called {\em admissible} if for each $\lambda\in \D_\eps$, $z\mapsto h_\lambda(z)$ is holomorphic in the interior of $\Omega_{r_0}$. It is called {\em asymptotically invariant of order $m$} if for each $z\in P$,
$$\widehat{h}_\lambda(z)- h_\lambda(z)=O(\lambda^{m+1}) \text{ as } \lambda\to 0$$
where $\widehat{h}_\lambda$ is the lift of $h_\lambda$.
\end{definition}
\medskip
If $h_\lambda$  is asymptotically invariant of order $m$, then: (1) its lift $\widehat{h}_\lambda$ is asymptotically invariant of order $m$, too, and (2) $G_{h_\lambda(c_1)}(h_\lambda(x))=h_{\lambda}(g(x))+O(\lambda^{m+1})$, $x\in P$. See \cite{LSvS1a} for details.

\newtheorem*{ThmA}{Theorem A}
\newtheorem*{ThmB}{Theorem B}
\newtheorem*{ThmC}{Theorem C}

The proof of the Main Theorem is broken into the following three steps.
\begin{ThmA}
Assume transversality fails, so assume equality holds in (\ref{eqn:Qtran}).
Then there exists an admissible holomorphic motion $H_\lambda$ of the set $P_{r_0}$ over $(\D_\epsilon,0)$ for some $\epsilon>0$ such that
$$\frac{dH_\lambda}{d\lambda}\bv_{\lambda=0}(c_n)=v(c_n), \ n=1,2,\cdots,$$
where $v(c_n)=\left.\frac{d}{dw}G_w^{n-1}(w)\right|_{w=c_1}.$
In particular, the holomorphic motion is asymptotically invariant of order $1$.
\end{ThmA}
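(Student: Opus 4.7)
The plan is to construct $H_\lambda$ in two pieces: on $\Omega_{r_0}^o$ via a holomorphic vector field $\xi$, namely $H_\lambda(z):=z+\lambda\,\xi(z)$, and on the finite set $P\setminus\Omega_{r_0}$ by orbit-following, $H_\lambda(c_n):=G_{c_1+\lambda}^{n-1}(c_1+\lambda)$. The central task is to produce $\xi$. Setting $\Phi(z):=\partial_w G_w(z)|_{w=c_1}$, the chain rule applied to $c_n(w)=G_w^{n-1}(w)$ yields
\[
v(g(z))=Dg(z)\,v(z)+\Phi(z)\quad\text{on }P,
\]
so I seek $\xi$ holomorphic on $\Omega_{r_0}^o$ satisfying the cohomological equation $\xi\circ g-(Dg)\,\xi=\Phi$ and agreeing with $v$ at every orbit point $c_n\in\Omega_{r_0}^o$.

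In the hyperbolic case I would solve this equation near each $a_r$ using the Koenigs linearisation $\phi$ of $g^q$: the iterated equation $\xi\circ g^q-(Dg^q)\,\xi=Q$ becomes, in the coordinate $\zeta=\phi(z)$,
\[
\widetilde\xi(\kappa\zeta)-\kappa\,\widetilde\xi(\zeta)=\widetilde Q(\kappa\zeta).
\]
Expanding in power series, the coefficient equation at order $n$ reads $(\kappa^n-\kappa)\widetilde\xi_n=\kappa^n\widetilde Q_n$, which determines $\widetilde\xi_n$ uniquely for every $n\neq 1$ and at $n=1$ yields one solvability obstruction with one free parameter. A direct computation (essentially the one carried out in the lemma preceding~\eqref{eqn:kappa'}) shows this obstruction is exactly $D^2g^q(a_0)Q(a_0)-Q'(a_0)(\kappa-1)=0$, i.e.\ the assumed failure of transversality; so $\widetilde\xi$ exists, and the free parameter can be fixed by demanding $\xi(c_N)=v(c_N)$ at a single large $N$, which via the shared recurrence propagates $\xi(c_n)=v(c_n)$ to every $n\ge N$. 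The piece of $\xi$ near $a_0$ is then transported to neighborhoods of $a_1,\ldots,a_{q-1}$ (the other components of $\Omega_{r_0}^o$) using the non-iterated equation $\xi\circ g=(Dg)\,\xi+\Phi$.

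The parabolic case is handled by the same scheme with Fatou coordinates replacing Koenigs on the attracting petals that make up $\Omega_{r_0}$ (as in Lemma~\ref{lem:leau-fatou}(2)); the equation in Fatou charts takes the finite-difference form $\widetilde\xi(\zeta+1)-\widetilde\xi(\zeta)=\widetilde R(\zeta)$, and the leading-order solvability at $\zeta\to\infty$ (corresponding to $z\to a_r$) is once again equivalent to~\eqref{eqn:Qtran}. In particular when $\kappa=1$ the transversality failure reduces to $Q(a_0)=0$, and hence $Q\equiv 0$ on $\mathcal O$ by the cyclic relation $Q(a_{r+1})=Dg(a_r)Q(a_r)$ derived from $G_w\circ G_w^q=G_w^q\circ G_w$; this eliminates the constant-at-infinity term. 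Once $\xi$ is in hand, $H_\lambda(z):=z+\lambda\,\xi(z)$ is holomorphic jointly in $(z,\lambda)$, reduces to the identity at $\lambda=0$, is injective in $z$ for $|\lambda|$ small, and is admissible by construction; while on $P\setminus\Omega_{r_0}$ the orbit-following definition has velocity $v(c_n)$ by definition and agrees to first order in $\lambda$ with the $\Omega_{r_0}^o$-definition on any overlap. The main obstacle is precisely the parabolic step: the Fatou-coordinate analysis produces in principle a tower of potential solvability obstructions at each successive power of $1/\zeta$ at infinity, and reducing this tower to the single condition~\eqref{eqn:Qtran} requires both the non-degeneracy $D^{p+1}g^{pq}(a_0)\neq 0$ and the specific form of $Q=\partial_w G_w^q|_{w=c_1}$; a subsidiary point is the gluing of the local pieces on different petals and different $a_r$ into a single holomorphic vector field on $\Omega_{r_0}^o$.
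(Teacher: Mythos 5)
Your hyperbolic argument is essentially the paper's: the paper also solves the cohomological equation $w\circ g^q=Q+Dg^q\cdot w$ near $a_0$ via the Koenigs linearisation (Lemma~\ref{lem:coho}), identifies the unique resonant obstruction with the equality case of (\ref{eqn:Qtran}), normalises by prescribing the value at one orbit point $c_N$, and transports along the dynamics; the only differences (your coefficient-by-coefficient solution instead of the summed series, and orbit-following on the finite part of $P$ instead of an arbitrary extension followed by Lemma~\ref{lem:improvemotion}) are cosmetic, though you should still verify injectivity of the glued motion and Lipschitz control of $\xi$ up to $\partial\Omega_{r_0}$, which the paper sidesteps by invoking Lemma~\ref{lem:improvemotion}.

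The parabolic half, however, has a genuine gap, and it is exactly the point you flag yourself. You propose to solve $\widetilde\xi(\zeta+1)-\widetilde\xi(\zeta)=\widetilde R(\zeta)$ holomorphically in Fatou coordinates and assert that the ``tower'' of obstructions at successive powers of $1/\zeta$ collapses to the single condition (\ref{eqn:Qtran}); this is not proved, and it is the crux. Note that when $\kappa=1$ the forcing transported to the Fatou chart grows (roughly like $\zeta$ even after using $Q|_{\mathcal O}=0$), so the naive summation diverges and one must perform polynomial corrections; even then, what one obtains is an asymptotic/formal solution, and there is no reason a holomorphic solution with bounded difference quotients exists on all of $\Omega_{r_0}^o$ up to the cusp at $a_j$ — yet such control is needed for $z\mapsto z+\lambda\xi(z)$ to be injective on $P_{r_0}$ and for the motion to be admissible. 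You also do not treat the root-of-unity case $\kappa=e^{2\pi il/p}$, $p>1$, where (\ref{eqn:Qtran}) is $\kappa'(c_1)=0$ and the petals are permuted. The paper deliberately avoids this route: in Lemma~\ref{prop:vC1} it only extends $v$ from the orbit to a $C^1$ vector field $V$ with $\overline{\partial}V(a_j)=0$ (polynomial correction in Step 1, a H\"older estimate along the orbit in Step 2, and a $C^1$ straightening in Step 3), then applies the Measurable Riemann Mapping Theorem to get a holomorphic motion with speed $V$ on $P$, and finally obtains admissibility (holomorphy on $\Omega_{r_0}^o$) not from the vector field but from the lifting property via Lemma~\ref{lem:improvemotion}, using that the dilatation is small near $\mathcal{O}$. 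To complete your approach you would have to either carry out the full Fatou-coordinate analysis with growth and injectivity estimates near the cusp, or switch to the paper's quasiconformal mechanism; as written, the parabolic case is a plan rather than a proof.
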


\begin{ThmB} For any $m\ge 1$, if there is an admissible holomorphic motion $h_{\lambda}$ of the set $P_{r_0}$ over some $\D_{\eps}$ which is asymptotically invariant of order $m$, then there is an admissible holomorphic motion $\widetilde{h}_{\lambda}$ of the set $P_{r_0}$ over some $\D_{\eps'}$ which is asymptotically invariant of order $m+1$ such that
$$\widetilde{h}_\lambda(z)-h_\lambda(z)=O(\lambda^{m+1})\text{ as } \lambda\to 0, \text{ for any } z\in P.$$
\end{ThmB}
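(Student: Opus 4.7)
My plan follows the averaging strategy sketched in Section~\ref{sec:outline}: set $h^{(0)}_\lambda = h_\lambda$, inductively define $h^{(k+1)}_\lambda$ as the lift of $h^{(k)}_\lambda$, form the averages $\widehat h^{(k)}_\lambda = \frac{1}{k}\sum_{i=0}^{k-1} h^{(i)}_\lambda$, and take $\widetilde h_\lambda$ to be a subsequential Montel limit. The lifting property for $P_{r_0}$ places all the $h^{(k)}_\lambda$ on a common disk $\D_{\hat\eps}$ with a uniform bound $M$; since $DG_w$ never vanishes on $U$, one has $h^{(k+1)}_\lambda(z) = G^{-1}_{h^{(k)}_\lambda(c_1)}(h^{(k)}_\lambda(g(z)))$, so admissibility (holomorphy of $z\mapsto h^{(k)}_\lambda(z)$ on $\Omega_{r_0}^o$) propagates through the iterates.

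The first technical step is to upgrade the single hypothesis $h^{(1)}_\lambda - h^{(0)}_\lambda = O(\lambda^{m+1})$ to a $k$-uniform estimate $|h^{(k)}_\lambda - h^{(0)}_\lambda| \le C|\lambda|^{m+1}$ on a slightly smaller disk. Subtracting the defining identities at levels $k$ and $k-1$ and Taylor-expanding $G_w(y)$ around $(c_1,z)$ yields
\[
g'(z)\bigl(h^{(k+1)}_\lambda - h^{(k)}_\lambda\bigr)(z) = \bigl(h^{(k)}_\lambda - h^{(k-1)}_\lambda\bigr)(g(z)) - \partial_w G_w(z)\big|_{w=c_1}\bigl(h^{(k)}_\lambda - h^{(k-1)}_\lambda\bigr)(c_1) + O(\lambda^{m+2}),
\]
which propagates order $m{+}1$ inductively because $g'\ne 0$ on $U$. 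Applying the maximum principle to $\lambda^{-(m+1)}(h^{(k)}_\lambda - h^{(0)}_\lambda)$, which is uniformly bounded on $\D_{\hat\eps}$ by $2M$, then yields the claimed $k$-uniform bound. With this in hand, Montel's theorem gives a subsequence $\widehat h^{(k_j)}_\lambda$ converging locally uniformly on $\D_{\hat\eps}\times \Omega_{r_0}^o$ to an admissible limit $\widetilde h_\lambda$ which is identity at $\lambda=0$ and satisfies $\widetilde h_\lambda - h_\lambda = O(\lambda^{m+1})$ on $P$. Injectivity on some smaller disk $\D_{\eps'}$ follows from $\partial_z \widehat h^{(k)}_\lambda = 1 + O(\lambda)$ uniformly in $k$, by Cauchy estimates on $h^{(i)}_\lambda(z) - z$.

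The crux is to verify $\mathcal{L}\widetilde h_\lambda - \widetilde h_\lambda = O(\lambda^{m+2})$ on $P$, where $\mathcal{L}$ denotes the lift operator. Writing $h^{(i)}_\lambda = \mathrm{id} + \phi^{(i)}_\lambda$ and expanding the lift as $\mathcal{L}(\mathrm{id}+\phi) = \mathrm{id} + T\phi + R(\phi)$ with $T$ linear and $R(\phi) = O(|\phi|^2)$, the recursion is $\phi^{(i+1)} = T\phi^{(i)} + R(\phi^{(i)})$. A short computation, using linearity of $T$, gives
\[
\mathcal{L}\widehat h^{(k)}_\lambda - \widehat h^{(k)}_\lambda = \frac{1}{k}\bigl(\phi^{(k)}_\lambda - \phi^{(0)}_\lambda\bigr) + \Bigl[R\bigl(\widehat\phi^{(k)}_\lambda\bigr) - \frac{1}{k}\sum_{i=0}^{k-1} R(\phi^{(i)}_\lambda)\Bigr].
\]
The first summand is $O(|\lambda|^{m+1}/k) \to 0$. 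For the bracketed term, the previous step gives $\phi^{(i)}_\lambda - \phi^{(0)}_\lambda = O(\lambda^{m+1})$ (and similarly for $\widehat\phi^{(k)}_\lambda$), while $\phi^{(0)}_\lambda = O(\lambda)$; the quadratic nature of $R$ then yields $R(\phi^{(i)}_\lambda) = R(\phi^{(0)}_\lambda) + O(\lambda\cdot \lambda^{m+1}) + O(\lambda^{2(m+1)}) = R(\phi^{(0)}_\lambda) + O(\lambda^{m+2})$ uniformly in $i,k$, so the bracket is $O(\lambda^{m+2})$. Passing to the subsequential limit we obtain $|\mathcal{L}\widetilde h_\lambda - \widetilde h_\lambda| \le C'|\lambda|^{m+2}$ on $\D_{\eps'}$, equivalent to $O(\lambda^{m+2})$ in the Taylor sense.

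The principal obstacle I anticipate is executing the decomposition $\mathcal{L}(\mathrm{id}+\phi) = \mathrm{id} + T\phi + R(\phi)$ with $R$ genuinely quadratic and with constants uniform in $k$ and in $z\in P$; this requires jointly expanding $G^{-1}_w$ in the two small parameters $\phi_\lambda(c_1)$ and $\phi_\lambda(g(z))$ and carefully tracking the remainder. A secondary complication, absent from the finite-$P$ setting of \cite{LSvS1}, is that $P_{r_0}$ contains the open set $\Omega_{r_0}^o$, so both Montel compactness and the propagation of admissibility under averaging require the holomorphic structure provided by the lifting hypothesis on all of $P_{r_0}$ rather than merely on the orbit $P$.
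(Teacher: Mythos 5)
Your averaging skeleton and the promotion of asymptotic invariance coincide with the paper's route (the promotion step is exactly Proposition~\ref{mtom+1}, quoted from \cite{LSvS1}; your expansion $\mathcal{L}(\mathrm{id}+\phi)=\mathrm{id}+T\phi+R(\phi)$ is essentially a re-derivation of it), and in the attracting case your injectivity argument is sound: there $a_j$ lies in the interior of $\Omega_{r_0}$, so the Schwarz bound $|h^{(i)}_\lambda-\mathrm{id}|=O(\lambda)$ together with Cauchy estimates on slightly smaller balls gives $\partial_z h^{(i)}_\lambda=1+O(\lambda)$ uniformly in $i$, much as the paper obtains from Bers--Royden plus Koebe. (You should still, as the paper does, extend the resulting motion of a smaller set arbitrarily to $P_{r_0}$ and invoke Lemma~\ref{lem:improvemotion} to restore admissibility of a motion of the full set, but that is routine.)

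The genuine gap is the parabolic case, which is precisely where Theorem B goes beyond the finite-$P$ setting. The step ``injectivity on some smaller disk follows from $\partial_z\widehat h^{(k)}_\lambda=1+O(\lambda)$ uniformly in $k$, by Cauchy estimates on $h^{(i)}_\lambda(z)-z$'' fails there: admissibility only gives holomorphy of $z\mapsto h^{(i)}_\lambda(z)$ on $\Omega_{r_0}^o$, and in the parabolic case the points you must separate (the tail of $P$, and the attracting cusps $\cC'_j(\tau)$ needed for Lemma~\ref{lem:improvemotion}) accumulate at $a_j\in\partial\Omega_{r_0}^o$. A Cauchy estimate at $z$ costs a factor $1/\operatorname{dist}(z,\partial\Omega_{r_0}^o)$, which blows up along the orbit, while the sup bound $|h^{(i)}_\lambda-\mathrm{id}|=O(\lambda)$ is far too weak at the relevant scales: consecutive points $c_{nq+j}$ and $c_{(n+1)q+j}$ are at mutual distance $\asymp|c_{nq+j}-a_j|^{p+1}\to 0$, so no single $\eps'$ makes your derivative bound $\le 1/2$ for all of them, and injectivity of the averaged limit near $a_j$ does not follow. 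This is exactly the difficulty the paper flags after Proposition~\ref{mtom+1} and resolves by a different mechanism: Bers--Royden makes every lift $K_0$-qc with $K_0$ close to $1$, the Leau--Fatou Lemma~\ref{lem:leau-fatou} shows the dilatation is supported in the thin cusps $\cC_j(\tau)$ around the repelling directions, and the Cauchy--Pompeiu distortion Lemma~\ref{lem:distortionparabolic} exploits the angular separation between $\cC_j$ and $\cC'_j$ (so $|u-z_\ell|\ge\rho|u|$ for $u$ in the support of $\overline{\partial}h^{(i)}_\lambda$ and $z_\ell\in\cC'_j$) to produce the uniform difference-quotient estimate on $\cC'_j(\tau)$ that your Cauchy estimate cannot provide. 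Without some quantitative control of this kind near $a_j$, beyond holomorphy on $\Omega_{r_0}^o$, your proof of Theorem B is incomplete in the parabolic case.
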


\begin{ThmC}
Suppose for any $m\ge 1$, there is an admissible holomorphic motion $h_{\lambda,m}$ of $\overline{P}_{r_0}$ over $(\D_{\eps_m},0)$ for some $\eps_m>0$ such that $$\dfrac{d}{d\lambda}h_{\lambda,m}(c_1)\bv_{\lambda=0}=1$$
and $h_{\lambda,m}$ is asymptotically invariant of order $m$.
Then the second alternative of the Main Theorem holds.
\end{ThmC}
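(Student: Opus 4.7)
The plan is to use the chain of asymptotically invariant motions $h_{\lambda,m}$ to produce approximate conjugacies at the cycle $\mathcal{O}$ at every order, and then deduce that the cycle together with its multiplier must persist exactly under the deformation. First reparametrize: since $\partial_\lambda h_{\lambda,m}(c_1)|_{\lambda=0}=1$, the map $\lambda\mapsto w:=h_{\lambda,m}(c_1)$ is a local biholomorphism at $0$, and $\phi_w^{(m)}(x):=h_{\lambda(w),m}(x)$ defines a holomorphic motion of $P_{r_0}$ over a neighborhood of $c_1$ with $\phi_{c_1}^{(m)}=\mathrm{id}$. By admissibility and Hartogs, $\phi^{(m)}$ is jointly holomorphic in $(w,x)$ on that neighborhood times $\Omega_{r_0}^o$. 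Substituting the lift relation $G_{h_\lambda(c_1)}(\widehat h_\lambda(x))=h_\lambda(g(x))$ into the asymptotic invariance $\widehat h_\lambda-h_\lambda=O(\lambda^{m+1})$ on $P$ gives
\[\Phi^{(m)}(w,x):=G_w(\phi_w^{(m)}(x))-\phi_w^{(m)}(g(x))=O((w-c_1)^{m+1})\quad\text{for each }x\in P.\]
Since the tail of $P$ lies in $\Omega_{r_0}^o$ and accumulates at every $a_j$, the identity principle applied coefficient-wise to the Taylor expansion of $\Phi^{(m)}$ in $w$ (whose coefficients are holomorphic in $x$) propagates both $\Phi^{(m)}(w,a_j)=O((w-c_1)^{m+1})$ and $\partial_x\Phi^{(m)}(w,a_j)=O((w-c_1)^{m+1})$. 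Setting $b_m(w):=\phi_w^{(m)}(a_0)$ and $\psi_m(w):=\partial_x\phi_w^{(m)}(a_0)$ (both holomorphic in $w$, with $b_m(c_1)=a_0$ and $\psi_m(c_1)=1$, so $\psi_m$ is nonzero near $c_1$), and iterating along the cycle $a_0\mapsto a_1\mapsto\cdots\mapsto a_0$, one obtains, for $F_w(z):=G_w^q(z)-z$,
\[F_w(b_m(w))=O((w-c_1)^{m+1})\quad\text{and}\quad DG_w^q(b_m(w))-\kappa=O((w-c_1)^{m+1}).\]

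If $\kappa\ne 1$, the implicit function theorem furnishes a holomorphic fixed point $a_0(w)$ of $G_w^q$ with $a_0(c_1)=a_0$; combining $F_w(b_m(w))=O((w-c_1)^{m+1})$, $F_w(a_0(w))=0$, and $\partial_zF_w|_{(c_1,a_0)}=\kappa-1\ne 0$ gives $a_0(w)-b_m(w)=O((w-c_1)^{m+1})$, and hence $\kappa(w):=DG_w^q(a_0(w))=\kappa+O((w-c_1)^{m+1})$. As $m$ is arbitrary, $\kappa(w)\equiv\kappa$ near $c_1$, and setting $a_j(w):=G_w^j(a_0(w))$ gives the second alternative of the Main Theorem.

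If $\kappa=1$, the non-degeneracy of the parabolic cycle makes $F_{c_1}$ vanish to order exactly $2$ at $a_0$, and the Weierstrass preparation theorem provides a factorization
\[F_w(z)=A(w,z)\bigl((z-\beta(w))^2-\gamma(w)\bigr)\]
in a neighborhood of $(c_1,a_0)$, with $A(c_1,a_0)\ne 0$, $\beta(c_1)=a_0$, $\gamma(c_1)=0$. The value bound becomes $(b_m-\beta)^2-\gamma=O((w-c_1)^{m+1})$, while the derivative bound $\partial_zF_w(b_m(w))=DG_w^q(b_m(w))-1=O((w-c_1)^{m+1})$ together with $\partial_z\bigl((z-\beta)^2-\gamma\bigr)=2(z-\beta)$ yields $b_m(w)-\beta(w)=O((w-c_1)^{m+1})$. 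Substituting back into the value bound gives $\gamma(w)=O((w-c_1)^{m+1})$, and since $m$ is arbitrary, $\gamma\equiv 0$. Hence $F_w(z)=A(w,z)(z-\beta(w))^2$: setting $a_0(w):=\beta(w)$ and $a_j(w):=G_w^j(a_0(w))$ produces a persistent fixed point of $G_w^q$ with multiplier $\equiv 1=\kappa$, which is the second alternative.

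The main difficulty is the transfer of the approximate functional equation from the sequence $P$ (where it is given) to the actual cycle points: admissibility supplies holomorphy in $x$ on $\Omega_{r_0}^o$, but to conclude the same asymptotic bound at $a_j$ one must not simply take a limit (the $O((w-c_1)^{m+1})$ error need not be uniform in $x$) but instead apply the identity principle to each $w$-Taylor coefficient of $\Phi^{(m)}$ separately, using that $P$ accumulates at $a_j$ from within $\Omega_{r_0}^o$. The Weierstrass preparation step in the $\kappa=1$ case is the other delicate point: the discriminant $\gamma$ is not controlled by any single bound on $(b_m,\partial_zF_w)$, and the argument hinges on the triangular structure that lets one solve first for $b_m-\beta$ and only afterwards for $\gamma$.
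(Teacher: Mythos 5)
Your reduction works cleanly in the hyperbolic attracting case, but it has a genuine gap precisely in the parabolic case, which is where the content of Theorem C lies. The step that fails is the propagation of asymptotic invariance to the first $x$-derivative at the cycle: you need $\psi_m(w):=\partial_x\phi^{(m)}_w(a_0)$ to exist and $\partial_x\Phi^{(m)}(w,a_j)=O((w-c_1)^{m+1})$, and you justify both by applying the identity principle to the $w$-Taylor coefficients $\Phi_k(x)$ on $\Omega_{r_0}^o$. In the parabolic case, however, $a_j$ is \emph{not} an interior point of $\Omega_{r_0}$: by Lemma \ref{lem:Omega}, $B(a_j,r)\setminus\Omega_{r_0}$ is nonempty and consists of cusps along the repelling directions which touch $a_j$. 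Hence (i) the admissible motion is only quasiconformal, not holomorphic or even differentiable, at $a_j$, so $\partial_x\phi^{(m)}_w(a_0)$ is not defined there; and (ii) the tail of $P$ accumulates at $a_j$ only from within the attracting petals, i.e.\ at a \emph{boundary} point of the domain where the coefficients $\Phi_k$ are holomorphic, and the identity principle does not apply to zeros accumulating at a boundary point (compare $\sin(\pi/z)$ on a half-plane). The value bound $\Phi^{(m)}(w,a_j)=O((w-c_1)^{m+1})$ can still be rescued by continuity of the motion plus a Hurwitz-type argument (this is exactly (\ref{eqn:varphim}) in the paper), but the derivative bound cannot, and without it your Weierstrass step collapses: from the value bound alone, $(b_m-\beta)^2-\gamma=O((w-c_1)^{m+1})$ is perfectly consistent with $\gamma\not\equiv 0$ (e.g.\ $\gamma=\lambda^{2}$ with $b_m-\beta\approx\lambda$), which is precisely Case 2 of Lemma \ref{lem:61}, yielding only a holomorphic fixed point and no constancy of the multiplier. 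The same failure blocks your $\kappa\ne1$ argument when $\kappa$ is a root of unity.

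The paper closes exactly this gap by a different mechanism, which your proof omits entirely: it invokes the lifting property to produce iterated lifts $h^{k}_{\lambda,m}$ of $h_{\lambda,m}$, shows via the approximate invariance and Lemma \ref{lem:61} that if $\sigma(\lambda)=DG^q_{c_1+\lambda}(\varphi(\lambda)+a_1)$ were nonconstant one could fix a small $\lambda$ for which every composition $\mathcal{G}^{(n)}_\lambda$ contracts near $h^{n+q}_{\lambda,m}(a_1)$ at a definite rate $\kappa'<|\kappa|$, and then compares the resulting exponential estimate $|h^{0}_{\lambda,m}(a_1)-h^{0}_{\lambda,m}(c_{lq+1})|=O(\kappa'^{\,l})$ with the true convergence rate of $c_{lq+1}\to a_1$: exponential of rate $|\kappa|$ in the attracting case (using admissibility near $a_1$), only polynomial in the parabolic case (using that qc motions are bi-H\"older). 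Your attracting-case derivation is a legitimate, more direct alternative to the paper's Case 1, but to repair the parabolic case you would need an argument of this dynamical type (or some other substitute for the missing derivative estimate at the parabolic point).
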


\begin{proof} [Proof of the Main Theorem] Assume that the transversality condition fails. Then by Theorems A and B, we obtain a sequence of admissible holomorphic motions $h_{\lambda,m}$ of $\overline{P}_{r_0}$ satisfying the assumption of Theorem C. Thus the second alternative of the Main Theorem holds.%~\marginpar{minor change}
\end{proof}
Theorems A and B will be proved in Sect \ref{sec:ThmA} and \ref{sec:ThmB} respectively, where the hyperbolic case is much easier and will be done first. Theorem C will be proved in Sect \ref{sec:ThmC}.
%where the case $\kappa\not=1$ and $\kappa=1$ will be dealt with separately.
%\marginpar{Added section \\ numbers ??}

\subsection{How to construct admissible holomorphic motions}
We end this section with the following lemma which is useful in constructing admissible holomorphic motions.

\begin{lemma} \label{lem:improvemotion}
Let $h_\lambda$ be a holomorphic motion of $P_{r_0}$ over $(\D_\eps,0)$ for some $\eps>0$ which is asymptotically invariant of order $m$. Assume that for each $K_0>1$, there is a $g$-invariant open set $W\subset P_{r_0}$ such that
\begin{enumerate}
\item $z\mapsto h_\lambda(z)$ is $K_0$-qc in $W$ for all $\lambda\in \D_\eps$;
\item $\Omega_{r_0}\subset \bigcup_{n=0}^\infty g^{-n}(W)$.
\end{enumerate}
Then there exists an admissible  holomorphic motion $\check{h}_\lambda$ of $P_{r_0}$ over $(\D_{\eps'},0)$ for some $\eps'>0$ such that
\begin{equation}\label{eqn:hwidehath}
h_{\lambda}(c_k)-\check{h}_{\lambda}(c_k)=O(\lambda^{m+1})\text{ for each }k\ge 1.
\end{equation}
\end{lemma}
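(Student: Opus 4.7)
The plan is to construct $\widehat{h}_\lambda$ as a double limit of iterated lifts of $h_\lambda$, exploiting the fact that $K$-quasiconformality is transported along backward iterates of the $g$-invariant set $W$, whose union by hypothesis fills out $\Omega_{r_0}$.

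\emph{Iterated lifting.} Fix $K>1$, let $W=W(K)\subset P_{r_0}$ be the $g$-invariant open set provided by the hypothesis, and invoke the lifting property to produce the sequence $h^{(0)}_\lambda=h_\lambda,\ h^{(1)}_\lambda,\ h^{(2)}_\lambda,\ldots$ of holomorphic motions of $P_{r_0}$ over a common disk $\D_\eps$, uniformly bounded by some $M$. The lift relation $G_{h^{(k-1)}_\lambda(c_1)}\circ h^{(k)}_\lambda = h^{(k-1)}_\lambda \circ g$ composes $h^{(k-1)}_\lambda$ with $g$ on the inside and with $G^{-1}_w$ on the outside, both holomorphic (with non-vanishing derivative); an induction on $k$ then shows $h^{(k)}_\lambda$ is $K$-quasiconformal in $z$ on $g^{-k}(W)$. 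Since $W$ is forward $g$-invariant, the sets $g^{-k}(W)$ increase in $k$, and hypothesis (2) together with compactness forces every compact set $\mathcal{K}\subset \Omega_{r_0}^\circ$ to lie in $g^{-k}(W)$ for all $k$ large enough.

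\emph{Limit in $k$.} The uniform bound and Cauchy estimates in $\lambda$ make $\{h^{(k)}_\lambda\}$ a normal family; extract a locally uniform subsequential limit $H^K_\lambda$, itself a holomorphic motion of $P_{r_0}$. On each compact subset of $\Omega_{r_0}^\circ$ the limit inherits $K$-quasiconformality in $z$ from the tails of $h^{(k)}_\lambda$. A parallel induction shows that asymptotic invariance of order $m$ propagates to all iterates, namely $h^{(k+1)}_\lambda(z)-h^{(k)}_\lambda(z)=O(\lambda^{m+1})$ for every $z\in P$ uniformly in $k$; this is obtained by differentiating the functional equation in $\lambda$ and using that $h^{(k)}_\lambda(c_1)$ moves by $O(\lambda^{m+1})$ per step while $G^{-1}_w$ is holomorphic in $w$. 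In particular $H^K_\lambda(c_n)-h_\lambda(c_n)=O(\lambda^{m+1})$ on $P$, with implicit constants independent of $K$.

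\emph{Limit in $K$.} Choose $K_j\downarrow 1$ and corresponding motions $H^{K_j}_\lambda$. Each is $K_j$-qc in $z$ on $\Omega_{r_0}^\circ$; by compactness of the class of $K$-qc maps as $K\to 1$ (whose subsequential limits are conformal), a diagonal extraction produces a limit $\widehat{h}_\lambda$ which is holomorphic in $z$ on $\Omega_{r_0}^\circ$. Combined with the previous step, $\widehat{h}_\lambda$ is an admissible holomorphic motion of $P_{r_0}$ over some $\D_{\eps'}$ satisfying $\widehat{h}_\lambda(c_n)-h_\lambda(c_n)=O(\lambda^{m+1})$ for all $n\ge 1$, as required.

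The principal obstacle is the uniform-in-$k$ propagation of asymptotic invariance through iterated lifts. One must differentiate $G_{h^{(k)}_\lambda(c_1)}(h^{(k+1)}_\lambda(z))=h^{(k)}_\lambda(g(z))$ carefully in $\lambda$ near $\lambda=0$ and, using the uniform bound $M$ from the lifting property, ensure that the errors introduced by applying $G^{-1}_w$ at successive nearby base points $w=h^{(k)}_\lambda(c_1)$ do not accumulate beyond $O(\lambda^{m+1})$. This quantitative control, reminiscent of the propagation implicit in step (B) of the outline, is what allows both limits (first in $k$, then in $K$) to preserve the asymptotic contact between $\widehat{h}_\lambda$ and $h_\lambda$ on the postcritical orbit $P$.
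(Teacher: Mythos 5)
Your proposal is correct and follows essentially the same route as the paper's proof: iterated lifts supplied by the lifting property, compactness of the resulting uniformly bounded motions, preservation of the order-$m$ contact on $P$ via the uniform bound and agreement of $\lambda$-jets, and transport of $K$-quasiconformality through the identity $h^{(0)}_\lambda\circ g^n=\Phi_n\circ h^{(n)}_\lambda$ on $g^{-n}(W)$. The only difference is organizational: the paper takes a single subsequential limit of the lifts and notes that this one limit is $K$-qc on $\Omega_{r_0}^o$ for every $K>1$ simultaneously (hence conformal there), so your second limiting step $K_j\downarrow 1$ is superfluous, though harmless.
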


\begin{proof}
By the lifting property, restricting to a smaller disk $\D_{\eps'}$, the holomorphic motion $h_\lambda$ allows successive lifts $h_\lambda^{(n)}$ of $P_{r_0}$ over $\D_{\eps'}$.
%By construction of $v$,
%$$\left.\frac{d}{d\lambda} h_\lambda^{(k)}(z) \right|_{\lambda=0}=V(z), \forall z\in P, \forall k.$$
By compactness of holomorphic motions, there exists $n_k\to\infty$, such that
$h_\lambda^{(n_k)}$ converges to a holomorphic motion $\check{h}_\lambda$ of $P_{r_0}$ over $\D_{\eps'}$ locally uniformly.

For each $k\ge 1$, by asymptotic invariance of $h_\lambda$, $h_\lambda^{(n)}(c_k)-h_\lambda(c_k)=O(\lambda^{m+1})$, hence (\ref{eqn:hwidehath}) holds.

Let us prove that $z\mapsto \check{h}_\lambda(z)$ is holomorphic in $\Omega_{r_0}^o$. To this end, it suffices to show that $\check{h}_\lambda$ is $K_0$-qc in $\Omega_{r_0}^o$ for each $K_0>1$. Given $K_0>1$, let $W$ be given by the assumption.
%there is a neighborhood $\Omega_0$ of $\{a_0,a_1,\ldots, a_{q-1}\}$ so that $h_{\lambda}(z)$ is $K$-qc in $\Omega_0$ for each %$\lambda\in \D_{\eps'}$.
For each $z_0\in \Omega_{r_0}^o$, there is a neighborhood $Z$ of $z_0$ and $n_0\ge 1$ such that $g^{n_0}(Z)\subset W$, and hence $g^n(Z)\subset W$ for all $n\ge n_0$.  Since
$$h_\lambda^{(0)}(g^n(z))= G_{h_\lambda^{(0)}(c_1)}\circ \cdots  \circ G_{h_\lambda^{(n-1)}(c_1)} (h_\lambda^{(n)}(z)),$$
it follows that $h_\lambda^{(n)}$ is $K_0$-qc in $Z$ for each $n\ge n_0$. Therefore, $\check{h}_\lambda$ is $K_0$-qc in $Z$.
%\marginpar{$U$ to $Z$}
\end{proof}

\section{Admissible holomorphic motions of asymptotic invariance order one}\label{sec:ThmA}
%\marginpar{minor change}
In this section, we shall prove Theorem A. Let $$v(c_n)=\left.\frac{d}{dw} G_w^{n-1}(w)  \right|_{w=c_1},\,\, n\ge 1.$$
So we have
\begin{equation}\label{vrec}
v(c_{n+1})=L(c_n)+Dg(c_n) v(c_n), \, n\ge 1,
\end{equation}
where
\begin{equation} \label{eqn:Lz}
L(z)=\left.\frac{\partial G_w(z)}{\partial w}\right|_{w=c_1}.
\end{equation}
Differentiating (\ref{liftdef}) at $\lambda=0$ we see from (\ref{vrec}) that if $H_\lambda$ is a holomorphic motion of $P_r$ (or simply $P=\{c_n\}\subset P_r$) with $d H_\lambda/d\lambda|_{\lambda=0}(c_n)=v(c_n)$, $n\ge 1$, then $H_\lambda$ is asymptotically invariant of order $1$.

Below we will use the following formula (which follows immediately by induction):
\begin{equation}
Q(z):=\dfrac{\partial G^q_w}{\partial w}\bv_{w=c_1} (z) =\sum_{i=0}^{q-1} Dg^i(g^{q-i}(z)) L(g^{q-i-1}(z)).
\label{eq:partGL}
\end{equation}

\subsection{The hyperbolic case}

%\section{Proof of Main Theorem in the hyperbolic case}\label{sect:hyp}

The following lemma is essentially contained in \cite[Lemma 6.10 and Remark 6.7]{ALdM}, but we add the proof for the reader's convenience.

\begin{lemma}\label{lem:coho}
Let $f:\D\to \D$ be a holomorphic injection such that $f(0)=0$ and $\kappa=f'(0)\in \D\setminus \{0\}$ and let $\Gamma:\D\to \C$ be a holomorphic function. Let $a\in \D\setminus \{0\}$ and $b\in \C$ be arbitrary.
Assume that
\begin{equation}\label{eqn:hypcoh}
\Gamma(0)f''(0)-\Gamma'(0)(f'(0)-1)=0.
\end{equation}
Then there exists a holomorphic map $w:\D\to \C$ such that $w(a)=b$ and
\begin{equation}\label{eqn:coh}
w\circ f(z)= \Gamma(z)+ f'(z) w(z).
\end{equation}
\end{lemma}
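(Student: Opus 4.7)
The plan is to reduce the equation to a linear cohomological equation via Koenigs linearization of $f$ at its attracting fixed point $0$, solve it locally by matching Taylor coefficients (with the hypothesis~\eqref{eqn:hypcoh} providing exactly the compatibility condition needed at order one), and then propagate the local solution to all of $\D$ via the functional equation itself.

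I would first introduce the Koenigs linearizer. By the Schwarz lemma, $f^n\to 0$ locally uniformly on $\D$, so $\phi(z):=\lim_n \kappa^{-n}f^n(z)$ is a well-defined holomorphic function on $\D$ satisfying $\phi(0)=0$, $\phi'(0)=1$ and $\phi\circ f=\kappa\,\phi$. Differentiating gives $\phi'(f(z))f'(z)=\kappa\phi'(z)$; since $f'$ never vanishes ($f$ is injective), any zero of $\phi'$ would propagate forward through the iteration and contradict $\phi'(0)=1$, so $\phi'$ is nowhere zero on $\D$. On a disk $V\ni 0$ on which $\phi|_V$ is biholomorphic and $f(V)\subset V$, the substitution $W:=\phi'\cdot w$ converts the equation into $W\circ f-\kappa W=(\phi'\circ f)\Gamma$, and passing to the Koenigs coordinate $y=\phi(z)$ yields the linear cohomological equation
\begin{equation*}
\widetilde W(\kappa y)-\kappa\,\widetilde W(y)=\widetilde\Gamma(y),\qquad \widetilde\Gamma(y):=\phi'(\phi^{-1}(\kappa y))\,\Gamma(\phi^{-1}(y)),
\end{equation*}
on some disk $\D_\rho\subset\phi(V)$.

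The core of the argument will be matching Taylor coefficients, which gives $(\kappa^n-\kappa)\widetilde W_n=\widetilde\Gamma_n$. The only obstruction occurs at $n=1$, where one needs $\widetilde\Gamma'(0)=0$. Using $\phi''(0)=f''(0)/[\kappa(1-\kappa)]$, obtained by differentiating $\phi\circ f=\kappa\phi$ twice at $0$, the computation
\begin{equation*}
\widetilde\Gamma'(0)=\kappa\phi''(0)\Gamma(0)+\Gamma'(0)=\frac{\Gamma(0)f''(0)-\Gamma'(0)(f'(0)-1)}{1-\kappa}
\end{equation*}
shows that this vanishes precisely by hypothesis~\eqref{eqn:hypcoh}. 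With compatibility in hand, set $\widetilde W_0=\widetilde\Gamma_0/(1-\kappa)$, keep $\widetilde W_1$ as a free parameter, and define $\widetilde W_n=\widetilde\Gamma_n/(\kappa^n-\kappa)$ for $n\ge 2$; the uniform bound $|\kappa^n-\kappa|\ge |\kappa|(1-|\kappa|)>0$ yields convergence of the series on the full disk of holomorphy of $\widetilde\Gamma$, producing a holomorphic solution $w$ on $V$ parametrized by its free value $w'(0)$.

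Finally, to extend $w$ to all of $\D$, I would rewrite the equation as $w(z)=[w(f(z))-\Gamma(z)]/f'(z)$; iterating this formula extends $w$ successively from $V$ to $f^{-n}(V)$ for each $n\ge 0$, and these sets exhaust $\D$ since $f^n(z)\to 0$ for every $z\in\D$. To arrange $w(a)=b$, I would use that $u_0:=\phi/\phi'$ is an explicit solution of the homogeneous equation $u\circ f=f'\,u$, and (by the Taylor analysis above) all holomorphic solutions on $\D$ form the one-parameter family $w_s=w_0+s\,u_0$. Since $a\ne 0$ and $f$ is injective, $f^n(a)\ne 0$ for every $n$; as $f^n(a)$ eventually lies in the injectivity disk of $\phi$, this forces $\phi(a)\ne 0$, hence $u_0(a)\ne 0$, and a unique value of $s$ realizes $w_s(a)=b$. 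The main technical point is the compatibility computation: the substitution $W=\phi'\cdot w$ is chosen precisely to absorb the twist $f'$ into the scalar $\kappa$, turning the otherwise delicate obstruction at order $n=1$ into the explicit expression~\eqref{eqn:hypcoh}.
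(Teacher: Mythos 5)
Your proposal is correct and takes essentially the same route as the paper: you use the Koenigs linearization and the substitution $W=\phi'\,w$ to reduce (\ref{eqn:coh}) to the linear equation $\widetilde W(\kappa y)-\kappa \widetilde W(y)=\widetilde\Gamma(y)$, and you identify the hypothesis (\ref{eqn:hypcoh}), via $\phi''(0)=f''(0)/(\kappa-\kappa^2)$, with the vanishing of $\widetilde\Gamma'(0)$ — exactly the paper's reduction and compatibility computation. The only (minor) divergence is in how the reduced equation is solved and normalized: you match Taylor coefficients locally, propagate to all of $\D$ by $w(z)=(w(f(z))-\Gamma(z))/f'(z)$, and hit $w(a)=b$ by adding a multiple of the explicit homogeneous solution $\phi/\phi'$, whereas the paper works directly on $\phi(\D)$ by summing $-\kappa^{-1}\sum_{n\ge 0}\widetilde\Gamma'(\kappa^n y)$ and integrating; your explicit use of the homogeneous solution to prescribe the value at $a$ is, if anything, a cleaner way to handle that normalization.
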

\iffalse
\begin{remark}
1. Applying a coordinate  change, i.e., replacing $f$ by a conjugate map $\tilde f=\phi^{-1}\circ f\circ\phi$.  results in changing  the initial equation~(\ref{eqn:coh}) to another one of the same form with $f,v,Q$ replaced respectively by
$\tilde f$, $\tilde v=(v\circ \phi^{-1}) (\phi'\circ \phi^{-1})$ and $\tilde Q=Q\circ \phi^{-1}$. In particular, for $\phi$ to be the Koenigs coordinate of $f$ at $0$ we have $f(z)=\kappa z$.

2. The condition (\ref{eqn:hypcoh}) is also necessary. However, the solutions of the equation (\ref{eqn:coh}) are far from being unique. Indeed, if $v$ is a solution and $f(z)=\kappa z$, then $v(z)+ k z$ is also a solution for any $k\in \C$.
\end{remark}
\fi

\begin{proof} %\marginpar{changed defn \\ $\widetilde{\Gamma}(z)$}
Let $\varphi: \D \to \C$ denote the Koenigs linearization, i.e., $\varphi$ is a conformal map onto its image, with $\varphi(0)=0$, $\varphi'(0)=1$ and $\varphi (f(z))=\kappa \varphi(z)$ for all $z\in \D$. Write $\widetilde{w}(z)=w\circ \varphi^{-1}(z)\varphi'(\varphi^{-1}(z))$, and $\widetilde{\Gamma}(z)=\Gamma(\varphi^{-1}(z)) \varphi'(f\circ \varphi^{-1}(z))$. Since $\varphi'(f(z))f'(z)=\kappa \varphi'(z)$, the equation (\ref{eqn:coh}) is reduced to the following form:
\begin{equation}\label{eqn:coh1}
\widetilde{w}(\kappa z)=\widetilde{\Gamma}(z)+ \kappa \widetilde{w}(z).
\end{equation}
From $\varphi(f(z))=\kappa \varphi(z)$, we obtain
$$\varphi'(f(z))f'(z)=\kappa \varphi'(z)$$ and
$$\varphi''(f(z))f'(z)^2+\varphi'(f(z)) f''(z) =\kappa \varphi''(z),$$
hence $\varphi''(0)=f''(0)/(\kappa-\kappa^2)$. Thus the condition (\ref{eqn:hypcoh}) is equivalent to $\widetilde{\Gamma}'(0)=0$.
Under this condition,
$$u(z)=-\kappa^{-1}\sum_{n=0}^\infty \widetilde{\Gamma}'(\kappa^n z)$$
defines a holomorphic map satisfying $\kappa u(\kappa z) =\widetilde{\Gamma}'(z)+\kappa u(z).$
Let $\widetilde{w}$ be a holomorphic map such that $\tilde{w}(\varphi(a))=\varphi'(a)b$, 
$\tilde{w}(0)=\tilde{\Gamma}(0)/(1-\kappa)$ and such that 
$\tilde{w}''(z)=u'(z)$. 
Then it solves the equation (\ref{eqn:coh1}) and $w(z)=\widetilde{w}\circ \varphi(z)/\varphi'(z)$ solves the equation (\ref{eqn:coh}) with $w(a)=b$. 
%(Note that if $\tilde{w}$ is a solution  then for any $c\in \C$, $\tilde{w}(z)+cz$ is also a solution.)
%\marginpar{changed defn \\ $w(z)$}
%gggggggggggggggggg
%
%Let $\varphi: \D \to \C$ denote the Koenigs linearization, i.e., $\varphi$ is a conformal map onto its image, with $\varphi(0)=0$, $\varphi'(0)=1$ and $\varphi (f(z))=\kappa \varphi(z)$ for all $z\in \D$. Write $\widetilde{w}(z)=w(z)\varphi'(z)$, and $\widetilde{\Gamma}(z)=\Gamma(z) \varphi'(f(z))$.  Since $\varphi'(f(z))f'(z)=\kappa \varphi'(z)$, the equation (\ref{eqn:coh}) is reduced to the following form:
%\begin{equation}\label{eqn:coh1}
%\widetilde{w}(\kappa z)=\widetilde{\Gamma}(z)+ \kappa \widetilde{w}(z).
%\end{equation}
%From $\varphi(f(z))=\kappa \varphi(z)$, we obtain
%$$\varphi'(f(z))f'(z)=\kappa \varphi'(z)$$ and
%$$\varphi''(f(z))f'(z)^2+\varphi'(f(z)) f''(z) =\kappa \varphi''(z),$$
%hence $\varphi''(0)=f''(0)/(\kappa-\kappa^2)$. Thus the condition (\ref{eqn:hypcoh}) is equivalent to $\widetilde{\Gamma}'(0)=0$.
%Under this condition,
%$$u(z)=-\kappa^{-1}\sum_{n=0}^\infty \widetilde{\Gamma}'(\kappa^n z)$$
%defines a holomorphic map satisfying $\kappa u(\kappa z) =\widetilde{\Gamma}'(z)+\kappa u(z).$
%Let $\widetilde{w}$ be a holomorphic map such that $\widetilde{w}(\varphi(a))=\varphi'(a)b$ and such that $\widetilde{w}'(z)=u(z)$. Then it solves the equation (\ref{eqn:coh1}) and $w(z)=\widetilde{w}(z)/\varphi'(z)$ solves the equation (\ref{eqn:coh}) with $w(a)=b$.
\end{proof}
\iffalse
Passing to the Koenigs coordinate, we may assume that $f(z)=\kappa z$. By adding a constant to $Q$, we may assume that $Q(0)=0$. Then $Q'(0)=0$, and hence the series %\marginpar{formula \\ corrected}
$$w(z)= - \frac{1}{\kappa} \sum_{j=0}^\infty Q'(\kappa^j z) $$
converges uniformly in $\D_\eps$ for some $\eps>0$. Integrating $w$ we obtain the desired $v$.
\fi
\iffalse
Let $g: U\to \C$ be a holomorphic map and let $P=\{c_n\}_{n=1}^\infty$ be an {\em infinite} orbit of $g$ such that $c_n$ converges to a hyperbolic attracting fixed periodic orbit $\{a_0, a_1, \ldots, a_{q-1}\}$.  Let $(g,G)_W$ be a holomorphic deformation of $g$.
Let $a_j(w)$ denote the analytic continuation of $a_j$, well-defined in a neighborhood of $c_1$ in $W$. Let
$$\kappa(w)=DG_w^q(a_j(w))$$ denote the multiplier.
\fi
\begin{proof}[{\bf Proof of Theorem A in the attracting case}]
Let $\delta>0$ be such that $g^q$ maps $B(a_0, \delta)$ injectively into $B(a_0,\delta)$ and let $N$ be such that $c_N\in B(a_0,\delta)$. By assumption,
$$Q(a_0) D^2 g^q (a_0)=Q'(a_0)(\kappa-1).$$
So by Lemma~\ref{lem:coho}, there is a holomorphic map $w: B(a_0,\delta)\to\C$ such that
$$w(g^q(z))=Q(z)+ Dg^q(z) w(z) \text{ for } z\in B(a_0,\delta),$$
and $w(c_N)=v(c_N)$. The function $w$ extends naturally to a map $w: P_{r_0}\to\C$ that satisfies $w(g(z))=L(z)+Dg(z) w(z)$ which is holomorphic in $\Omega_{r_0}^o$. Since $w(c_N)=v(c_N)$, and $v(c_{n+1})=L(c_n)+Dg(c_n) v(c_n)$, it follows that $w(c_n)=v(c_n)$ for all $n\ge N$.

%The vector field $v$ satifies
%Arguing by contradiction, assume $\kappa'(c_1)=0$.
%Define $\hat{v}: P\to \C$ by
%$$v(c_n)=\left.\frac{d}{dw} G_w^{k-1}(w)\right|_{w=c_1}.$$
%As $w\mapsto G_{w}^{k-1}(w)$, $k=1,2,\ldots$, is a holomorphic motion of $P$ over a small neighborhood of $c_1$, $v$ extends %to a qc vector field of $\C$. This holomorphic motion is clearly invariant under pull bac
%Let $$Q(z)=\left.\frac{d}{dw} G_w^q(z)\right|_{w=c_1}.$$
%We claim that
%the coholomogous equation
%\begin{equation}\label{eqn:hypcoh}
%v\circ g^q(z)= Q(z)+ Dg^q(z) v(z).
%\end{equation}
%
%Since $g^q$ is locally injective at each $a_j$, and

%by Lemma~\ref{lem:coho}, $v$ extends to $P\cup \Omega_r$ such that $v$ is holomorphic in $\Omega_r$.
In particular, $w|P_{\delta/2}$ is Lipschitz. Thus $H_\lambda(z):= z+\lambda w(z)$ defines a holomorphic motion of $P_{\delta/2}$ over $(\D_{\eps},0)$ for some $\eps>0$. Since every point in $\Omega_{r_0}$ eventually lands in $\bigcup B(a_j,\delta/2)$, applying Lemma~\ref{lem:improvemotion} completes the proof. %\marginpar{changed}
%Since $P_{r_0}$ is compact,~\marginpar{not precise??} $w|P_{r_0}$ is Lipschitz. Thus $H_\lambda(z):= z+\lambda w(z)$ defines %a holomorphic motion of $P_{r_0}$ over $\D_{\eps}$ for some $\eps>0$.
\end{proof}

\subsection{The parabolic case}
\begin{lemma}\label{prop:vC1}
Let $W$ be a neighborhood of $0$ in $\C$ and let $f, \Gamma: W\to \C$ be holomorphic functions with $f(0)=0$, $f'(0)=e^{2\pi i l/p}$ and $D^{p+1}f^p(0)\not=0$, where $l, p\in \Z$, $p\ge 1$ and $(l,p)=1$, and with
\begin{equation}\label{eqn:QQ'tan}
\Gamma'(0)(f'(0)-1)= \Gamma(0) f''(0).
\end{equation}
Let $P=\{z_n: n\ge 1\}\subset W$ be an infinite orbit of $f$ such that $z_n=f^{n-1}(z_1)\to 0$ and let $v: P\to \C$ be a function such that
$$v(z)f'(z)+ \Gamma(z)= v(f(z)), \mbox{ for each } z\in P.$$
Then $v$ extends to a $C^1$ map $V: \C\to \C$ such that $\overline{\partial} V(0)=0$.
\end{lemma}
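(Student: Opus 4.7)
The plan is to compare $v$ along $P$ with a polynomial model $V_0$ of degree $p$ that solves the cohomological equation to high order at $0$, show the difference $w = v - V_0$ decays like $|z_n|$ on the orbit with a well-defined slope at $0$, and then patch to a global $C^1$ extension.

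First, using the transversality hypothesis $(f'(0)-1)\Gamma'(0) = \Gamma(0) f''(0)$, I would solve $V_0(f(z)) - \Gamma(z) - f'(z) V_0(z) \equiv 0 \pmod{z^{p+1}}$ order by order in a formal power series. The only obstructions occur at orders $m$ with $(f'(0))^{m-1} = 1$, namely $m = 1, p+1, 2p+1, \dots$; the hypothesis exactly cancels the order-$1$ obstruction (either pinning down $v_0 = \Gamma(0)/(1-f'(0))$ when $f'(0)\ne 1$, or forcing $\Gamma(0)=0$ and $v_0 = -\Gamma'(0)/f''(0)$ when $f'(0)=1$, using $f''(0)\ne 0$). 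For $m = 2, 3, \dots, p$ the order-$m$ equation uniquely determines $v_m$ in terms of a single remaining free parameter $v_1 \in \C$, producing $V_0(z) = v_0 + v_1 z + \dots + v_p z^p$ with remainder $R(z) := V_0(f(z)) - \Gamma(z) - f'(z) V_0(z) = O(z^{p+1})$.

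Next, set $w := v - V_0$ on $P$. The recursion for $v$ gives $w(f(z)) = f'(z) w(z) - R(z)$, which unwinds on the orbit to
\[
w(z_n) = (f^{n-1})'(z_1) w(z_1) - \sum_{k=1}^{n-1} R(z_k) (f^{n-1-k})'(z_{k+1}).
\]
Parabolic asymptotics for $F := f^p$ give $|z_n| \asymp n^{-1/p}$ and $|(f^{n})'(z_1)| \asymp n^{-(p+1)/p}$; via $(f^{n-1-k})'(z_{k+1}) = (f^{n-1})'(z_1)/(f^{k})'(z_1)$ one also has $|(f^{n-1-k})'(z_{k+1})| \asymp (k/n)^{(p+1)/p}$. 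Combined with $|R(z_k)| = O(|z_k|^{p+1}) = O(k^{-(p+1)/p})$, each summand has size $n^{-(p+1)/p}$, so the whole sum is $O(n^{-1/p}) = O(|z_n|)$. Hence $w(z_n) = O(|z_n|)$, and in particular $v(z_n) \to v_0$.

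To upgrade this to $C^1$-regularity with $\overline\partial V(0)=0$, I would refine the analysis to show $v(z_n) = v_0 + \beta z_n + o(|z_n|)$ for a single constant $\beta \in \C$. On each of the $p$ attracting petals of $F$, pass to a Fatou coordinate $\Phi$ satisfying $\Phi \circ F = \Phi + 1$ and set $u(\zeta) := w(\Phi^{-1}(\zeta)) \Phi'(\Phi^{-1}(\zeta))$; iterating the $w$-recursion $p$ times gives $u(\zeta+1) - u(\zeta) = -\widetilde R(z)\Phi'(z)/F'(z)$ with $\widetilde R(z) := \sum_{k=0}^{p-1}(f^{p-1-k})'(f^{k+1}(z)) R(f^k(z)) = O(z^{p+1})$. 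Since $\Phi'(z) \sim 1/(a z^{p+1})$ with $a = F^{(p+1)}(0)/(p+1)!$, the right-hand side has an expansion $-C + O(\zeta^{-1/p})$ at $\infty$, so Euler--Maclaurin summation yields $u(\zeta) = -C\zeta + O(\zeta^{1-1/p})$; translating back through $1/\Phi'(z) \sim a z^{p+1}$ gives a petal-wise subsequence limit $\beta_j := \lim_{k\to\infty}(v(z_{pk+j}) - v_0)/z_{pk+j}$. Crucially, applying the functional equation one step further (using $z_{n+1} = \kappa z_n + O(z_n^2)$ with $\kappa := f'(0)$) to the ansatz $v(z) = v_0 + \beta_j z + o(z)$ yields $\kappa \beta_{j+1} = \Gamma'(0) + f''(0) v_0 + \kappa \beta_j$, and $\Gamma'(0) + f''(0) v_0 = 0$ is just a rewriting of the hypothesis; hence $\beta_{j+1} = \beta_j$ and all $p$ petal limits collapse to a single $\beta$. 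Setting $V(0) := v_0$ and $V'(0) := \beta$, the extension of $v|_P$ to a $C^1$ function on $\C$ is then routine by a Whitney-type interpolation away from $0$ (since $P \setminus B(0,\delta)$ is finite for every $\delta > 0$), and $\overline\partial V(0) = 0$ is immediate from $V(z) = v_0 + \beta z + o(|z|)$. The main technical obstacle is the sharp asymptotic analysis giving the individual $\beta_j$; the subsequent identification $\beta_j = \beta$ is an algebraic consequence of linearising the recursion at $0$, which is where the specific form of the hypothesis is essential.
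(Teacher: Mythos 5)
Your Steps 1--2 are sound and essentially parallel the paper's Step 1: the paper likewise subtracts a polynomial solving the cohomological equation to finite order (and, in addition, a resonant linear term $b_1z$ chosen to kill the $z^{p+1}$ coefficient of the telescoped forcing --- this $b_1$ is exactly the slope $\beta_j=\tilde r/(pa)$ that you later recover via Fatou coordinates), and your identification $\beta_{j+1}=\beta_j$ from the one-step recursion together with $\Gamma'(0)+f''(0)v_0=0$ is a correct use of the hypothesis. The genuine gap is the last step, where you declare the $C^1$ extension ``routine by Whitney-type interpolation away from $0$, since $P\setminus B(0,\delta)$ is finite.'' The difficulty is not away from $0$; it is at pairs of orbit points accumulating at $0$. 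Any $C^1$ extension $V$ with $DV$ continuous at $0$ forces, by integrating $DV$ along the segment from $z_n$ to $z_{n+p}$,
$$v(z_{n+p})-v(z_n)-\beta\,(z_{n+p}-z_n)=o\!\left(|z_{n+p}-z_n|\right),\qquad |z_{n+p}-z_n|\asymp |z_n|^{p+1},$$
and this divided-difference (Whitney) condition is \emph{not} implied by the asymptotics $v(z_n)=v_0+\beta z_n+o(|z_n|)$ that you establish: those only give an error $o(|z_n|)$, which is enormous compared with $|z_n|^{p+1}$. This increment control is precisely what the paper's Steps 2 and 3 supply: after the extra resonant correction it proves $|\mathbf v(z_{n+p})-\mathbf v(z_n)|\le M|z_n-z_{n+p}|^{(p+2)/(p+1)}$, and then uses a $C^1$ straightening diffeomorphism of the orbit rays to actually build the extension. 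So the heart of the lemma is exactly the part you have skipped.

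The gap looks repairable with the tools you already set up, but it must be done explicitly: feed the petal-wise asymptotics, with the precise value of $\beta_j$ coming out of the Euler--Maclaurin computation, back into the $p$-step recursion
$$w(z_{n+p})-w(z_n)=\bigl((f^{p})'(z_n)-1\bigr)w(z_n)-\widetilde R(z_n),$$
and check that the coefficient of $z_n^{p+1}$ on the right equals $\beta a$, so that consecutive increments satisfy the Whitney condition; then sum along the petal for non-consecutive pairs and use angular separation for pairs in different petals, before invoking Whitney extension (or a straightening construction as in the paper). Also note that your error term $O(\zeta^{1-1/p})$ from Euler--Maclaurin must be tracked carefully for this to work. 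As written, the proposal asserts the conclusion of the hardest part rather than proving it, and mislocates where the accumulation problem occurs.
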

\begin{proof}
{\bf Step 1.} Let us prove that there exists a polynomial $h$ and a holomorphic function $\widehat{\Gamma}$ defined near $0$ such that
%\marginpar{Changed back to $p+1$}
\begin{equation}\label{eqn:boldQ}
D^j\widehat{\Gamma}(0)=0, \text{ for } j=0,1,\cdots, p+1,
\end{equation}
and such that
%\marginpar{changed $c_{n+p}$ \\ to $z_{n+p}$}
\begin{equation}\label{eqn:step1}
\textbf{v}(z_n)(f^p)'(z_n)+\widehat{\Gamma}(z_n)=\textbf{v}(z_{n+p}),
\end{equation}
for all $n$ large enough, where $\textbf{v}(z)= v(z)+h(z)$.
% Q(z) - h(z) f'(z) + h(f(z))$$
%satisfies
%\begin{equation}\label{eqn:hatQ}
%\widehat{Q}(z):= Df^p(z) \sum_{k=1}^{p} \frac{Q(f^{k-1}(z))}{Df^k(z)}=O(z^{p+2}) \mbox{ as } z\to 0.
%\end{equation}
%$$|\widetilde{v}(c_{n+p})-\widetilde{v}(c_n)|\le M |c_{n+p}-c_n|^{3/2}.$$

We first deal with the case $p=1$. Then $f''(0)\not=0$ and $\Gamma(0)=0$. Define
$h_1(z)=\Gamma'(0)/f''(0)$, $\Gamma_1=\Gamma(z)-h_1(z)f'(z)+h_1(f(z))$ and
$v_1(z)=v(z)+h_1(z)$. Then $\Gamma_1(0)=\Gamma_1'(0)=0$ and $v_1(z)f'(z)+\Gamma_1(z)=v_1(f(z))$ for each $z\in P$. Let now $b_1=\Gamma_1''(0)/f''(0)$, $h(z)=h_1(z)+b_1 z$. Then
$\widehat{\Gamma}(z)=\Gamma_1(z)-b_1(z f'(z)-f(z))$ and
$\textbf{v}(z)=v_1(z)+b_1 z=v(z)+h(z)$ satisfy the desired property.
\iffalse
-------------------

Calculations: $\widehat{\Gamma}(0)=\Gamma_1(0)-b_1(0 f'(0)-f(0))=\Gamma_1(0)=0$,
$\widehat{\Gamma}'(0)=\Gamma_1'(0)-b_1(0 f''(0)+f'(0)-f'(0))=\Gamma_1'(0)=0$ and
$\widehat{\Gamma}''(0)=\Gamma_1''(0)-b_1 f''(0)=0$.

$$\textbf{v}(z)f'(z)+\widehat{\Gamma}(z)=
(v_1(z)+b_1 z)f'(z) + \Gamma_1(z)-b_1(z f'(z)-f(z))=$$
$$v_1(z)f'(z)+\Gamma_1(z)+b_1 f(z)=v_1(f(z))+b_1 f(z)=
\textbf{v}(f(z))$$

------------------------------
\fi

Now assume $p>1$.   {\bf Claim.} For any $1\le k\le p$, there is a polynomial $h_{1,k}$ such that
%\marginpar{Changed $- h_{1,k}(f(z))$\\ to $+ h_{1,k}(f(z))$}
$\Gamma_{1,k}(z)=\Gamma(z)- h_{1,k}(z) f'(z) + h_{1,k}(f(z))$ satisfies
$$\Gamma_{1,k}(z)=O(z^{k+1}).$$
%\marginpar{\bigskip Changed as \\in Weixiao's \\email}
Let us prove this by induction. For $k=1$, define $h_{1,1}(z)=\Gamma(0)/(f'(0)-1)$. Then the claim follows from (\ref{eqn:QQ'tan}).
Assume now the claim holds for some $1\le k<p$. Let $A$ be such that $\Gamma_{1,k}(z)=Az^{k+1}+O(z^{k+2})$.
%\marginpar{change}
Define $h_{1,k+1}(z)=h_{1,k}(z)+ b z^{k+1}$, where
$b=A/(f'(0)-f'(0)^{k+1})$. Then
$$\begin{array}{rl}\Gamma_{1,k+1}(z)&=\Gamma_{1,k}(z)-b z^{k+1}f'(z)+b f(z)^{k+1}\\
&=Az^{k+1} -b f'(0)z^{k+1}+ bf'(0)^{k+1} z^{k+1}+O(z^{k+2})=O(z^{k+2})\end{array}$$
completing the proof of the claim.

%Let $b_0= \Gamma(0)/(f'(0)-1)$ and for $2\le k\le p$, $b_k:=\Gamma^{(k)}(0)/(k! (\omega-\omega^k))$, where $\omega=f'(0)$.
 %Define $h_1(z)=b_0+\sum_{k=2}^p b_k z^k$ and $\Gamma_1(z)= \Gamma(z)-h_1(z)f'(z)+h_1(f(z))$. By (\ref{eqn:QQ'tan}), we obtain that

Define $h_1=h_{1,p}$,
$\Gamma_1(z)=\Gamma_{1,p}$,  $v_1(z)=v(z)+ h_1(z) $ and
$$\widetilde{\Gamma}_1(z)=Df^p(z) \sum_{k=1}^{p} \frac{\Gamma_1(f^{k-1}(z))}{Df^k(z)}=O(z^{p+1}).$$ Then $v_1(z)f'(z)+\Gamma_1(z)=v_1(f(z))$ for each $z\in P$, hence
$$v_1(z) (f^p)'(z) + \widetilde{\Gamma}_1(z) = v_1(f^p(z)), z\in P.$$
Now take $b_1:=\widetilde{\Gamma}_1^{(p+1)}(0)/(pD^{p+1}f^p(0))$,
$h(z)=h_1(z)+b_1 z$,
$\textbf{v}(z)= v_1(z) +b_1 z$ and $\widehat{\Gamma}=\widetilde{\Gamma}_1(z)-b_1(z(f^p)'(z)-f^p(z))$. %\marginpar{change!!}
Then
$\widehat{\Gamma}$ and $\textbf{v}$ satisfy (\ref{eqn:boldQ}) and (\ref{eqn:step1}).

\iffalse
Define $h(z)=\Gamma''(0)/f''(0)z$.\marginpar{NewG $h$} Then $\widehat{\Gamma}=\Gamma(z)-h(z)f'(z)+h(f(z))$ and $\textbf{v}(z)=v(z)+h(z)$ satisfy the desired property.
%$\textbf{Q}(0)=\textbf{Q}'(0)=0$.
%As $\widetilde{Q}=\widehat{Q}$ in this case, we are done.

Now assume $p>1$.   {\bf Claim.} For any $1\le k\le p$, there is a polynomial $h_{1,k}$ such that
%\marginpar{Changed $- h_{1,k}(f(z))$\\ to $+ h_{1,k}(f(z))$}
$\Gamma_{1,k}(z)=\Gamma(z)- h_{1,k}(z) f'(z) + h_{1,k}(f(z))$ satisfies
$$\Gamma_{1,k}(z)=O(z^{k+1}).$$
%\marginpar{\bigskip Changed as \\in Weixiao's \\email}
Let us prove this by induction. For $k=1$, define $h_{1,1}(z)=\Gamma(0)/(f'(0)-1)$. Then the claim follows from (\ref{eqn:QQ'tan}).
Assume now the claim holds for some $1\le k<p$. Let $A$ be such that $\Gamma_{1,k}(z)=Az^{k+1}+O(z^{k+2})$.
%\marginpar{change}
Define $h_{1,k+1}(z)=h_{1,k}(z)+ b z^{k+1}$, where
$b=A/(f'(0)-f'(0)^{k+1})$. Then
$$\begin{array}{rl}\Gamma_{1,k+1}(z)&=\Gamma_{1,k}(z)-b z^{k+1}f'(z)+b f(z)^{k+1}\\
&=Az^{k+1} -b f'(0)z^{k+1}+ bf'(0)^{k+1} z^{k+1}+O(z^{k+2})=O(z^{k+2})\end{array}$$
completing the proof of the claim.

%Let $b_0= \Gamma(0)/(f'(0)-1)$ and for $2\le k\le p$, $b_k:=\Gamma^{(k)}(0)/(k! (\omega-\omega^k))$, where $\omega=f'(0)$.
 %Define $h_1(z)=b_0+\sum_{k=2}^p b_k z^k$ and $\Gamma_1(z)= \Gamma(z)-h_1(z)f'(z)+h_1(f(z))$. By (\ref{eqn:QQ'tan}), we obtain that
Define $h=h_{1,p}$, %\marginpar{changes}
$\Gamma_1(z)=\Gamma_{1,p}$,  $v_1(z)=v(z)+ h(z) $ and
$$\widetilde{\Gamma}_1(z)=Df^p(z) \sum_{k=1}^{p} \frac{\Gamma_1(f^{k-1}(z))}{Df^k(z)}=O(z^{p+1}).$$ Then $v_1(z)f'(z)+\Gamma_1(z)=v_1(f(z))$ for each $z\in P$, hence \marginpar{$z\in P$ addG}
$$v_1(z) (f^p)'(z) + \widetilde{\Gamma}_1(z) = v_1(f^p(z)), z\in P.$$
Now take $b_1:=\widetilde{\Gamma}_1^{(p+1)}(0)/(pD^{p+1}f^p(0))$, $\textbf{v}(z)= v_1(z) +b_1 z$ and $\widehat{\Gamma}=\widetilde{\Gamma}_1(z)-b_1(z(f^p)'(z)-f^p(z))$. %\marginpar{change!!}
Then
$\widehat{\Gamma}$ and $\textbf{v}$ satisfy (\ref{eqn:boldQ}) and (\ref{eqn:step1}).
\fi

{\bf Step 2.}
Let us prove that there exist $M>0$ and $\sigma>0$ such that
\begin{equation}\label{step2ineq}
|\textbf{v}(z_n)-\textbf{v}(z_{n+p})|\le M|z_n-z_{n+p}|^{1+\sigma}.
\end{equation}
Let $A=D^{p+1}f^p(0)/(p+1)!\not=0$. By the Leau-Fatou Flower Theorem, see Appendix B,
$|(f^p)'(z_n)|\sim 1-|A|(p+1)|z_n|^p$ and
\begin{equation}\label{step2assy}
\frac{z_n}{z_{n+p}}\sim \frac{1}{1-|A||z_n|^p}.
\end{equation}
Fix an arbitrary $\eps\in (0,1)$ and $\delta\in (0, |A|(p-\eps))$. There exists $n_0$ such that
$$\gamma_n:=\left(\frac{|z_n|}{|z_{n+p}|}\right)^{1+\eps} |(f^p)'(z_n)|\le 1-\delta |z_n|^p$$
holds for all $n\ge n_0$. Write $w_n= |\textbf{v}(z_n)|/|z_n|^{1+\eps}$ and let $C_0$ be a constant such that
$|\widehat{\Gamma}(z_n)|\le C_0|z_{n+p}|^{p+2}$ for all $n$ which exists by (\ref{eqn:boldQ}, \ref{step2assy}. Then for all $n\ge n_0$,
\begin{align*}
w_{n+p} \le\gamma_n w_n+\frac{|\widehat{\Gamma}(z_n)|}{|z_{n+p}|^{1+\eps}}
 \le (1-\delta|z_n|^p) w_n + C_0 |z_{n+p}|^{p+1-\eps}.
\end{align*}
Let $M_0>0$ be such that $w_n\le M_0$ for all $n\le n_0$ and such that $C_0|z_{n+p}|^{1-\eps} <\delta M_0$ for all $n\ge n_0$. Then by induction using that $|z_{n+p}|<|z_n|$, we obtain $w_n\le M_0$ for all $n$.

Finally
\begin{multline*}
\textbf{v}(z_{n+p})-\textbf{v}(z_n)=\widehat{\Gamma}(z_n)+ \left((f^p)'(z_n)-1\right) \textbf{v}(z_n)=O(z_n^{p+2}) +O(z_n^{p+1+\eps})\\=O(z_n^{p+1+\eps})
=O(|z_n-z_{n+p}|^{(p+1+\eps)/(p+1)})=O(|z_n-z_{n+p}|^{1+\eps/(p+1)}).
\end{multline*}

Now, let us prove that $\textbf{v}:P\to\C$ extends continuous to $0$ by  showing that
$$\lim_{n\to\infty}\textbf{v}(z_n)=0.$$
It follows from (\ref{step2ineq}) that given $n$ there exists $\lim_{j\to\infty}\textbf{v}(z_{n+jp})$
so it is enough to prove that the latter limit is $0$ for any $n$. Assuming by a contradiction that this is not the case for some $n_0$  and using (\ref{eqn:boldQ}) and (\ref{eqn:step1}) we get (replacing if necessary $n_0$ by $n_0+j_0p$ with a big $j_0$),
$$|\textbf{v}(z_{n_0+jp})|\le |\textbf{v}(z_{n_0})|\Pi_{i=1}^{j-1}|(f^p)'(z_{n_0+ip})|(1+O(|z_{n_0+ip}|^{p+2}))\to 0$$
as $j\to\infty$ because $\Pi_{i=1}^{j-1}(f^p)'(z_{n_0+ip})=(f^{jp})'(z_{n_0})\to 0$ and because  $\sum_{i=0}^\infty|z_{n_0+ip}|^{p+2}\le\sum_{i=0}^\infty O(i^{-(p+2)/p})<\infty$,
a contradiction.

{\bf Step 3.} We shall now prove that $\textbf{v}: P\to \C$ extends to a $C^1$ function $\textbf{V}:\C\to\C$ such that $\partial \textbf{V}(0)=\overline{\partial } \textbf{V}(0)=0$. Once this is proved, we obtain a desired extension $V$ of $v$ by setting $V=\textbf{V}-h$.

Indeed, by the Leau-Fatou Flower Theorem (see Appendix B)  for each $j=0,1,\ldots, p-1$, there exists $\theta_j\in \R$ with $Ae^{2\pi i\theta_j}=-|A|<0, \theta_{j+1}=\theta_j+2\pi/p$ such that the argument $z_{np+j}$ converges to $\theta_j$ as $n\to\infty$. Moreover, the argument of $z_{n+p}-z_n=Az_n^p (1+o(z_n))$ converges to $\pi$. Therefore, there is a $C^1$ diffeomorphism $H:\C\to \C$ with $H(z)=z+o(|z|)$ near $z=0$ such that $H^{-1}(z_{np+j})$ lies in order on the ray $\theta=\theta_j$. Write $z'_n=H^{-1}(z_n)$. Then $\textbf{v}\circ H(z'_n)-\textbf{v}\circ H(z'_{n+p})=o(|z'_n-z'_{n+p}|)$, $\textbf{v}\circ H|H^{-1}(P)$, and hence $\textbf{v}$, extends to a $C^1$ map defined on $\C$ with zero partial derivatives at $0$.
\end{proof}
\begin{proof}[{\bf Proof of Theorem A in the parabolic case}] Define $v(c_n)=\left.\frac{d G_w^{n-1}(w)}{dw}\right|_{w=c_1}$. By Proposition~\ref{prop:vC1} applying for $f(z)=g^q(z+a_j)$ and $\Gamma(z)=Q(z+a_j)$ for each $j$, the vector field $v$ on $P$ extends to a $C^1$ vector field $V$ in $\C$ such that $\overline{\partial} V(z)\to 0$ as $z\to a_j$, $j=0,1,\ldots, q-1$. We can surely make the extension compactly supported. So $\mu=\overline{\partial} V$ is a qc vector field. By the Measurable Riemann Mapping Theorem, there is a holomorphic motion $h_\lambda$ of $\C$ over some disk $\D_\eps$, such that $\overline{\partial}h_\lambda=\lambda \mu \partial h_\lambda$ and $h_\lambda(z)=z+o(1)$ as $z\to\infty$. In particular, $h_\lambda$ defines a holomorphic motion of $P_{r_0}$ which is asymptotically invariant of order $1$ and
$$\left.\frac{d}{d\lambda} h_\lambda(z) \right|_{\lambda=0}=V(z), \forall z\in P.$$
%~\marginpar{minor change}%In particular,
To complete the proof, we shall apply Lemma~\ref{lem:improvemotion}. Let us verify the conditions. For each $K>1$, choose $r$ small enough so that $|\overline{\partial} V|<(K-1)(K+1)$ holds in $\Omega_r\subset \bigcup_j \overline{B(a_j, r)}$ and let $W=\Omega_r$. Both conditions are clearly satisfied.
\end{proof}

\section{Averaging and promoting asymptotic invariance}\label{sec:ThmB}
In this section, we prove Theorem B.

\subsection{The averaging process}\label{subsec:averaging}
Suppose that $(g,G)_W$ is a local holomorphic deformation of a marked map $g: U\to \C$ which has the lifting property of some set $K$ with $P\subset K$ and $g(K)\subset K$.
Let $h_\lambda$ be a holomorphic motion of $K$ over $(\D_\epsilon,0)$. By the lifting property, there is $\eps'>0$ and a sequence $h_\lambda^{(k)}$, $k=0,1,\cdots$ of holomorphic motions of $K$ over $(\D_{\epsilon'},0)$ so that $h^{(0)}_\lambda=h_\lambda$ and $h^{(k+1)}_\lambda$ is the lift of $h^{(k)}_\lambda$, for each $k\ge 0$.

Let $H_\lambda$ be a (locally uniform) limit for some subsequence $k_i\to \infty$ of
$$\tilde{h}_\lambda^{(k)}:= \dfrac{1}{k} \sum_{i=0}^{k-1} h_\lambda^{(i)}.$$
The following proposition is proved in \cite[Lemma 2.12]{LSvS1}, \cite[Lemma 6.4]{LSvS1a}.
     \begin{prop}\label{mtom+1} Assume
     $h_\lambda^{0}$ is asymptotically invariant of some order $m$, i.e.
     $$h_\lambda^{(k+1)}(c_j)- h^{(k)}_\lambda(c_j)=O(\lambda^{m+1}), \ \ j=1,2,\cdots \mbox{ as }\lambda\to 0$$
     for $k=0$ (hence, for all $k$).
     Then $H_\lambda$ is asymptotically invariant of order $m+1$, i.e.
      $$\hat{H}_\lambda(c_j)- H_\lambda(c_j)=O(\lambda^{m+2}), \ \ j=1,2,\cdots \mbox{ as }\lambda\to 0.$$
     \end{prop}

When $K$ is finite, then $H_\lambda$, when restricting $\lambda$ to a smaller disk, is automatically a holomorphic motion. However, this is not necessarily the case when $K$ has infinite cardinality. We solve this issue by considering holomorphic motions which are `almost' conformal near $\mathcal{O}$, using distortion estimates.
%a holomorphic motion over the  set $K$ (which in general has infinite cardinality).

%On the other hand,
% for each $N\ge 1$ there exists $r_N>0$ so that $H_\lambda$ is a holomorphic motion of $\{c_n\}_{n=1}^N$ over $\D_{r_N}$.
%Therefore, for each $N\ge 2$ there exists a well-defined lift $\hat{H}_{N,\lambda}$ of $H_\lambda$ so that
%$\hat{H}_{N,\lambda}$ is a holomorphic motion of $\{c_n\}_{n=1}^{N-1}$ over some $\D_{r'_N}$
%where $r'_N\downarrow 0$ as $N\to \infty$.
%From the definition of lift, it follows that $\hat{H}_{N+1,\lambda}(c_n)=\hat{H}_{N,\lambda}(c_n)$,
%for $n=1,\cdots,N-1$ and $|\lambda|<r'_N$. Define $\hat H_\lambda(c_n):=\hat H_{n+1,\lambda}(c_n)$, for $n\ge 1$ and %$|\lambda|<r'_{n+1}$.

\subsection{The attracting case}
\begin{proof}[{\bf Proof of Theorem B in the attracting case}]
%Let $P_r=P\cup \Omega_r$ for a small $r>0$, so that $\Omega_r=\bigcup_{j} B(a_j,r)$ and $g^{nq}(z)\to a_j$ converges uniformly %on $B(a_j,r)$.
Let $h_\lambda$ be an admissible holomorphic motion of $P_{r_0}$ over $(\D_\eps,0)$ which is asymptotically invariant of order $m$ and let $h_\lambda^{(k)}$, $\hat{h}_\lambda^{(k)}$ and $H_\lambda$ be as in Subsection~\ref{subsec:averaging}.

Let us prove that there is $r\in (0,r_0)$ and $\eps_1>0$ such that $H_\lambda$ is an admissible holomorphic motion of $P_{r}$ over $(\D_{\eps_1},0)$. Indeed, by definition of the lifting property, there exists $M>0$ and $\eps_2>0$ such that $|h_\lambda^{(k)}(z)|\le M$ for all $z\in P_{r_0}$ and $\lambda\in \D_{\eps_2}$. By Slodowski's theorem, there exists $M'>0$ such that $h_\lambda^{(k)}$ extends to a holomorphic motion of $\C$ over $\D_{\eps_2}$ such that $h_\lambda^{(k)}(z)=z$ whenever $|z|>M'$. By Bers-Royden's Theorem~\cite{BersRoyden}, there exists $K(\lambda)>1$ for each $\lambda\in \D_{\eps_2}$ with $K(\lambda)\to 1$ as $\lambda\to 0$ such that for each $k=0,1,\ldots$, $h_\lambda^{(k)}$ is $K(\lambda)$-qc. Thus for each $\delta>0$ there exists $\eps(\delta)>0$ such that $|h_\lambda^{(k)}(z)-z|\le \delta$ for all $z\in P_{r_0}$ and $\lambda\in \D_{\eps(\delta)}$. Since $a_j$ is in the interior of $P_{r_0}$, it follows that there is $\eps_3>0$ such that $|(h_\lambda^{(k)})'(a_j)-1|<\frac{1}{3}$ for all $\lambda\in \D_{\eps_3}$. By
the Koebe Distortion Theorem, there exists $r\in (0,r_0)$ such that for any $z_1, z_2\in B(a_j, r)$, $z_1\not=z_2$, $$\left|\frac{h_\lambda^{(k)}(z_1)-h_\lambda^{(k)}(z_2)}{z_1-z_2}-1\right|<\frac{1}{2},$$
which implies that
$$\left|\frac{H_\lambda(z_1)-H_\lambda(z_2)}{z_1-z_2}-1\right|\le\frac{1}{2},$$
hence $H_\lambda$ is a holomorphic motion of $\Omega_{r}$ over $\D_{\eps_3}$. As $P\setminus \Omega_{r}$ is finite, the statement follows by choosing $\eps_1$ sufficiently small.

To complete the proof, we extend $H_\lambda$ to a holomorphic motion of $P_{r_0}$ in an arbitrary way and then apply Lemma~\ref{lem:improvemotion} as follows: simply take $W=\Omega_r$ for each $K>1$.
%%
%Now let us apply the lifting property once again to obtain an admissible holomorphic motion $\widetilde{h}_\lambda$ of %$P_{r_0}$ with $\widetilde{h}_\lambda(c_n)-H_\lambda(c_n)=O(\lambda^{m+2})$ for each $n\ge 1$. To this end, we first extend %$H_\lambda$ to a holomorphic motion of $P_{r_0}$, denoted by the same notation. Let $H_\lambda^{(k)}$ denote the successive %lifts of $H_\lambda^0=H_\lambda$. By compactness of holomorphic motions, there exists $n_j\to\infty$ such that %$H_\lambda^{(n_j)}$ uniformly to a holomorphic motion $\widetilde{h}_\lambda$. For each $z_0\in \Omega_{r_0}^o$, there is a %neighborhood $W$ such that $g^n(W)\subset \Omega_{r'}$ for all $n$ large. Thus $H_\lambda^{(n_j)}(z)$ is holomorphic in $W$ %for all $j$ large. Therefore  $\widetilde{h}_\lambda$ is holomorphic in $W$. This proves that $\widetilde{h}_\lambda$ is an %admissible holomorphic motion of $P_{r_0}$.
\end{proof}
\subsection{The parabolic case}
The parabolic case is more complicated. We shall need the following distortion lemma:
\begin{lemma}\label{lem:distortionparabolic}
Given a positive integer $\m$, $\alpha\in (0,1)$, $M>0$ and $R>0$ with $3R<M$ and $(3R)^\alpha<\pi/(4\m)$,  there is $K_0>1$
such that if $H:B(0,M)\to B(0,M)$ is a $K_0$-qc map satisfying
$H|\partial B(0,M)=id$ and
$$\overline{\partial}H=0\text{ a.e. on }B(0,3R)\setminus \cC(R),$$
where $$\mathcal{C}(R)=\{re^{2\pi it}: 0<r<3R, |t-(2k+1)\pi/\m|< r^{\alpha}\text{ for some } k=0,1,\ldots, \m-1\},$$
then for any $z_1, z_2\in \cC'(R)=\{re^{2\pi it}: 0<r<R, |t-2k\pi/\m|< r^{\alpha}\text{ for some } k=0,1,\ldots, \m-1\}$, we have
$$|H(z_1)-H(z_2)-(z_1-z_2)|\le \dfrac{1}{2}|z_1-z_2|.$$
\end{lemma}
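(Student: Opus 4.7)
My plan is to argue by contradiction. Assuming no such $K_0$ exists, one extracts sequences $K_n \downarrow 1$, $K_n$-qc maps $H_n$ satisfying the hypothesis, and points $z_1^n, z_2^n \in \cC'(R)$ such that $|H_n(z_1^n) - H_n(z_2^n) - (z_1^n - z_2^n)| > |z_1^n - z_2^n|/2$. The proof then has three main steps: uniform closeness to the identity (via Mori), a scale-invariant sharpening of that closeness via rescaling, and a path-integral bound using the holomorphic structure of $H_n$ on the conformal region.

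First, by Mori's theorem applied to $K_n$-qc self-maps of $B(0,M)$ with boundary identity, the family $\{H_n\}$ is equicontinuous on compact subsets and any subsequential limit is a $1$-qc self-map of $B(0,M)$ with boundary identity, hence the identity. Thus $\eta_n := \sup_{B(0,M)}|H_n - \id| \to 0$. Next, the hypothesis $(3R)^\alpha < \pi/(4\m)$ gives angular separation at least $\pi/(2\m)$ between $\cC(R)$ and $\cC'(R)$, so there is a constant $c_0 = c_0(\m,\alpha) > 0$ such that for every $z \in \cC'(R)$, the disk $D(z, c_0|z|)$ lies inside the conformal region $\Omega := B(0,3R) \setminus \overline{\cC(R)}$. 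Since each $H_n$ is holomorphic on $\Omega$, Cauchy's estimate applied to $H_n - \id$ on $D(z, c_0|z|/2)$ yields $|H_n'(z) - 1| \le 2\eta_n/(c_0|z|)$.

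To kill the unwanted $1/|z|$ factor, I would upgrade to a scale-invariant bound by rescaling. For each $r \in (0, R]$, the map $\widetilde H_n(w) := H_n(rw)/r$ is $K_n$-qc on $B(0,M/r)$ with $\widetilde H_n|_{\partial B(0,M/r)} = \id$, and its Beltrami coefficient, inside the rescaled disk $B(0, 3R/r)$, is supported on cusps whose area is $O(r^\alpha)$ on every fixed compact set. Combining Mori-type compactness with an Ahlfors--Bers-type estimate for qc maps whose Beltrami coefficient has vanishing support area on bounded sets, one obtains $\widetilde H_n \to \id$ uniformly on compact subsets of $\C$, uniformly in the rescaling parameter $r$. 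Translating back gives the scale-invariant bound $\widetilde \eta_n := \sup_{z \in \cC'(R)}|H_n(z) - z|/|z| \to 0$, and feeding this into Cauchy's estimate (using $|H_n(w) - w| \le (3/2)\widetilde\eta_n|z|$ on $D(z, c_0|z|/2)$) yields $\sup_{z \in \cC'(R)}|H_n'(z) - 1| \le C_1 \widetilde\eta_n \to 0$.

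To finish, for any $z_1, z_2 \in \cC'(R)$ I would construct a rectifiable path $\gamma \subset \Omega$ from $z_1$ to $z_2$ of length $\le C_2 |z_1 - z_2|$ along which the uniform derivative bound applies, where $C_2 = C_2(\m,\alpha)$. If $z_1, z_2$ lie in the same attracting cusp the path follows the cusp; if they lie in distinct cusps, the path moves radially inward toward the origin (where the repelling cusps have narrow angular width $s^\alpha$), arcs around the intervening repelling directions along a small circle, and returns outward. Integration then yields $|H_n(z_1) - H_n(z_2) - (z_1 - z_2)| \le C_1 C_2 \widetilde\eta_n |z_1 - z_2|$, which is $< |z_1 - z_2|/2$ for $n$ large, a contradiction. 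The main obstacle is the scale-invariant bound $\widetilde\eta_n \to 0$ uniformly in $r$: since the boundary identity condition on $\partial B(0,M/r)$ recedes to infinity as $r \to 0$, a direct Mori bound on the rescaled maps degrades, and one must crucially exploit the vanishing area of the rescaled Beltrami support on bounded sets, which is where the Hölder exponent $\alpha > 0$ of the cusps enters quantitatively.
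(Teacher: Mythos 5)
Your overall scheme (contradiction with $K_n\downarrow 1$, uniform closeness to the identity, then a derivative bound integrated along paths) is genuinely different from the paper's argument, but it has a real gap at exactly the point you flag as the ``main obstacle''. Nothing in the hypotheses forces $H(0)=0$: a $K$-qc self-map of $B(0,M)$ that is the identity on $\partial B(0,M)$ and conformal off the cusps will in general displace the origin by an amount comparable to $K-1$ (for $\m=1$ there is not even a rotational symmetry to prevent it). Since $\cC'(R)$ contains points arbitrarily close to $0$, the quantity $\widetilde\eta_n=\sup_{z\in\cC'(R)}|H_n(z)-z|/|z|$ is then infinite, so the claimed convergence $\widetilde\eta_n\to 0$ is false as stated. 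Subtracting $H_n(0)$ is harmless for the conclusion (which only involves differences), but the rescaling argument still does not deliver the normalized claim: the rescaled map $\widetilde H_n(w)=H_n(rw)/r$ feels not only the small-area Beltrami support on a fixed compact set but also the dilatation at original scales between $r$ and $3R$; in rescaled coordinates its effect on $B(0,1)$ includes a translation-type term of size (displacement of $H_n$ near scale $r$)$/r$, which blows up as $r\to 0$ for fixed $n$. Hence $\widetilde H_n\to\id$ uniformly in $r$ is not a consequence of compactness plus vanishing support area; it is precisely the quantitative, scale-uniform estimate that has to be proved, and a soft normal-families argument cannot produce it because the family of rescalings is not uniformly controlled without first having a bound of the very type you are after.

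The path step has a second, more elementary problem: the repelling cusps of $\cC(R)$ run from the origin all the way out to $|z|=3R$, so $B(0,3R)\setminus\cC(R)$ is disconnected, with one component per attracting petal. Two points of $\cC'(R)$ in different attracting cusps cannot be joined by a rectifiable path inside the conformal region, and any arc ``around the repelling directions'' at radius $<3R$ necessarily enters $\cC(R)$; the only option is to route the path through the origin, which again requires the uniform-in-scale derivative control near $0$ that is missing. The paper avoids both issues by a direct quantitative argument: apply the Cauchy--Pompeiu formula to $H-\id$ on $B(0,3R)$ and subtract the two representations at $z_1,z_2$; the boundary term contributes at most $\tfrac14|z_1-z_2|$, while the area term is supported on $\cC(R)$, where the angular separation coming from $(3R)^\alpha<\pi/(4\m)$ gives $|u-z_i|\ge\rho(\m)|u|$, and the cusp width $r^\alpha$ makes $\int\int_{\cC(R)}|u-z_1|^{-q'}|u-z_2|^{-q'}\,d|u|^2$ finite for $1<q'<1+\alpha/2$; the smallness of $\|\overline{\partial}H\|_{L^{p'}}$ for $K_0$ close to $1$ (Ahlfors) then yields the remaining $\tfrac14|z_1-z_2|$. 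If you want to rescue your approach, you would in effect need this same weighted integral estimate to control the behaviour near the origin, at which point the contradiction/rescaling scaffolding becomes unnecessary.
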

\begin{proof} Choose $q'\in (1, 1+\alpha/2)$ and let $p'>1$ be such that $1/p'+1/q'=1$. Let $D=B(0, 3R)$. Let $\eps>0$ be a small constant to be determined. Since $H|\partial B(0,M)=id$, it is well-known, see for example~\cite[Chapter V]{Ah}, that provided that $K_0$ is sufficiently close to $1$, $$\int\int_{D} |\overline{\partial} H|^{p'} d|u|^2<\eps^{p'},\, \text{ and }|H(z)-z|<\eps R, \text{ for all }z\in D.$$
Since $H$ is ACL, we can apply the Cauchy-Pompeiu Formula
$$H(z)-z=\dfrac{1}{2\pi i} \int_{\partial D} \frac{H(u)-u}{u-z} \, du - \frac{1}{\pi} \int\int_{D} \frac{\overline{\partial} H(u)}{u-z} \,
|du|^2,$$
for $z\in D$.
For $z_1, z_2\in \cC'(R)$, and $u\in \partial D$, we have $|u-z_1|\ge 2R$, $|u-z_2|\ge 2R$, so
\begin{multline*}
 \left|\dfrac{1}{2\pi i} \int_{\partial D} \frac{H(u)-u}{u-z_1} du-\dfrac{1}{2\pi i} \int_{\partial D} \frac{H(u)-u}{u-z_2} du \right|
=  \left|(z_1-z_2)\dfrac{1}{2\pi i} \int_{\partial D} \frac{H(u)-u}{(u-z_1)(u-z_2)} du\right|\\
\le  |z_1-z_2|\dfrac{1}{2\pi} \frac{\eps R}{4R^2}\cdot 2\pi \cdot 3R < \frac{1}{4} |z_1-z_2|,
\end{multline*}
where we choose $\eps$ small enough to obtain the last inequality.
For $u\in \cC(R)$, we have $|u-z_j|\ge \rho |u|$, where $\rho=\rho(\m,\alpha,R)>0$ is a constant. Thus
\begin{multline*}
\int\int_{\cC(R)} \frac{1}{|u-z_1|^{q'}|u-z_2|^{q'}} d|u|^2
\le  \frac{1}{\rho^{2q'}} \sum_{k=0}^{\m-1} \int_0^{3R} \int_{|t-(2k+1)\pi/\m|< r^{\alpha}} \frac{1}{r^{2q'}} r dt dr\\
=  \frac{2\m}{\rho^{2q'}}\frac{(3R)^{2+\alpha-2q'}}{2+\alpha-2q'}=: C.
\end{multline*}
Therefore, %\marginpar{small changes}
\begin{multline*}
\left|\int\int_{D} \frac{\overline{\partial} H(u)}{u-z_1} d|u|^2-\int\int_{D} \frac{\overline{\partial} H(u)}{u-z_2}d|u|^2\right|
= \left|\int\int_{\cC(R)} \frac{\overline{\partial} H(u)}{u-z_1} d|u|^2-\int\int_{\cC(R)} \frac{\overline{\partial} H(u)}{u-z_2} d|u|^2\right|\\
=  |z_1-z_2| \left|\int\int_{\cC(R)} \frac{\overline{\partial} H(u)}{(u-z_1)(u-z_2)}d|u|^2\right|\\
\le |z_1-z_2|\left(\int\int_{\cC(R)} |\overline{\partial} H(u)|^{p'} d|u|^2\right)^{1/p'} \left(\int\int_{\cC(R)} \frac{1}{|u-z_1|^{q'}|u-z_2|^{q'}} d|u|^2  \right)^{1/q'}\\
\le  |z_1-z_2| \eps C^{1/q'} <\pi |z_1-z_2|/4,
\end{multline*}
where, once again, we choose $\eps>0$ small enough to obtain the last inequality.
The lemma follows.
\end{proof}
\begin{proof}[{\bf Proof of Theorem B in the parabolic case}] Extend each $h_\lambda^{(k)}$ to a holomorphic motion of $\C$ over $(\D_{\eps'},0)$ and such that $h_{\lambda}^{(k)}(z)=z$ for all $\lambda\in \D_{\eps'}$, $k$ and $|z|>M'$. Fix $\alpha\in (0,1)$.
Let $R=\tau(r_0)/3$ be given by Lemma~\ref{lem:leau-fatou} (1) and let $\tilde K=K_0(p,\alpha, M', R)$ be given by Lemma~\ref{lem:distortionparabolic}. By \cite{BersRoyden}, there exists $\eps_1>0$ such that $h_{\lambda}^{(k)}$ is $\tilde{K}$-qc for all $\lambda\in \D_{\eps_1}$. Thus for each $z_1,z_2\in \cC'_j$, $0\le j<p$, $z_1\not=z_2$, and any $k\ge 0$, $\lambda\in \D_{\eps_1}$,
$$\left|\frac{h_\lambda^{(k)}(z_1)-h_\lambda^{(k)}(z_2)}{z_1-z_2}-1\right|\le \frac{1}{2},$$
hence $$\left|\frac{H_\lambda(z_1)-H_\lambda(z_2)}{z_1-z_2}-1\right|\le \frac{1}{2}.$$
It follows that $H_\lambda$ is a holomorphic motion of $P\cup \bigcup_j \cC'_j(\tau)$ over $\D_{\eps_2}$ of asymptotic invariance of order $m+1$. By extending it to a holomorphic motion of $P_{r_0}$ in an arbitrary way and applying Lemma~\ref{lem:improvemotion} by taking $W=\bigcup \cC'_j(\tau)$ for all $K_0>1$, we complete the proof. %~\marginpar{minor change}
% Here $\tau$ depends on $K_0$. 
\end{proof}

\section{Asymptotic invariance of an arbitrarily large order}\label{sec:ThmC}
In this section, we shall prove Theorem C. For each $m\ge 1$, let $h_{\lambda, m}$ be a holomorphic motion of $\overline{P}$ over $(\D_{\eps_m},0)$ for some $\eps_m>0$ which is asymptotically invariant of order $m$ such that $h_{\lambda, m}(c_1)=c_1+\lambda+O(\lambda^2)$ as $\lambda\to 0$.
\iffalse
Reparametrizing the holomorphic motions if necessary, we may assume $h_{\lambda, m}(c_1)=c_1+\lambda$ for all $\lambda\in \D_{\eps_m}$.
Reparametrization does not change the asymptotic invariance order: once $h_{\lambda,m}$ is asymptotically invariant of order m and $h_{\lambda, m}(c_1)=c_1+\lambda$, then for all $k\ge 1$, the first $m$ terms in the Taylor series of $h_\lambda(c_k)$ is fixed according to
$h_\lambda(c_k)= G_{h_\lambda(c_1)}^{k-1} (h_\lambda(c_1))+O(\lambda^{m+1})$.
\fi
{\bf Claim:} we may assume that the holomorphic motion $h_{\lambda,m}$ satisfies %\marginpar{Claim added}
 $h_{\lambda, m}(c_1)=c_1+\lambda$. Indeed,  for each $m$ take a reparametrisation $\lambda=\lambda_m(\mu)$ so that  for
$\hat h_{\mu,m}:=h_{\lambda_m(\mu),m}$ we have $\hat h_{\mu, m}(c_1)=c_1+\mu$. Then $\hat h_{\mu,m}$
is still asymptotically invariant of order $m$.  That is, $G_{\hat h_\mu(c_1)}(\hat h_\mu(z))=\hat h_\mu(g(z))+O(\mu^{m+1})$.
Renaming the new holomorphic motion again $h_{\lambda,m}$  the claim follows.
%
%It follows that we may assume that the holomorphic motion $h_{\lambda,m}$ satisfies
% $h_{\lambda, m}(c_1)=c_1+\lambda$.
%
%Reparameterizing the holomorphic motions if necessary, we obtain
%a holomorphic motion $\hat h_{\lambda, m}(c_1)=c_1+\lambda$ for all $\lambda\in \D_{\eps_m}$ which is still
%asymptotically invariant of order $m$. Let us denote this new holomorphic motion again by $h_{\lambda,m}$.

Then for each $k\ge 1$,
$$h_{\lambda,m} (c_k) =G_{\lambda+c_1}^{k-1}(\lambda+c_1)+O(\lambda^{m+1})$$
%\marginpar{H $\to$ h}
and so the first $m$ terms in the Taylor series of $h_\lambda(c_k)$ is fixed according to this formula and therefore
$h_{\lambda,m}(c_k)-h_{\lambda,{m+1}}(c_k)=O(\lambda^{m+1})$.
%(So $H_\lambda^m (c_k)-H_\lambda^{m+1}(c_k)=O(\lambda^{m+1})$.)
%for each $m$, there exists a function $\varphi_m$, holomorphic near the origin such that
Assume without loss of generality that $c_{nq+1}\to a_1$ as $n\to\infty$ and
%put $\varphi_m(\lambda)= \lim_{n\to\infty} H_{c_1+\lambda}^{m}(c_{nq+1})-a_1=: H_{c_1+\lambda}^m (a_1)-a_1$. Then
%\marginpar{Changed}
define $\varphi_m(\lambda):= \lim_{n\to\infty} h_{\lambda,m}(c_{nq+1})-a_1:= h_{\lambda,m} (a_1)-a_1$. Then
\begin{equation}\label{eqn:varphim}
G_{c_1+\lambda}^q(\varphi_m(\lambda)+a_1)=\varphi_m(\lambda) +a_1+O(\lambda^{m+1}).
\end{equation}

%the first m terms in the Taylor series of $h_\lambda(c_k)$ is fixed according to
%$h_\lambda(c_k)= G_{h_\lambda(c_1)}^{k-1} (h_\lambda(c_1))+O(\lambda^{m+1})$.

\begin{lemma}\label{lem:61}  There is a function $\varphi(\lambda)$, holomorphic near $\lambda=0$ and $m_0$, such that
$$G_{c_1+\lambda}^q (\varphi(\lambda)+a_1)=\varphi(\lambda)+a_1,$$
and such that for each $m\ge m_0$,
\begin{equation}\label{eqn:varphimvarphi}
\varphi_m(\lambda)-\varphi(\lambda)=O(|\lambda|^{m/3}) \mbox{ as } \lambda\to 0.
\end{equation}
\end{lemma}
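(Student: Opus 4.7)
The plan is to analyse the zero set of $F(\lambda,u) := G_{c_1+\lambda}^q(a_1+u)-(a_1+u)$ near $(\lambda,u)=(0,0)$, splitting into the two subcases $\kappa\ne 1$ and $\kappa=1$. In both we already know that $\varphi_m$ is holomorphic near $0$ (being the value at $a_1$ of a holomorphic motion) and that $F(\lambda,\varphi_m(\lambda))=O(\lambda^{m+1})$ by \eqref{eqn:varphim}.

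If $\kappa\ne 1$---the hyperbolic case or the parabolic case with $\m\ge 2$---then $\partial_u F(0,0)=\kappa-1\ne 0$. The Implicit Function Theorem furnishes a unique holomorphic $\varphi$ with $\varphi(0)=0$ satisfying $F(\lambda,\varphi(\lambda))\equiv 0$. Subtracting from \eqref{eqn:varphim} and using that $\partial_u F$ is bounded away from $0$ in a neighbourhood of the origin, one obtains by the mean value theorem $\varphi_m(\lambda)-\varphi(\lambda)=O(\lambda^{m+1})$, much stronger than what is claimed.

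The delicate case is $\kappa=1$, equivalently $\m=1$. Then $\partial_u F(0,0)=0$, but the non-degeneracy assumption gives $\partial_u^2 F(0,0)=D^2 g^q(a_1)\ne 0$. By the Weierstrass Preparation Theorem, in a neighbourhood of $(0,0)$,
\[
F(\lambda,u)=U(\lambda,u)\bigl(u^2+A(\lambda)u+B(\lambda)\bigr),
\]
with $U$ a unit and $A(0)=B(0)=0$. Equation \eqref{eqn:varphim} then reads $(2\varphi_m(\lambda)+A(\lambda))^2=\Delta(\lambda)+E_m(\lambda)$, where $\Delta:=A^2-4B$ and $E_m=O(\lambda^{m+1})$. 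The key subclaim is that, if $\Delta\not\equiv 0$, the order $k$ of $\Delta$ at $0$ is even: choosing any $m$ with $m+1>k$, the right-hand side has order exactly $k$, while the left-hand side is the square of a holomorphic function and so has even order. This is where we crucially use the existence of an approximate fixed point $\varphi_m$ of arbitrarily high order $m$.

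Writing $k=2\ell$, the square root $\sqrt{\Delta}$ is holomorphic and the two fixed-point branches $\varphi_\pm(\lambda):=(-A(\lambda)\pm\sqrt{\Delta(\lambda)})/2$ are both holomorphic and satisfy $G_{c_1+\lambda}^q(a_1+\varphi_\pm(\lambda))=a_1+\varphi_\pm(\lambda)$. From the identity $\sqrt{\Delta+E_m}=\sqrt{\Delta}\bigl(1+O(\lambda^{m+1-2\ell})\bigr)$ we deduce $\varphi_m=\varphi_\pm+O(\lambda^{m+1-\ell})$ for one of the two signs; for $m\ge m_0:=3\ell$ this error is of strictly smaller order than $|\varphi_+-\varphi_-|\asymp|\lambda|^\ell$, so the sign is uniquely determined and, comparing consecutive $m$, is independent of $m$. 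Setting $\varphi$ equal to this common branch gives $\varphi_m-\varphi=O(\lambda^{m+1-\ell})=O(\lambda^{m/3})$ for all $m\ge m_0$. The main obstacle is the even-order subclaim, whose content is that the assumed existence of approximate fixed points of all orders forces the discriminant of the Weierstrass polynomial to be a perfect square, ruling out the generic Puiseux-type bifurcation of fixed points and producing two genuine holomorphic branches.
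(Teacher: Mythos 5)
Your argument is correct and follows essentially the same route as the paper: the implicit function theorem (analytic continuation of the fixed point) when $\kappa\neq 1$ with the stronger error $O(\lambda^{m+1})$, and, when $\kappa=1$, a degree-two Weierstrass polynomial whose discriminant must vanish to even order because the approximate fixed points $\varphi_m$ exist to arbitrarily high order, producing two holomorphic branches $\varphi_\pm$, one of which serves as $\varphi$. The only point the paper covers that you omit is the degenerate case $\Delta\equiv 0$ (there $\varphi=-u$, and your computation handles it trivially since then $(2\varphi_m+A)^2=O(\lambda^{m+1})$), while your bookkeeping of the exponent ($m_0=3\ell$, error $O(\lambda^{m+1-\ell})$, branch independent of $m$ via comparison of consecutive $m$) is actually more explicit than what the paper writes down.
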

\begin{proof}
In the case $\kappa=Dg^q(a_1)\not=1$, we simply take $\varphi(\lambda)$ so that $\varphi(\lambda)+a_1$ is the fixed point of $G_{c_1+\lambda}^q$ obtained as analytic continuation of $a_1$. It is easy to check that (\ref{eqn:varphimvarphi}) holds for all $m\ge 1$ with an even better error term: $O(\lambda^{m+1})$ instead of $O(|\lambda|^{m/3})$.

Now assume $\kappa=1$ and consider the map $\Phi(\lambda, z)= G_{c_1+\lambda}^q (z+a_1)-z-a_1$ which is holomorphic in a neighborhood of the origin in $\C^2$. Clearly, $\Phi(0,z)$ is not identically zero, so by Weierstrass' Preparation Theorem, %locally the zero set of $\Phi(\lambda, z)$ coincide with zero set of
there is a Weierstrass polynomial $$Q(\lambda,z)=z^2+ 2u(\lambda) z+ v(\lambda)=(z+u(\lambda))^2+v(\lambda)-u(\lambda)^2$$
such that $\Phi(\lambda, z)=Q(\lambda, z) R(\lambda, z)$, where $R$ is a holomorphic function near the origin and $R(0,0)\not=0$, and $u, v$ are holomorphic near the origin in $\C$ with $u(0)=v(0)=0$. Consider the discriminant $\Delta(\lambda)=u(\lambda)^2-v(\lambda)$ which satisfies $\Delta(0)=0$.
By (\ref{eqn:varphim}), for each $m$, %\marginpar{$-\Delta (\lambda)$!!}
\begin{equation}\label{eqn:varphim1}
(\varphi_m(\lambda)+u(\lambda))^2 -\Delta (\lambda)=O(\lambda^{m+1}).
\end{equation}
%The Taylor series expansion of the first term on the left hand side has a leading term with even power. The claim follows.

Let us distinguish two cases.

{\em Case 1.} $\Delta(\lambda)\equiv 0$ (for $\lambda\in \D_\eps$ for some $\eps>0$). Then $-u(\lambda)$ is the only zero of $Q(\lambda, z)$ near $0$. By (\ref{eqn:varphim1}), $\varphi_m(\lambda)+u(\lambda)=O(|\lambda|^{(m+1)/2})$. So the claim holds with $\varphi=-u$.

{\em Case 2.} $\Delta(\lambda)\not\equiv 0.$ Then there is $n_0\ge 1$ and $A\not=0$ such that $\Delta(\lambda)= A\lambda^{n_0}+O(\lambda^{n_0+1})$.
Assume $m\ge n_0$. %\marginpar{What is $n_0$?}
By (\ref{eqn:varphim1}), %\marginpar{$+A$!!}
%\begin{equation*}%\label{eqn:varphim1}
%(\varphi_m(\lambda)+u(\lambda))^2 +\Delta (\lambda)=O(\lambda^{m+1}).
%\end{equation*}
%Thus \marginpar{$z$ to $\lambda$ corrected}
$$(\varphi_m(\lambda)+u(\lambda))^2 =A\lambda^{n_0} +O(\lambda^{n_0+1}),$$
which implies that $n_0$ is even.
Therefore, there exists holomorphic functions $\varphi_{\pm}(\lambda)$ %\marginpar{change:$\varphi_{\pm}(\lambda)$!!}
such that
$Q(\lambda, z)=(z-\varphi_+(\lambda))(z-\varphi_-(\lambda))$.
The lemma holds for either $\varphi(\lambda)=\varphi_-(\lambda)$ or $\varphi(\lambda)=\varphi_-(\lambda)$.
\end{proof}

\begin{proof}[Completion of the proof of Theorem C]
%First, we may assume that the holomorphic motion $H_\lambda^{m}$ given by Theorem~\ref{thm:asymptinv1} (a) satisfies %$H_\lambda^m(c_1)=c_1+\lambda$. (We then might have lost the statement (b), but actually not as shown below.)

We want to show that $\sigma(\lambda):=DG_{c_1+\lambda}^ q( \varphi(\lambda)+a_1)$ is constant. Arguing by contradiction, assume that this is not the case. Then there exists $m_1\ge 1$ and $A\not=0$, such that
$$\sigma(\lambda)-\sigma(0)=3A\lambda^{m_1}+O(\lambda^{m_1+1}).$$
%Thus there exists $r_0>0$ such that for each $r\in (0,r_0)$ there is $\lambda(r)\in \partial \D_\eps$  such that $|DG_{c_1+\lambda(r)}^q (\varphi(\lambda(r)))|< 1-2|A|r^{k_0}.$ By the Koebe Distortion Theorem, there exists %$\rho>0$ such that for each $|DG_{c_1+\lambda(r)}^q (z)|< 1- Ar^{k_0}$ for all $z\in B(\varphi(\lambda(r)), \rho r^{k_0})$.
Fix $m\ge \max(3(m_1+1),m_0)$. There exists $\eps'=\eps_m'>0$ such that the following hold:
\begin{itemize}
\item There exist admissible holomorphic motions $h_{\lambda,m}^{k}$ of $\overline{P}_{r_0}$ over $\D_{2\eps'}$, $k=0,1,\ldots,$ such that $h_{\lambda,m}^{0}=h_{\lambda,m}$, $h_{\lambda,m}(c_1)=c_1+\lambda$ and such that $h_{\lambda,m}^{k+1}$ is the lift of $h_{\lambda,m}^k$ for each $k$, and the
 sequence $\{h_{\lambda,m}^{k}(x)\}$ is uniformly bounded in $(\lambda,x)\in
    \D_{2\eps'}\times \overline{P}_{r_0}$.
\end{itemize}
Thus, there exists $C=C_m>0$ such that whenever $|\lambda|<\eps'$ and $n\ge 0$, %\marginpar{added \\ numbered}
\begin{equation}\label{eq:asymp1}
|h_{\lambda,m}^{n} (c_1)-c_1-\lambda|\le C |\lambda|^{m+1},
\end{equation}
\begin{equation}\label{eq:asymp2}
|h_{\lambda,m}^{n} (a_1)-\varphi_m(\lambda)-a_1|\le C|\lambda|^{m+1}.
\end{equation}
By Lemma~\ref{lem:61}, enlarging $C$ if necessary, we have
\begin{equation}\label{eq:asymp3}
|\varphi(\lambda)-\varphi_m(\lambda)|\le C|\lambda|^{m/3}\le C|\lambda|^{m_1+1}.
\end{equation}
Put $$\mathcal{G}_\lambda^{(n)}(z)=G_{h_{\lambda,m}^{n}(c_1)}\circ G_{h_{\lambda,m}^{n+1}(c_1)}\circ \cdots \circ G_{h_{\lambda,m}^{n+q-1}(c_1)}(z).$$
Then $h_{\lambda,m}^{(n)}(g^q(z))=\mathcal{G}_\lambda^{(n)}(h_{\lambda,m}^{(n+q)}(z))$
and the sequence $\{\mathcal{G}_\lambda^{(n)}(z)\}$ is uniformly bounded in $\lambda\in\D_{\eps'}$, $z\in B(a_1,\delta_0)$ for some $\delta_0>0$.
By the lifting property (\ref{eq:asymp1}),(\ref{eq:asymp2}), (\ref{eq:asymp3}) %\marginpar{added!!},
enlarging $C$ further, we have
$$|D(\mathcal{G}_{\lambda}^{(n)})(h_{\lambda,m}^{n+q}(a_1))-\sigma(\lambda)|\le C |\lambda|^{m_1+1}.$$
for each $n\ge 0$.
It follows that there exists $\eps''>0$ such that whenever $|\lambda|\le \eps''$, we have
$$|D\mathcal{G}_{\lambda}^{(n)}(h_{\lambda,m}^{n+q}(a_1))-(\sigma(0)+3A\lambda^{m_1})|=O(|\lambda|^{m_1+1}).$$
Therefore, we can choose $\lambda\not=0$ with $|\lambda|$ arbitrarily small and such that
$$|D\mathcal{G}_{\lambda}^{(n)}(h_{\lambda,m}^{n+q}(a_1))|<|\sigma(0)|-2|A||\lambda|^{m_1}<|\sigma(0)|=|\kappa|$$
holds for every $n$. We fix such a choice of $\lambda$ now.

Let $\delta>0$ be a small constant such that for each $z$ and each $n\ge 0$ with $|z-h_{\lambda,m}^{n+q}(a_1)|<\delta$,
$$|D\mathcal{G}_\lambda^{(n)}(z)|<|\sigma(0)|-|A||\lambda|^{m_1}=: \kappa'<|\sigma(0)|=|\kappa|.$$
Let $l_0>0$ be large enough such that for each positive integer $l\ge l_0$ and any $n\ge 0$, $|h_{\lambda,m}^{n} (a_1)- h_{\lambda,m}^{n} (c_{lq+1})|<\delta$.
Then for any $l\ge l_0$, and any $n\ge 0$, %\marginpar{$nq$ to $n+q$ below?}
\begin{multline*}
|h_{\lambda,m}^{n}(a_1)- h_{\lambda,m}^{n} (c_{(l+1)q+1})|
= |\mathcal{G}_{\lambda}^{(n)}(h_{\lambda,m}^{n+q} (a_1))-\mathcal{G}_{\lambda}^{(n)} (h_{\lambda,m}^{n+q}(c_{lq+1})|\\
\le  \kappa' |h_{\lambda,m}^{n+q} (a_1)-h_{\lambda,m}^{nq} (c_{lq+1})|.
\end{multline*}
It follows that
\begin{equation}\label{eqn:hlambdama1}
|h_{\lambda,m}^{0} (a_1)- h_{\lambda,m}^{0} (c_{lq+1})|=O({\kappa'}^l).
\end{equation}

%\marginpar{changed}
Let us now distinguish two cases to complete the proof by deducing a contradiction.

{\em Case 1.} $|\kappa|<1$. In this case, $P_{r_0}$ contains a neighborhood of $a_1$, so by the admissible property of $h_{\lambda, m}$, we have $|h_{\lambda,m}^0(a_1)-h_{\lambda,m}^{0} (c_{lq+1}|\asymp |a_1-c_{lq+1}|\asymp |\kappa|^l$.  This leads to $|\kappa|<|\kappa'|$, a contradiction!

{\em Case 2.} $|\kappa|=1$. In this case, $c_{lq+1}$ converges to $a_1$ only polynomially fast. However, since $h_{\lambda,m}$ is qc and hence bi-H\"older, (\ref{eqn:hlambdama1}) implies that $c_{lq+1}$ converges to $a_1$ exponentially fast, a contradiction!
%Since this holds for a sequence of $\lambda$ converging to zero, and $h_{\lambda,m}$ extends to a $K(\lambda)$-qc map of $\C$ with $K(\lambda)\to 1$, it follows that
%$$|c_{lq+1}-a_1|=O(\nu^l)\text{ as } l\to\infty$$
%for each $|\nu|>\kappa'$, which implies that $|\kappa|=|Dg^q(a_1)|\le \kappa'$, absurd!
\end{proof}

\section{Applications to transversality for complex families} \label{sec:complex}

In this section we will show that
Theorems~\ref{thm:Main2} and \ref{thm:Main3} follow from the Main Theorem.
First we show that the attracting periodic cycle contains the singular value in its immediate basin of attraction.
%Indeed,
%assume $(g,G)_W$ is as in these two theorems.

\iffalse
\begin{remark}\marginpar{added remark}
If $g\colon U_g\to V_g$ is as in Theorem~\ref{thm:Main2} then by assumption $g^{-1}(U_g)\subset U_g$.
If $g=bf(z)$ is as in Theorem~\ref{thm:Main3} then by assumption
$f\in \mathcal{E}$ (resp. $f\in \mathcal{E}_o$), and so $g\colon D\to V_b=bV$ has no singular values in $V_b\setminus \{0,b\}$
(resp. in $V\setminus \{0,\pm b\}$) and $b\in D_+$. By assumption  (c) in the definition of $\mathcal{E},\mathcal{E}_o$,
for any any $u\in D\setminus \{0\}$ with $u\ne b$,  we have that $u/b\in V$.
Since $u/b\notin  \{0,1\}$ this implies  $f^{-1}(u/b)\in D\setminus \{0\}$ when $f\in  \mathcal{E}$
and so in this case $g^{-1}(D\setminus \{0,b\})\subset D\setminus \{0\}$.
Similarly $g^{-1}(D\setminus \{0,\pm b\})\subset D\setminus \{0\}$ when $f\in \mathcal{E}_o$.
Note that the assumption that $w\in D_+$ is used in this argument. If  $D$ consists of just one component
then by definition $D_+=D$.
\end{remark}
\fi
%Let $A_0$ be in the basin of the periodic orbit $\mathcal O$. In particular, $A_0\subset D\setminus \{0\}$.
%If $b \notin A_0$, then $b \ne  u$ for any $u \in A_0$. So by assumption (c)
% the set $A_0/b\subset V\setminus \{1\}$ and therefore  $f_b^{-1}(A_0) \subset  D$.

Let $\R_+=(0,\infty)$.
\begin{prop} \label{prop:c1toa0}
%~\marginpar{deleted (iv) (v)\\ not clear of \\
%non-degeneracy}
Consider one of the following three situations:
\begin{enumerate}
\item [(i)] $g(z)=f(z)+c_1$ with $f\in\mathcal{F}$, $c_1\in\overline{D}$, and $\mathcal{O}$ is an attracting or parabolic cycle of $g$ with multiplier $\kappa\not=0$;
\item [(ii)] $g(z)= c_1 f(z)$ with $f\in\mathcal{E}$, $c_1\in D^+\setminus \{0\}$ and $\mathcal{O}\subset D\setminus \{0\}$ is an attracting or parabolic cycle of $g$ with multiplier $\kappa\not=0$;
\item [(iii)] $g(z)= c_1 f(z)$ with $f\in\mathcal{E}_o$, $c_1\in D^+\setminus \{0\}$ and $\mathcal{O}\subset D\setminus \{0\}$ is an attracting or parabolic cycle of $g$ with multiplier $\kappa\not=0$ but $\mathcal{O}\not=-\mathcal{O}$;
  % \marginpar{S: avoid def $b$}
\end{enumerate}
%Let $g$ be a map as in  Theorem~\ref{thm:Main2} or in Theorem~\ref{thm:Main3} with a
%hyperbolic attracting or parabolic periodic orbit $\mathcal{O}=\{a_0, a_1, \ldots, a_{q-1}\}\not\ni 0$.
Then
\begin{enumerate}
\item there is a simply connected open set $W$ with the following property:
\begin{itemize}
\item $g^{kq}$ is univalent on $W$ for each $k\ge 1$,
\item $g^{kq}$ converges uniformly on $W$ to a point in $\mathcal{O}$;
\item $W\ni c_1$ in cases (i) and (ii), while $W\ni c_i$ or $W\ni -c_i$ in case (iii).
\end{itemize}%sequence $c_{n}:=g^{n-1}(c_1)$, $n=1,2,\dots$, converges to $\mathcal{O}$ in case (i), (ii), (iv) and %(v), and it converges to either $\mathcal{O}$ or $-\mathcal{O}$ in case (iii);
\item if $\mathcal O$ is a parabolic periodic orbit then it is non-degenerate.
%\item If $f$, $c_1$ are real, then there exists $a_0\in\mathcal{O}$ such that $g^{kq}$ is monotone on the %closed interval bounded by $a_0$ and $c_1$
\end{enumerate}
\end{prop}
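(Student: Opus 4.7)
The plan is to combine the classical Fatou theorem on singular values in immediate basins with a controlled pullback construction, and then to apply the same singular-value counting to cycles of Fatou components to rule out degeneracy of the parabolic cycle. First I would enumerate the singular values of $g$ using the unbranched-covering hypothesis: $\{c_1\}$ in case (i), $\{0,c_1\}$ in case (ii), and $\{0,\pm c_1\}$ in case (iii). In (ii) and (iii) the point $0$ is a fixed point of $g$ (since $f(0)=0$), so the orbit of the singular value $0$ remains at $0$ and cannot reach $\mathcal{O}\subset D\setminus\{0\}$; in (iii) the oddness of $f$ gives $g(-z)=-g(z)$, so $-\mathcal{O}$ is an attracting or parabolic cycle of the same multiplier, disjoint from $\mathcal{O}$ by the hypothesis $\mathcal{O}\neq -\mathcal{O}$, and it absorbs one of $\pm c_1$. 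Fatou's theorem then implies that some singular value of $g$ is attracted to $\mathcal{O}$, forcing this singular value to be $c_1$ in cases (i), (ii), and one of $\{c_1,-c_1\}$ in case (iii). For clarity I describe the construction assuming $c_1$ is attracted to $\mathcal{O}$ (the other subcase of (iii) being symmetric under $z\mapsto -z$), and I set $a=\lim_{n\to\infty}c_{nq+1}\in\mathcal{O}$.

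Next I would construct $W$ by pulling back a nice neighbourhood $B$ of $a$ along the finite orbit segment $c_1\mapsto\cdots\mapsto c_{n_0 q+1}$, where $n_0$ is chosen so that $c_{n_0 q+1}\in B$. I take $B$ to be a small round disk around $a$ in the Koenigs coordinate in the attracting case, and an attracting petal of $g^q$ containing the tail $\{c_{nq+1}\}_{n\ge n_0}$ in the parabolic case; in either case $B$ is simply connected, $g^q(B)\subset B$ with $g^q|_B$ univalent, and $g^{kq}|_B\to a$ uniformly. I further shrink $B$ so that it is disjoint from the finite set $\{0\}\cup\{c_k,-c_k:1\le k\le n_0 q\}$ and from $-\mathcal{O}$, which is possible since none of these finitely many points equals $a$; for the $-c_k$ one uses that $c_k\in -\mathcal{O}$ would force the whole orbit of $c_1$ to remain in $-\mathcal{O}$, contradicting $c_{nq+1}\to a\in\mathcal{O}$. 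Setting $W_0=B$ and, inductively for $1\le k\le n_0 q$, taking $W_k$ to be the connected component of $g^{-1}(W_{k-1})$ containing $c_{n_0 q+1-k}$, the key observation is that every $W_{k-1}$ with $k\le n_0 q$ avoids all singular values of $g$: $c_1\in W_{k-1}$ would mean $c_k\in B$ which fails by construction, and analogous statements hold for $-c_1$ (via $g^{k-1}(-c_1)=-c_k$) and for $0$ (since $g(0)=0\notin B$). The unbranched-covering hypothesis then yields that each $W_k$ is simply connected and $g:W_k\to W_{k-1}$ is univalent. Setting $W:=W_{n_0 q}$ gives a simply connected set with $c_1\in W$ and $g^{n_0 q}:W\to B$ univalent; univalence of $g^{kq}|_W$ for every $k\ge 1$ follows, for $k\le n_0$ by composing the univalent pullbacks $g:W_j\to W_{j-1}$, and for $k\ge n_0$ from the factorisation $g^{kq}|_W=g^{(k-n_0)q}|_B\circ g^{n_0 q}|_W$ combined with univalence of $g^q|_B$; uniform convergence $g^{kq}|_W\to a$ descends from the corresponding statement on $B$.

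For (2) I would argue by contradiction. Suppose $\mathcal{O}$ is a degenerate parabolic cycle with multiplier $e^{2\pi i l/p}$, and put $G=g^{pq}$, so $G'(a)=1$; the lowest-order non-vanishing term of $G(z)-z$ at $a$ has degree $m+1$, where $m$ is forced to be a positive multiple of $p$ by the commutation of $G$ with $g^q$ (which acts near $a$ as a rotation by $e^{2\pi i l/p}$), and degeneracy means $m=kp$ with $k\ge 2$. By the Leau--Fatou Flower Theorem, $G$ has $kp$ attracting petals at $a$, and the rotation by $e^{2\pi i l/p}$ splits them into $k$ distinct orbits of length $p$ under $g^q$; each such orbit corresponds to a distinct cycle of Fatou components in the immediate basin of $\mathcal{O}$ under $g$. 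Fatou's theorem demands a singular value of $g$ in the basin of each of these $k$ cycles, forcing $k$ singular values to be attracted to $\mathcal{O}$ and contradicting the bound of at most one established in the first paragraph; hence $k=1$ and $\mathcal{O}$ is non-degenerate. The principal technical point throughout is the pullback step: one must certify a priori that the finitely many intermediate sets $W_{k-1}$ avoid every singular value of $g$, which is arranged by shrinking $B$ to miss the finite set $\{c_k,-c_k\}_{k\le n_0 q}$ together with $-\mathcal{O}$.
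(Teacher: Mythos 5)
Your argument rests on invoking the classical Fatou theorem (``every attracting or parabolic cycle attracts a singular value'') as a black box, and that is precisely where it breaks down: $g$ here is not a rational or entire map but a map defined only on a subdomain $D\subsetneq\C$, a covering onto (part of) its range, and for such partially defined maps the Fatou theorem is simply false in general --- e.g.\ $z\mapsto \kappa z$ on a small disc is an unbranched covering onto its image with an attracting fixed point and no singular value whatsoever in the basin. The content of Proposition~\ref{prop:c1toa0}(1) \emph{is} a Fatou-type theorem for the classes $\mathcal{F}$, $\mathcal{E}$, $\mathcal{E}_o$, so your first paragraph essentially assumes the conclusion. The reason the statement is nevertheless true is the separation hypotheses --- condition (4) in the definition of $\mathcal{F}$ ($V\supset B(0;\diam D)\supset D$) and condition (c) in the definitions of $\mathcal{E},\mathcal{E}_o$ ($u/v\in V$) --- which you never use. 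The paper's proof uses exactly these to establish the key observation that for any connected open $B\subset D\setminus(E\cup\{0\})$ (with $E$ the relevant singular values), $g\colon g^{-1}(B)\to B$ is an unbranched covering \emph{and} $g^{-1}(B)\subset D\setminus\{0\}$, so the pullback can be iterated; then either some pullback $A_N$ of the Koenigs disc/petal meets $E$ (and $W=A_N$ works), or all pullbacks avoid $E$ and the Koenigs/Fatou coordinate extends along them to a univalent map onto all of $\C$, which is absurd. Your finite pullback construction in the second paragraph has the same lacuna in miniature: to conclude that $g\colon W_k\to W_{k-1}$ is a covering onto $W_{k-1}$ (hence univalent) you must know that $W_{k-1}$ lies inside the range of the covering, and that each pullback stays in $D\setminus\{0\}$ so that the next pullback is possible; both facts again come from the separation property, not from the unbranched-covering hypothesis alone.

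The same black box undermines part (2): you need a singular value in (each cycle of petals of) the basin, which is again the unproved Fatou-type statement. The paper instead gets non-degeneracy for free from its own pullback argument --- running it with $A_0$ an arbitrary attracting petal shows that the orbit of $c_1$ (resp.\ of $c_1$ or $-c_1$ in case (iii), where $\mathcal{O}\ne-\mathcal{O}$ and oddness guarantee only one of $\pm c_1$ can be attracted to $\mathcal{O}$) must enter \emph{every} attracting petal, which is impossible if there were two or more petal cycles. So the combinatorial counting in your part (2) is fine in spirit, but it needs to be fed by the covering-plus-separation argument rather than by a citation; as written, the crux of both parts is assumed rather than proved.
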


\newcommand{\E}{E}
\begin{proof}
We shall prove this proposition by the classical Fatou argument. Let $\E=\{c_1\}$ in case (i) and (ii) and let $\E=\{\pm c_1\}$ in case (iii).

%We shall prove that there is a topological disk $W$ such that $f^{kq}|W$ is univalent for each $k\ge 1$, %and $W\cap E$ consists of a single point.

Note that the assumption implies that $0\not\in \mathcal{O}$ in all cases.
Take a Jordan disk $A_0\subset D\setminus \{0\}$ along with a univalent map $\varphi:A_0\to \C$ (Koenigs or Fatou coordinate) so that $g^{q'}(A_0)\subset A_0$ where (i) in attracting case: $a_0\in A_0$, $q'=q$, $\varphi(A_0)=\D_\rho$ for some $\rho>0$ and $\varphi\circ g^q=\kappa \varphi$ on $A_0$, (ii) in the
parabolic case: $a_0\in \partial A_0$, $q'=rq$ for a minimal $r\ge 1$, $\varphi(A_0)=\{z: \Re(z)>M\}$ for some $M>0$ and
$\varphi\circ g^{q'}=\varphi+1$ on $A_0$.

In each case, we observe that for any connected open set $B\subset D\setminus (E\cup\{0\})$, $g: g^{-1}(B)\to B$ is an un-branched covering and $g^{-1}(B)\subset D\setminus \{0\}$.
Here we use that in case (ii), (iii) that $c_1\in D_+$ since this implies by  assumption  (c) in the definition of $\mathcal{E},\mathcal{E}_o$,
that $D/{c_1}\subset V$.

Let $A_n$, $n=1,2,\ldots$, denote the component of $g^{-n}(A_0)$ with $A_{q'k}\supset A_0$ for each $k=1,2,\ldots$ and $g(A_n)\subset A_{n-1}$.
There exists a minimal $N\ge 0$ such that %\marginpar{L25/12: $A_N\cap E'\not=\emptyset$ apriori?}
$A_N\cap E\not=\emptyset$. Indeed, otherwise, since $A_0$ is a simply connected open set contained in $D\setminus (E\cup\{0\})$, we obtain from the observation above by induction that
$A_n\subset D\setminus (E\cup\{0\})$ and $g: A_n\to A_{n-1}$ is an un-branched covering for each $n\ge 1$. It follows that $g^{kq'}: A_{kq'} \to A_0$ is an un-branched covering, hence a conformal map for each $k=1,2,\ldots$. Thus $\varphi$ extends to a univalent function from $A=\bigcup_{k=0}^\infty A_{kq}$ onto $\C$ via the functional equation $\varphi(f^{q'}(z))=\kappa \varphi(z)$ or $\varphi(f^{q'}(z))=\varphi(z)+1$, which implies that $A=\C$, contradicting with $E\cap A=\emptyset$. Taking $W=A_N$ completing the proof of (1).

Let us now prove (2). So assume that $\mathcal{O}$ is parabolic. In case (i) and (ii), as the argument above shows that each attracting petal around $\mathcal{O}$ intersects the orbit of $c_1$, the cycle is non-degenerate. Assume now that we are in case (iii). Then either $c_1$ or $-c_1\in W$. Since the map $g$ is odd and $\mathcal{O}\not=-\mathcal{O}$, only one of $c_1$ and $-c_1$ is contained in the basin of attraction of $\mathcal{O}$, and thus the statements (2) hold for the same reason as before.
\iffalse
We are left to prove the statement (3) in case (iv) or (v) or case (i) but with $f$ and $c_1$ real.
Note that if we are case (iv) or (v) but $c_1\not\in D_+$, then $c_1\in (0,c]$. Since
$g|[0,c]$ is increasing, $c_n=g^n(c)$ converges to an attracting or parabolic fixed point. It follows that $\mathcal{O}$ is this fixed point,  and that $c_1$ is in the basin of
$\mathcal{O}$. So it remains to prove (3) assuming that we are case (i)-(iii) but with $c_1$, $f$ real. In this case, the simply connected domain can be taken symmetric with respect to the real line.
\fi
\end{proof}

\begin{proof}[Proof of Theorems~\ref{thm:Main2} and~\ref{thm:Main3}]
The proposition above implies that  $g$ (restricted to a suitable small domain $U$) is a marked map  w.r.t. $c_1$.
Choosing $r_0>0$ small enough, we have $P_{r_0}$ is compact subset of $U$.
Then, as is in shown in \cite{LSvS1,LSvS1a}
the lifting property~\ref{def:liftingproperty} holds. Indeed,  let $h^{(0)}_\lambda:=h_\lambda \colon P_{r_0}\to \C$, $\lambda\in \D_\epsilon$ be a holomorphic motion.
As is shown in \cite{LSvS1,LSvS1a}, one can define a sequence of holomorphic motions $h^{(n)}_\lambda$
as in (2) of Definition~\ref{def:liftingproperty} so that properties (1),(3) also hold. So $(g,G)_W$ has the
lifting property for the set $P_{r_0}$. In particular,  the first parts of Theorems~\ref{thm:Main2}-\ref{thm:Main3} follow
from the Main Theorem.
\end{proof}

%The last part follows from the following proposition:
\section{Application to families of real maps}\label{sec:realmaps}

%~\marginpar{This part need improvement}

%\marginpar{S: new proposition}
In this section, we shall prove Theorem~\ref{thm:eremenko}. To this end, we shall need the result in~\cite{LSvS1,LSvS1a} % Indeed, it was proved there that
to determine the sign of $\kappa'$ and $Q$ in the transversality inequalities in Theorems~\ref{thm:Main2} and~\ref{thm:Main3}.

Throughout let $f_t$ be a family as in the assumption of Theorem~\ref{thm:eremenko}, case (i) or (ii). The case (iii) can be easily reduced to case (i). Put $c=0$ in case (i), so that $c$ is the common turning point of $f_t$. For $t< c$, $f_t$ has no periodic point of period greater than one, so in the following, we shall mainly concerned with
$$t\in J_+:=(c,\infty)\cap J.$$ The following is an immediate consequence of Proposition~\ref{prop:c1toa0}.
\begin{prop}\label{prop:realc12a0}
Suppose that $f_{c_1}$, $c_1\in J^+$, has an attracting or parabolic cycle $\mathcal{O}\subset J$ and let $a_0$ be the rightmost point in $\mathcal{O}$. Then $f^{kq}$ is monotone on $[a_0,c_1]$ for all $k\ge 0$ and $f^kq$ converges uniformly on $[a_0, c_1]$ to $a_0$ as $k\to\infty$.
\end{prop}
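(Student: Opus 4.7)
The plan is to obtain Proposition~\ref{prop:realc12a0} as the real restriction of Proposition~\ref{prop:c1toa0}. First, I would apply Proposition~\ref{prop:c1toa0} to $g=f_{c_1}$, yielding a simply connected open set $W\subset\C$ containing $c_1$ on which every $g^{kq}$ is univalent and $g^{kq}|_W$ converges uniformly to a point of $\mathcal{O}$. Since $g^{kq}(c_1)=c_{kq+1}$, this limit point must be the $f^q$-attractor of $c_1$, which under the rightmost-point convention (together with the classical fact that for a real unimodal map with maximum at $c$, the critical value $c_1$ lies in the immediate real $f^q$-basin of the rightmost cycle point) equals $a_0$.

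Next, since $g$ is real and $\mathcal{O}\subset\R$, I would carry out the construction of $W$ in the proof of Proposition~\ref{prop:c1toa0} symmetrically under complex conjugation: the initial Jordan disk or attracting petal $A_0$ at $a_0$ can be chosen symmetric, and every pullback $A_n$ then inherits that symmetry, so $W=A_N$ is symmetric. A standard Riemann-map argument shows that a simply connected domain symmetric under $z\mapsto\bar z$ meets $\R$ in a single real open interval, so $I:=W\cap\R$ is such an interval containing $c_1$. On $I$ each $g^{kq}$ is a strictly monotone real map and $g^{kq}|_I\to a_0$ uniformly.

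The crux is to show that $[a_0,c_1]\subset\overline{I}$, after which continuity on the real line propagates both the monotonicity and the uniform convergence from $I$ to the closed interval $[a_0,c_1]$ (the case $k=0$ being trivial). For this I would prove $a_0\in\overline{W}$ by inspecting the specific $W=A_N$: the minimality of $N$ together with $c_{N+1}\in A_0$ forces $N$ to be a multiple of the effective period $q'$. In the attracting case $q'=q$ and this is immediate from the fact that $c_n\to a_0$ only along $n\equiv 1\pmod{q}$; in the parabolic case $q'=rq$ it follows by tracking the cyclic action of $g^q$ on the $r$ attracting petals at $a_0$ and choosing $A_0$ as the petal eventually visited by the orbit of $c_1$. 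The nesting property $A_{q'k}\supset A_0$ from the construction of Proposition~\ref{prop:c1toa0} then gives $W\supset A_0$, hence $a_0\in\overline{A_0}\subset\overline{W}$ (with $a_0\in A_0\subset W$ in the attracting case and $a_0\in\partial A_0$ in the parabolic case).

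The principal obstacle I foresee is the parabolic case with $r>1$: choosing $A_0$ consistently so that the relevant iterate of $c_1$ lands in it requires identifying the correct petal using the real Fatou flower at $a_0$ and the rightmost-point convention. Once this bookkeeping is handled, the argument becomes truly an ``immediate consequence'' of Proposition~\ref{prop:c1toa0} as the paper suggests.
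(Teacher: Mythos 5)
Your overall strategy (complexify via Proposition~\ref{prop:c1toa0}, symmetrize $W$ under conjugation, and restrict to the real trace) is the paper's strategy, but there are two genuine gaps. First, you assert that Proposition~\ref{prop:c1toa0} yields a domain $W$ \emph{containing $c_1$}. In case (iii) (the odd class $\mathcal{E}_o$, which is exactly the case relevant for $f\in\mathcal{E}_{o,u}$, e.g.\ $t\sin x$) the proposition only guarantees $W\ni c_1$ \emph{or} $W\ni -c_1$, and a priori only $-c_1$ may be attracted to $\mathcal{O}$ while $c_1$ is attracted to $-\mathcal{O}$. The paper's proof spends half its length excluding this: if only $-c_1\in W$, the real interval of convergence would be bounded by $a_0>0$ and $-c_1<0$, hence would contain $0$; since $f(0)=0$ is fixed, it cannot converge to $a_0$, a contradiction. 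Your proposal never addresses this alternative, and nothing in your argument rules it out.

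Second, your mechanism for the "crux" step, namely that the minimal entry time $N$ is a multiple of $q'$ so that $W=A_N\supset A_0$ and $a_0\in\overline{W}$, is circular as justified. The claim that "$c_n\to a_0$ only along $n\equiv 1\pmod q$" presupposes that the $f^q$-limit of $c_1$ is the particular cycle point at which $A_0$ was planted and that this point is the rightmost one --- but that is (a special case of) the statement being proved; the "classical fact" you invoke for it is itself not available until one knows the critical value lies in the basin, which in the odd case Proposition~\ref{prop:c1toa0} only gives up to the sign ambiguity above, and for these families it is not classical but precisely the output of the complex Fatou argument. Moreover, in the real parabolic case with multiplier $-1$ one has $q'=2q$ with two petals at $a_0$ that the subsequence $c_{1+kq}$ visits alternately, so the first entry time into a chosen petal is only forced to be $\equiv 0\pmod q$, not $\pmod{q'}$; your petal-selection remark does not repair this. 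The paper avoids this bookkeeping altogether: once $W$ is taken real-symmetric and $c_1\in W$ is secured, the identification of the limit with the rightmost point and the passage to $[a_0,c_1]$ are done by elementary real (interval) arguments rather than by showing $A_0\subset W$.
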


\begin{proof} Let $\tilde{f}$ denote the complex extension in $\mathcal{F}$ or $\mathcal{E}\cup\mathcal{E}_o$. Since $\tilde{f}$ is real symmetric, the simply connected domain $W$ as claimed in Proposition~\ref{prop:c1toa0} can be taken to be symmetric with respect to $\mathbb{R}$. If $c_1\in W$, then the statement follows. If $c_1\not\in W$, then $f\in\mathcal{E}_{o,u}$, $-c_1\in W$ and there exists $a_0\in \mathcal{O}$ such that $f^{kq}$ converges to $a_0$ on the interval $K$ bounded by $a_0$ and $-c_1$. However, $a_0>0$ and $-c_1<0$ so $K\ni 0$. Since $f(0)=0$, this is absurd!
\end{proof}
%In the following, we shall often consider a map $f_{c_1}$, $c_1\in J$, with an   By Proposition~\ref{prop:c1toa0}, such a cycle, if exists, is unique and we shall reverse the %notation $a_0(c_1)$ for rightmost point in the cycle. So if the cycle has multiplier $\kappa=0$ then %$a_0(c_1)=c_1$. If $\kappa\not=0$, then again by Proposition~\ref{prop:c1toa0}, %$f_{c_1}^{kq}|[a_0(c_1),c_1]$ is monotone for all $k\ge 1$ and $f_{c_1}^{qk}$ converges uniformly on %$[a_0(c_1), c_1]$ to $a_0(c_1)$ as $k\to\infty$, where $q$ denote the period of the cycle.

\begin{prop}\label{prop:kappa=0}
If $g:=f_{c_1}$ has a cycle $\mathcal{O}$ with multiplier $0$, and let $\kappa(t)$ denote the multiplier of the attracting cycle of $f_t$ for $t$ close to $c_1$. Then there exists $\eps>0$ such that $\kappa(t)> 0$ for $t\in (c_1-\eps, c_1)$ and $\kappa(t)<0$ for $t\in (c_1, c_1+\eps)$.
\end{prop}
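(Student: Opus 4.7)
The plan is to prove $\kappa'(c_1) < 0$; since $\kappa(c_1) = 0$ and $\kappa$ is real-analytic in $t$ near $c_1$, this immediately yields the stated sign pattern. Because $\kappa(c_1) = 0 \neq 1$, the implicit function theorem applied to $f_t^q(z) = z$ furnishes an analytic continuation $z_*(t)$ of the critical point $c \in \mathcal{O}$, and I set $\kappa(t) := (f_t^q)'(z_*(t))$.

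First I would carry out a direct chain-rule computation. Writing $g := f_{c_1}$ and $p_j := g^j(c)$, differentiating $\kappa(t)$ at $t = c_1$ and using $(g^q)'(c) = 0$ gives
\[
\kappa'(c_1) \;=\; (g^q)''(c) \cdot z_*'(c_1) \;+\; Q'(c),
\]
where $Q(z) := \partial_t f_t^q(z)|_{t=c_1}$. The fixed-point equation yields $z_*'(c_1) = Q(c)$; iterating the chain-rule identity $(g^k)''(c) = g'(p_{k-1})(g^{k-1})''(c)$, valid since $(g^{k-1})'(c) = 0$ for $k \geq 2$, gives $(g^q)''(c) = g''(c) \cdot B$ with $B := \prod_{j=1}^{q-1} g'(p_j)$; and every term in the explicit formula for $Q'(c)$ carries a factor that vanishes at $c$ (either a derivative $(g^{q-i})'(c)$ along the orbit, which contains $g'(c)=0$, or, in case (ii), the factor $L'(c) = f'(c) = 0$). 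Consequently
\[
\kappa'(c_1) \;=\; g''(c) \cdot B \cdot Q(c).
\]

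Next is the sign analysis. In both cases $c$ is a strict local maximum of $f$, so $g''(c) < 0$ and the claim reduces to $B \cdot Q(c) > 0$. In case (i), expanding $Q(c) = \sum_{j=1}^{q} D_j$ with $D_j := (g^{q-j})'(p_j)$ and $D_1 = B$, the chain-rule identity $D_j = B/(g^{j-1})'(c_1)$ rearranges this as
\[
B \cdot Q(c) \;=\; B^2 \cdot \Sigma, \qquad \Sigma \;:=\; \sum_{k=0}^{q-1} \frac{1}{(g^k)'(c_1)},
\]
so the task becomes proving $\Sigma > 0$. An analogous factorisation, with some extra book-keeping because $L$ is no longer constant, applies in case (ii).

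The main obstacle is the positivity of $\Sigma$: the reciprocal derivatives $1/(g^k)'(c_1)$ can have mixed signs depending on how often the critical orbit crosses the critical point, so the result cannot be read off term by term. I plan to proceed via the partial sums $T_k := \sum_{j=0}^{k-1} 1/(g^j)'(c_1)$, which obey $T_1 = 1$ and the recursion $T_{k+1} = T_k + 1/(g^k)'(c_1)$, and to prove $T_q > 0$ by induction on $k$, exploiting the unimodal structure of $g$ on the real line to control signs of the derivative cocycle along the critical orbit and using Proposition~\ref{prop:realc12a0} applied at nearby parameters to obtain the needed contraction estimates in a limiting sense. Once $\Sigma > 0$ is established, the identity $\kappa'(c_1) = g''(c) \cdot B^2 \cdot \Sigma < 0$ combined with $\kappa(c_1) = 0$ and real-analyticity of $\kappa$ yields the stated sign pattern for $t \in (c_1 - \eps, c_1)$ and $t \in (c_1, c_1 + \eps)$.
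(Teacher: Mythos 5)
Your reduction is fine as far as it goes in the nondegenerate case: with $\kappa(c_1)=0$ one indeed gets $z_*'(c_1)=Q(c)$, $Q'(c)=0$, hence $\kappa'(c_1)=g''(c)\,Dg^{q-1}(c_1)\,Q(c)$, and in case (i) $Dg^{q-1}(c_1)Q(c)=\bigl(Dg^{q-1}(c_1)\bigr)^2\Sigma$ with $\Sigma=\sum_{k=0}^{q-1}1/Dg^k(c_1)$. But at this point you have merely restated the crux of the matter: $\Sigma>0$ (equivalently, that $\left.\frac{d}{dt}f_t^q(c)\right|_{t=c_1}$ and $Dg^{q-1}(c_1)$ have the same sign) is exactly the positively oriented transversality inequality \eqref{eq:postrans22}, which the paper does not reprove here but quotes from \cite{LSvS1,LSvS1a}, where it is established with the full transfer-operator/holomorphic-motion/lifting-property machinery. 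Your plan to prove it by induction on the partial sums $T_k$, ``exploiting the unimodal structure'' and ``contraction estimates in a limiting sense,'' is not an argument: the terms $1/Dg^k(c_1)$ have mixed signs, the partial sums need not be positive, and no elementary term-by-term or inductive proof of this positivity is known even for the quadratic family. So the central step of your proof is missing.

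There is also a structural problem with the strategy itself. You aim to prove $\kappa'(c_1)<0$ and then read off the sign change, but the families covered by Theorem~\ref{thm:eremenko} include maps whose critical point is of higher order or even flat (e.g.\ $z^d+c$ with $d>2$, and the $\exp(-1/|x-1/2|^\ell)$ example), and for $\mathcal F_u^{\pm}$ the map is only assumed $C^1$ at the turning point. In those cases $g''(c)=0$ (or does not exist), your formula gives $\kappa'(c_1)=0$, and the conclusion cannot be extracted from the first derivative of $\kappa$ at all, even though the proposition remains true. The paper's proof avoids both issues: it takes \eqref{eq:postrans22} as the cited input, deduces the sign of $f_t^q(c)-c$ for $t$ on either side of $c_1$, locates the attracting fixed point $a(t)$ of $f_t^q$ on the corresponding side of $c$, and reads off the sign of $\kappa(t)=Df_t(a(t))\,Df_t^{q-1}(f_t(a(t)))$ from the unimodality of $f_t$; no second derivatives or analyticity at $c$ are needed. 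To repair your proposal you would have to (a) cite or reprove \eqref{eq:postrans22} rather than sketch it, and (b) replace the computation of $\kappa'(c_1)$ by a sign argument of this geometric type valid for degenerate critical points.
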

\begin{proof} In~\cite{LSvS1}, \cite{LSvS1a} the following inequality was proved
\begin{equation}\frac{ \left.\frac{d}{d t} f_t^q(c)\right|_{t=c_1}}{Dg^{q-1}(c_1)}>0 .
\label{eq:postrans22}
\end{equation}
Let us deduce the conclusion of the proposition. Let $a(t)$ denote the fixed point of $f_t^q$ near $c$ for $t$ close to $c_1$. Assume for definiteness that $Dg^{q-1}(c_1)>0$. Then it follows that there is $\eps>0$ such that $f_t^q(c)>c$ for $t\in (c_1,c_1+\eps)$ and
$f^q_t(c)<c$ for $t\in (c_1-\eps, c_1)$.
Thus for $t\in (c_1,c_1+\eps)$, $a(t)>c$ and for $t\in (c_1-\eps, c_1)$, $a(t)<c$. Reducing $\eps$ if necessary, $Dg_t^{q-1}(g_t(a(t))>0$, so $\kappa(t)= Dg(a(t)) Dg^{q-1}(g_t(a(t))$ has the sign as claimed.
\end{proof}

We say an open subinterval $J_1$ of $J$ is an {\em attracting window} if for each $t\in J_1$, $f_t$ has an attracting periodic cycle $\mathcal{O}_t$ with multiplier $\kappa(t)\in (-1,1)$.
By the Implicit Function Theorem, $\mathcal{O}_t$ and $\kappa(t)$ depending on $t$ in a $C^1$ way.
In particular, the cycles $\mathcal{O}_t$ have the same period, which is called the period of the attracting window.

\begin{lemma}\label{lem:attrwin}
Let $J_1$ be an attracting window of period $q\ge 2$ and let $\kappa(t)$ be the corresponding multiplier function. Then $\kappa(t)$ is strictly decreasing in $J_1$.
\end{lemma}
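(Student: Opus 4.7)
The plan is as follows. First, since $\kappa(t) \in (-1,1)$ never equals $1$ on $J_1$, the Implicit Function Theorem applied to $f_t^q(z) = z$ yields a real-analytic continuation $t \mapsto a_0(t)$ of the attracting periodic point along $J_1$, so $\kappa(t) = Df_t^q(a_0(t))$ is real-analytic on $J_1$.

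Next, at each $c_1 \in J_1$, I would apply Theorem~\ref{thm:Main2} (in case (i)) or Theorem~\ref{thm:Main3} (in case (ii)). Since $\kappa(c_1) \ne 1$, the dichotomy reads: either $\kappa'(c_1) \ne 0$, or there is a persistent cycle of period $q$ with the constant multiplier $\kappa(c_1)$ in a complex neighborhood of $c_1$. In the latter case, restricting the persistent cycle to real parameters in $J_1$ near $c_1$ and using uniqueness of real-analytic continuation, the persistent cycle must coincide with $\mathcal{O}_t$, so $\kappa$ is locally constant at $c_1$; by real-analyticity this gives $\kappa \equiv \kappa(c_1)$ throughout $J_1$. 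Proposition~\ref{prop:kappa=0} excludes constancy whenever $\kappa$ has a zero in $J_1$, since $\kappa$ must strictly change sign there.

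To determine the sign I would argue as follows. If $\kappa$ has a zero $t^* \in J_1$, Proposition~\ref{prop:kappa=0} gives $\kappa > 0$ just to the left of $t^*$ and $\kappa < 0$ just to the right, so $\kappa$ is strictly decreasing through $t^*$. This sign configuration precludes any further zero in $J_1$, because a second zero would again have to be crossed from $+$ to $-$, impossible when approaching it from the negative side. On the two open components of $J_1 \setminus \{t^*\}$, the previous paragraph yields $\kappa' \ne 0$, and $\kappa'$ is continuous there; matching the constant sign of $\kappa'$ on each component with the decreasing direction near $t^*$ gives $\kappa' < 0$ throughout $J_1$.

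The main obstacle is the case when $\kappa$ has no zero in $J_1$: then Proposition~\ref{prop:kappa=0} provides no anchor for the sign, so one cannot immediately exclude $\kappa$ being a nonzero constant or choose the sign of $\kappa'$. I would address this by either extending $J_1$ to a maximal attracting window, which for $q \ge 2$ typically contains a super-attracting parameter where $\kappa$ vanishes (reducing to the previous case via Theorem~\ref{thm:eremenko} and analytic continuation of the cycle), or by invoking the explicit real-variable sign formulas for $\kappa'$ alluded to in the remark following Theorem~\ref{thm:Main3}, which directly give $\kappa'(c_1) < 0$ for attracting cycles in the real families considered.
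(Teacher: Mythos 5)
Your core argument is the paper's: at most one zero of $\kappa$ via Proposition~\ref{prop:kappa=0}, and, once a zero $t^*$ exists, transversality ($\kappa'\ne 0$ away from $t^*$, where Theorems~\ref{thm:Main2} and~\ref{thm:Main3} apply since $\kappa\ne 0$ there) combined with the sign change at $t^*$ forces $\kappa'<0$ on both components of $J_1\setminus\{t^*\}$. (Your detour through a ``persistent constant multiplier'' alternative is harmless but unnecessary: in this real setting the cited theorems conclude transversality outright.) The genuine gap is exactly the case you flag at the end and then do not close: you never prove that $\kappa$ vanishes somewhere in the window, and neither of your fallbacks is available. Invoking Theorem~\ref{thm:eremenko} is circular, since its proof in Section~\ref{sec:realmaps} uses Lemma~\ref{lem:attrwin} (directly and through Propositions~\ref{prop:saddlenode} and~\ref{prop:pd}); and the ``sign formulas'' alluded to in the remark after Theorem~\ref{thm:Main3} are precisely what Section~\ref{sec:realmaps} derives \emph{from} this lemma --- the only independent real sign input in the paper is the inequality~(\ref{eq:postrans22}) at superattracting parameters, i.e.\ Proposition~\ref{prop:kappa=0}, which presupposes a zero of $\kappa$. ``Typically contains a superattracting parameter'' is not an argument.

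The paper closes this case as follows. Assume without loss of generality that $J_1=(t_-,t_+)$ is a \emph{maximal} attracting window (strict decrease on the maximal window implies it on any subwindow, since the multiplier function is the same). One checks $t_-,t_+\in J$, and by maximality $|\kappa(t)|\to 1$ as $t$ tends to either endpoint: if $|\kappa|$ stayed bounded away from $1$ near an endpoint, the Implicit Function Theorem would continue the attracting cycle past it and enlarge the window. If now $\kappa$ had no zero in $J_1$, then $\kappa'\ne 0$ throughout by Theorems~\ref{thm:Main2}--\ref{thm:Main3}, so $\kappa$ would be strictly monotone with boundary limits in $\{-1,+1\}$; monotonicity forces these two limits to be $-1$ and $+1$ in some order, and the intermediate value theorem then produces a zero of $\kappa$ after all --- a contradiction. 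Hence $\kappa$ has a (unique) zero and your main argument applies. Supplying this maximality-plus-boundary-limit step is what your proposal is missing.
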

\begin{proof} Assume without loss of generality that $J_1=(t_-, t_+)$ is a maximal attracting window.
First, by Lemma~\ref{prop:kappa=0}, $\kappa$ can have at most one zero in $J_1$. Indeed, otherwise, let $c_1<\hat{c}_1$ be two consecutive zeros, then it follows that $\kappa<0$ in a right neighborhood of $c_1$ and $\kappa>0$ in a left neighborhood of $\hat{c}_1$, which implies by the intermediate value theorem that there is another zero of $\kappa$ in-between $c_1$ and $\hat{c}_1$, absurd!

Now let us assume that $\kappa(t_0)=0$ for some $t_0\in J_1$. Then $\kappa(t)>0$ for $t\in (t_-, t_0)$ and $\kappa(t)<0$ for $t\in (t_0, t_+)$. By Theorems~~\ref{thm:Main2} and~\ref{thm:Main3}, $\kappa'(t)\not=0$ for each $t\not=t_0$. This forces $\kappa'(t)<0$ for all $t\in J_1\setminus \{0\}$, so $\kappa(t)$ is strictly decreasing in $J_1$.

Finally, let us prove that $\kappa$ does have a zero in $J_1$. Indeed, it is easy to see $t_-,t_+\in J$, so by the maximality of $t_+$, we have $\kappa(t)\to \pm 1$ as $t\nearrow t_{\pm}$. Since $\kappa$ is monotone in $J_1$ (by Theorems~~\ref{thm:Main2} and~\ref{thm:Main3}), the intermediate value theorem implies that $\kappa$ has a zero.
%
%First assume that $\kappa(t)\not=0$ for all $t\in J_1$. Then by Theorems~\ref{thm:Main2} and~\ref{thm:Main3}, $\kappa'(t)\not=0$ for each $t$. It follows that $\kappa(t)$ is monotone.%%
%Now assume that there exists $c_1\in J_1$ with $\kappa(c_1)=0$.
%
%It follows that there exists $\eps>0$ such that $\kappa(t)>0$ for $t\in (c_1-\eps, c_1)$ and $\kappa(t)<0$ for $t\in (c_1+\eps)$. As this argument applies to any $t_0$ with $\kappa(t_0)=0$, $c_1$ is the only zero of the function %$\kappa$. So $\kappa$ is strictly monotone in each component of $J_1\setminus \{c_1\}$ and thus it is strictly decreasing in the whole interval $J_1$.
\end{proof}

\begin{prop}\label{prop:saddlenode}
Assume that for some $c_1\in J_+$, $g=f_{c_1}$ has a parabolic cycle with multiplier $1$. Let $q$ be the period of the cycle and let $a_0$ be the rightmost point in the cycle.
%=a_0(c_1)$ and let $q$ be the period of $a_0$.
Then
there exist $\eps>0$ and $\delta>0$ such that for each $t\in (c_1, c_1+\eps)$, $f_t$ has an attracting cycle of period $q$ and $a_0(t)\to a_0$ as $t\to c_1$, and for each $t\in (c_1-\eps, c_1)$ and $x\in [a_0-\delta, a_0+\delta]$, $f_t^q(x)<x$. Equivalently, for $a_0:=\lim_{k\to\infty} g^{kq}(c_1)$, we have %there exists $a_0\in \mathcal{O}$ such that $c_{kq+1}:=g^{kq}(c_1)\to a_0$ as $k\to\infty$ and such that
$$Q(a_0):=\dfrac{d}{dt} f_t^q(a_0)\bv_{t=c_1}>0.$$
\end{prop}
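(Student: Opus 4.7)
The plan is to deduce both conclusions from the transversality $Q(a_0)\ne 0$ (available from Theorem~\ref{thm:Main2} or Theorem~\ref{thm:Main3}) together with a sign determination that rules out $Q(a_0)<0$ by exploiting the monotonicity established in Lemma~\ref{lem:attrwin}.

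First I would pin down the sign of $D^2g^q(a_0)$. By Proposition~\ref{prop:realc12a0}, $f^{kq}$ is monotone on $[a_0,c_1]$ and converges uniformly to $a_0$, so $c_{kq+1}\in(a_0,c_1]$ with $g^q(c_{kq+1})=c_{(k+1)q+1}<c_{kq+1}$. Hence $g^q(x)-x<0$ for $x$ slightly to the right of $a_0$. Combined with $g^q(a_0)=a_0$, $Dg^q(a_0)=1$, and the non-degeneracy $D^2g^q(a_0)\ne 0$, this forces $D^2g^q(a_0)<0$.

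Next, set $H(t,x):=f_t^q(x)-x$, so that $H(c_1,a_0)=0$, $H_x(c_1,a_0)=0$, $H_{xx}(c_1,a_0)<0$, and $H_t(c_1,a_0)=Q(a_0)$. A two-variable Taylor expansion then shows that the two claims in the proposition are equivalent to $Q(a_0)>0$: if $Q(a_0)>0$ then for $t\in(c_1,c_1+\eps)$ the equation $H(t,x)=0$ has two solutions near $a_0$ of the form $a_0\pm C\sqrt{t-c_1}\,(1+o(1))$, and at the right-hand one $Df_t^q$ is strictly less than $1$ (hence attracting); while for $t\in(c_1-\eps,c_1)$ we have $H(t,\cdot)<0$ uniformly on $[a_0-\delta,a_0+\delta]$. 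Conversely, $H(t,a_0)\le 0$ for $t$ slightly less than $c_1$ forces $Q(a_0)\ge 0$, and then transversality gives $Q(a_0)>0$. It therefore suffices to rule out $Q(a_0)<0$.

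Assume for contradiction that $Q(a_0)<0$. The same local unfolding analysis then produces $\eps>0$ such that for $t\in(c_1-\eps,c_1)$, $f_t$ has an attracting cycle of period $q$ bifurcating from $a_0$ with multiplier $\kappa(t)\in(-1,1)$, and $\kappa(t)\to 1$ as $t\nearrow c_1$. Let $J_1=(t_-,t_+)\subset J$ be the maximal attracting window of period $q$ containing this left-sided neighborhood of $c_1$. If $t_+>c_1$ then $c_1\in J_1$, giving $|\kappa(c_1)|<1$, contradicting that $g$ has a parabolic cycle with multiplier $1$. So $t_+=c_1$; but Lemma~\ref{lem:attrwin} says $\kappa$ is strictly decreasing on $J_1$, which together with $\kappa(t)\to 1$ as $t\to t_+^-$ would force $\kappa(t)>1$ just to the left of $t_+$, violating $\kappa(t)\in(-1,1)$ on $J_1$. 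Hence $Q(a_0)>0$.

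The main obstacle will be the bookkeeping in this last step: I need to verify that the attracting cycle produced by the local saddle-node unfolding is genuinely the cycle whose persistence defines the window $J_1$, so that Lemma~\ref{lem:attrwin} applies to the correct multiplier function $\kappa(t)$ all the way up to $t=c_1^-$. This should follow from the Implicit Function Theorem parametrization of the bifurcating fixed point together with the uniqueness of the nearby periodic cycle of period $q$.
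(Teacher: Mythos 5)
Your proposal is correct and follows essentially the same route as the paper: Proposition~\ref{prop:realc12a0} gives the one-sided approach and $D^2g^q(a_0)<0$, transversality gives $Q(a_0)\neq 0$, the local saddle-node unfolding (which the paper just cites from \cite[Proposition 7.7.5]{BS}) determines on which side of $c_1$ the attracting cycle appears, and the hypothetical case $Q(a_0)<0$ is excluded exactly as in the paper, since Lemma~\ref{lem:attrwin} makes $\kappa$ strictly decreasing on the resulting left-sided attracting window while $\kappa(t)\to 1$ as $t\nearrow c_1$. Your extra case distinction via a maximal window with $t_+>c_1$ is not needed (the paper's contradiction only uses the window $(c_1-\eps,c_1)$ itself), but it is harmless and resolvable by the uniqueness of non-repelling cycles coming from Proposition~\ref{prop:c1toa0}.
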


\begin{proof} By Proposition~\ref{prop:realc12a0}, $c_{kq+1}$ decreases to $a_0$ and $D^2 g^q(a_0)<0$.
By Theorems~\ref{thm:Main2} and~\ref{thm:Main3},
$Q(a_0)\not=0$. So $f_t^q(x)$ displays a saddle-node bifurcation at $(a_0, c_1)$. It is well-known, see for example~\cite[Proposition 7.7.5]{BS} that there exist $\eps>0$ and $\delta>0$ such that for each $t$ in
$$J_1=\left\{\begin{array}{ll}
[c_1, c_1+\eps) &\mbox{ if } Q(a_0)>0,\\
(c_1-\eps, c_1] &\mbox{ if } Q(a_0)<0,
\end{array}
\right.$$
$f_t^q$ has two fixed points $a_-(t)$ and $a_+(t)$, depending continuously in $t$, with $a_-(c_1)=a_+(c_1)=a_0$, and with $0<Df_t^q(a_-(t))<1$ and $Df_t^q(a_+(t))>1$, while for $t$ in
$$J_2=\left\{\begin{array}{ll}
(c_1-\eps, c_1) &\mbox{ if } Q(a_0)>0,\\
(c_1, c_1+\eps) &\mbox{ if } Q(a_0)<0,
\end{array}
\right.$$
$f_t^q(x)<x$ for each $|x-a_0|\le \delta$. %has no fixed point in the $\delta$-neighborhood of $\mathcal{O}$.
So $J_1^o$ is an attracting window, and by Lemma~\ref{lem:attrwin}, the multiplier function $\kappa(t)=Df_t^q(a_-(t))$ is monotone decreasing.
Thus $J_1=[c_1, c_1+\eps)$ and $Q(a_0)>0$.
\end{proof}

\begin{prop}\label{prop:pd}
Assume that $f_{c_1}$ has a parabolic cycle $\mathcal{O}$ of period $q$ and with multiplier $-1$. Then for any positive integer $N\ge 1$, there exist $\eps>0$ and $\delta>0$ such that for each $t\in (c_1-\eps, c_1+\eps)$, $f_t^{2Nq}$ has exactly three fixed points in the $\delta$-neighborhood of $\mathcal{O}$, two of them hyperbolic attracting, and one hyperbolic repelling.
\end{prop}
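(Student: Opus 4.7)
The plan is to apply the Main Theorem to $g = f_{c_1}$ (which is covered by Theorem~\ref{thm:Main2} or \ref{thm:Main3}), extract a period-doubling normal form for $f_t^{2q}$ near $a_0$, and then upgrade from $f_t^{2q}$ to $f_t^{2Nq}$. Since $\kappa = -1 \ne 1$, the analytic continuation $a_0(t)$ of $a_0$ as a fixed point of $f_t^q$ is well-defined near $c_1$; the second alternative of the Main Theorem (a one-parameter family of period-$q$ cycles with constant multiplier $-1$) is incompatible with the real-analytic structure of the family (one applies the Implicit Function Theorem to $(f_t^q)'(a_0(t)) + 1 = 0$ and invokes the strict monotonicity in Lemma~\ref{lem:attrwin}). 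Hence the first alternative gives $\kappa'(c_1) \ne 0$, i.e., $\kappa(t) := (f_t^q)'(a_0(t))$ crosses $-1$ transversally at $t = c_1$.

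For $F_t := f_t^{2q}$, $u := x - a_0(t)$, and $s := t - c_1$, I would Taylor-expand
\[
F_t(a_0(t) + u) - (a_0(t) + u) = A(s)\,u + B(s)\,u^2 + C(s)\,u^3 + O(u^4),
\]
with $A(0) = \kappa(c_1)^2 - 1 = 0$, $A'(0) = -2\kappa'(c_1) \ne 0$, $B(0) = 0$ (automatic: writing $f_{c_1}^q(a_0 + u) = a_0 - u + a u^2 + b u^3 + O(u^4)$ one computes $F_{c_1}(a_0 + u) - (a_0 + u) = -2(a^2 + b) u^3 + O(u^4)$), and $C(0) = -2(a^2 + b) = \tfrac{1}{6} D^3 f_{c_1}^{2q}(a_0) \ne 0$ by the non-degeneracy of $\mathcal{O}$ asserted in Proposition~\ref{prop:c1toa0}(2). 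Factoring out the persistent root $u = 0$ (which recovers $a_0(t)$), the remaining roots satisfy $A'(0)\,s + C(0)\,u^2 + O(s^2, s u, u^3) = 0$; the Weierstrass Preparation Theorem furnishes exactly two further roots $u_\pm(t)$ near $0$ for every $t$ near $c_1$, so $F_t$ has exactly three fixed points in the $\delta$-ball around $a_0$ for every $t\in(c_1-\eps,c_1+\eps)$. Differentiating once more gives $(f_t^{2q})'(a_0(t)) = 1 + A'(0)\,s + O(s^2)$ at the persistent root and $(f_t^{2q})'(a_0(t) + u_\pm(t)) = 1 - 2A'(0)\,s + O(s^{3/2})$ at the bifurcating roots, so both kinds are hyperbolic for $s \ne 0$ with multipliers lying on opposite sides of $1$ in a $2{:}1$ ratio.

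To pass from $F_t$ to $f_t^{2Nq}$, I would rule out extra fixed points arising from $f_t^q$-cycles of minimal period $d \mid 2N$ with $d \ge 3$. For odd $d$, $(f_{c_1}^{dq})'(a_0) = -1 \ne 1$, so the Implicit Function Theorem makes $a_0(t)$ the unique fixed point of $f_t^{dq}$ near $a_0$; for even $d \ge 4$, one reapplies the period-doubling analysis (or shrinks $\delta$) so that additional cycles leave the $\delta$-ball. This has to be done at each of the $q$ points of $\mathcal{O}$, but the same argument applies since $f_{c_1}^q$ permutes the branches equivariantly. The main obstacle I anticipate is the sign bookkeeping needed to realise the precise "two attracting + one repelling" configuration on the \emph{full two-sided} interval: the roots $u_\pm(t)$ are real only on the side of $c_1$ where $-A'(0)/C(0)\cdot s > 0$, and form a complex-conjugate pair on the opposite side. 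On the real side one recovers the classical picture with $a_0(t)$ repelling and $u_\pm(t)$ an attracting $2$-cycle of $f_t^q$; on the complex side the three fixed points live only in $\C$, so the two-sided claim must be read either complex-analytically (with hyperbolicity meaning $|\text{multiplier}| \ne 1$, guaranteed by $A'(0) \ne 0$) or as a one-sided claim in the spirit of Proposition~\ref{prop:saddlenode}, the choice of side being determined by the sign information already encoded in Propositions~\ref{prop:kappa=0}, \ref{prop:saddlenode} and Lemma~\ref{lem:attrwin} for the real unimodal families of Theorem~\ref{thm:eremenko}.
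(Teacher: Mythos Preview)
Your approach is essentially the paper's: both arguments rest on the transversality $\kappa'(c_1)\ne 0$ (from Theorems~\ref{thm:Main2}/\ref{thm:Main3}), the non-degeneracy $D^3g^{2q}(a_0)\ne 0$ (from Proposition~\ref{prop:c1toa0}), and then the standard period-doubling normal form. The paper simply cites~\cite[Proposition 7.7.6]{BS} where you spell out the Taylor expansion and invoke Weierstrass preparation; both are fine.

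Two points where the paper is sharper than your proposal:

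\emph{The sign of $\kappa'$.} The paper actually pins down $\kappa'(c_1)<0$, not just $\ne 0$: if $\kappa'(c_1)>0$ then a small right neighbourhood of $c_1$ would be an attracting window on which the multiplier increases, contradicting Lemma~\ref{lem:attrwin}. Combined with the sign of $C(0)$ forced by $c_{kq+1}\searrow a_0$ (Proposition~\ref{prop:realc12a0}), this determines on which side of $c_1$ the bifurcating pair $u_\pm(t)$ is real and attracting, and so resolves the two-sided concern you raise at the end. Your use of Lemma~\ref{lem:attrwin} is different and somewhat off: you invoke it to rule out the second alternative of the Main Theorem (constant multiplier $-1$), but that alternative puts you \emph{outside} any attracting window, so the lemma does not apply directly. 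It is cleaner to cite Theorems~\ref{thm:Main2}/\ref{thm:Main3} for $\kappa'(c_1)\ne 0$ outright and reserve Lemma~\ref{lem:attrwin} for the sign, as the paper does.

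\emph{The passage to $N>1$.} The paper's argument is much shorter than your divisor-by-divisor analysis. From $g^{2q}(x)=x+C(x-a_0)^3+O((x-a_0)^4)$ with $C\ne 0$ one gets, by iterating, $g^{2qN}(x)=x+NC(x-a_0)^3+O((x-a_0)^4)$, hence $D^3g^{2qN}(a_0)\ne 0$. Thus $f_t^{2qN}(x)-x$ has a zero of order exactly $3$ at $(c_1,a_0)$, so at most three fixed points near $a_0$ for $t$ near $c_1$; since the three fixed points of $f_t^{2q}$ already found are automatically fixed by $f_t^{2qN}$, there are exactly three. No case analysis over $d\mid 2N$ is needed.
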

\begin{proof} Note that the assumption implies that $c_1\in J_+$. For the case $N=1$, this is a well-known fact about periodic doubling bifurcation.  Indeed, by Theorems~\ref{thm:Main2} and ~\ref{thm:Main3}, $\kappa'(c_1)\not=0$, where $\kappa(t)$ is the multiplier of the periodic orbit of $f_t$ of period $q$ near $\mathcal{O}$ for $t$ close to $c_1$. We must have $\kappa'(c_1)<0$ for otherwise, a small right-sided neighborhood of $c_1$ would be an attracting window on which the multiplier function is increasing which is ruled out by Lemma~\ref{lem:attrwin}. On the other hand, by Proposition~\ref{prop:c1toa0}, $D^3 g^{2q}(a)\not=0$ for each $a\in \mathcal{O}$. Thus the statement follows, for example, by  ~\cite[Proposition 7.7.6]{BS}.

The case for general $N$ follows: $D^3 g^{2qN}(a)\not=0$ for $a\in\mathcal{O}$. For reducing $\eps,\delta$ if necessary, $f^{2Nq}$ has at most three fixed points in the $\delta$-neighborhood of $\mathcal{O}$ for $t\in (c_1-\eps, c_1)$.
\end{proof}

\begin{proof}[Proof of Theorem~\ref{thm:eremenko}] Let $\kappa$ be the multiplier the $f_{t_*}$-cycle $\mathcal{O}$.
First let us assume $\kappa\not=1$. Then by the Implicit Function Theorem, there is a maximal $T$ such that $J_1:=[c_1, T)\subset J$ and each $f_t$, $t\in J_1$, has a periodic cycle $\mathcal{O}_t$ which depends on $t$ continuously and such that $\mathcal{O}_t=\mathcal{O}$ and the multiplier $\kappa(t)\not=1$. We shall prove that $T$ is the right endpoint of $J$. Arguing by contradiction, assume that this is not the case. By the maximality of $T$, $\lim_{t\to T} \kappa(t)=1$.
By the intermediate value theorem, we are in one of the following cases:

{\em Case 1.} $\kappa(t)>1$ for all $t\in J_1$. Choose a subsequence $t_n\nearrow T$ such that $\mathcal{O}_{t_n}$ converges to a periodic orbit $\mathcal{O}_T$ of $f_T$. Let $q$ denote the period of $\mathcal{O}$ and $q'$ the period of $\mathcal{O}_T$, then $q=q' m$
for some positive integer $m$.

If $\mathcal{O}_T$ has multiplier $1$, then by Proposition~\ref{prop:saddlenode},  there exist $\eps>0$ and $\delta>0$ such that $f_{t}^{q'}(x)<x$ for each $t\in (T-\eps, T)$ and $|x-a_0(T)|<\delta$, where $a_0(T)$ denotes the rightmost point in the parabolic cycle $\mathcal{O}_T$. By continuity, there exist $\eps',\delta'>0$, we have $f_t^q(x)<x$ when $t\in (T-\eps',T)$ and $|x-a_0(T)|<\delta'$. This is in contradiction with the construction of $\mathcal{O}_T$.

Assume now that $\mathcal{O}_T$ has multiplier $-1$. Then $q=2q'N$ for some integer $N\ge 1$. By Proposition~\ref{prop:pd}, for each $n$ large enough, $f_{t_n}^{q}$ has exactly three fixed points, two of which are attracting. However $\mathcal{O}_{t_n}$ contains at least two repelling fixed point of $f_{t_n}^q$, a contradiction!

{\em Case 2.} $\kappa(t)<1$ for all $t\in J_1$. Then there exists $\eps>0$ such that $(T-\eps, T)$ is an attracting window and by Lemma~\ref{lem:attrwin}, $\kappa(t)$ is strictly decreasing, thus contradicting the maximality of $T$.

So now let us consider the case $\kappa=1$. By Proposition~\ref{prop:saddlenode}, there is $\eps>0$ such that $(c_1, c_1+\eps)$ is an attracting window. By what have proved above, the attracting periodic orbits allow further continuation until the right endpoint of $J$.
\end{proof}
\begin{remark} Figure~\ref{fig1} shows the bifurcation diagram  %\marginpar{S: added remark}
for the family $f_{c_1}(x)=c_1 \sin(x)$, $c_1\in \R$. It also shows that for this family
there are degenerate parabolic bifurcations, which occur when $\mathcal{O}=-\mathcal{O}$.
\end{remark}

\iffalse
\begin{remark} Let us prove the last assertion in  Theorems~\ref{thm:Main2} and \ref{thm:Main3}.
So assume that $g$  is real,  let $c$ be its turning point, let $c_1=g(c)$ and $g=G_{c_1}$.
Let us first consider the case that $g^q(c)=c$.  Let $a(w)$ be the analytic continuation of the fixed point of $G_w^q$ with $a(c_1)=c$,
and let   $\kappa(w)$ be its multiplier.     If $g$ has a minimum at $c$,
$x\mapsto g^q(x)$ has a local maximum (minimum) at $c$ if $Df^{q-1}(c_1)<0$ (resp. $>0$), and reversely when $g$ has a maximum at $c$.
It was proved in
 From all this  it follows that
 $\R\ni w \to \kappa(w)$  is strictly increasing
near $w=c_1$ if $g$ has a local minimum, and is strictly decreasing near $w=c_1$
if $g$ has a local maximum. By the Main Theorem $\kappa'(w)\ne 0$ when $\kappa\in [-1,1)\setminus \{0\}$.
It follows that $\kappa'(w)>0$ (resp. $\kappa'(w)<0$) whenever $\kappa\in [-1,1)\setminus \{0\}$
if $g$ has a local minimum (resp. maximum) at $c$.
As, by  \cite{LSvS2},   the topological entropy is decreasing (resp. increasing) in $w$ when  $g$ has a local minimum (resp. maximum) at $c$,
the assertions about the sign of $\kappa'$ in Theorems ~\ref{thm:Main2} or \ref{thm:Main3} follow.
The assertion about the sign of $Q(a_0)$ when $\kappa=1$ follows from the previous section.
\end{remark}
\fi

\begin{figure}[h!]
\centering
\includegraphics[width=0.9\textwidth,height=7cm]{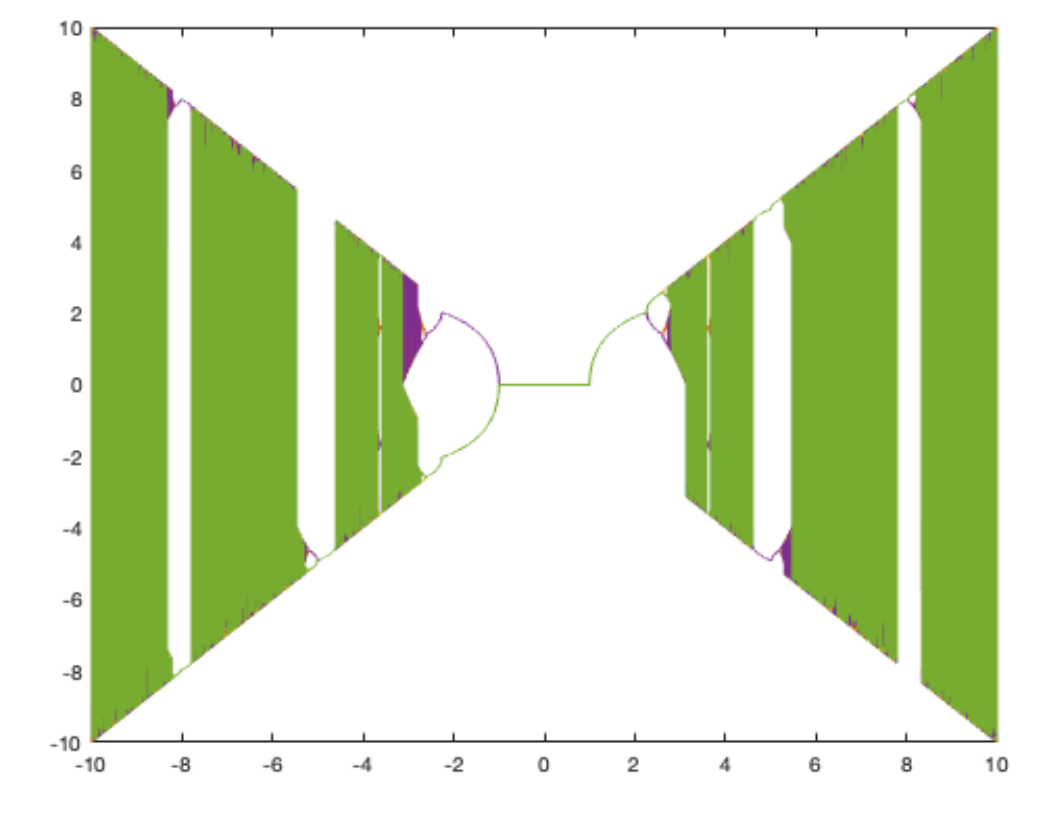}
\setlength{\abovecaptionskip}{-2pt}
\caption{\small  The bifurcation diagram of $f_c(x)=c\sin(x)$.  For each $c\in [-10,10]$,
the last 100 iterates from the set $\{f^k_c(\pi/2)\}_{k=0}^{1000}$ are drawn (in the vertical direction).
The bifurcation points for which only {\lq}one half of a parabola{\rq} is visible, correspond
to pitchfork bifurcations (where both critical values are attracted to a single parabolic periodic point), and where two halves are visible correspond
to period doubling bifurcations. Note that $\sin \in \mathcal{E}_o$, where $D=D_+=V=\C$.} \label{fig1}
\end{figure}

%Then let $\hat h_\lambda \colon K\to \C$ be defined
%by $$G_{h_\lambda(c_1)}(\hat h_\lambda(x))=h_\lambda(g(x)).$$
%Note that $\hat h_\lambda$, $\lambda\in \D_\epsilon$ is well-defined and is also a holomorphic motion and also $\hat h_\lambda(0)=0$.
%For all $x\in K$,
%$$g(\hat h_\lambda(x))=h_\lambda(g(x))-(h_\lambda(c_1)-c_1).$$
%Let $h^0_\lambda=h_\lambda \colon K\to U$ and let
%$\{h_\lambda^n\}$ be the successive lifts of $h_\lambda$, $\forall \lambda \in \mathbb D_\epsilon$.
%
%Since $U$ is bounded, there exists a ball $B$ centered at $0$ such that
%$h_\lambda^n(x)\in B$ for all $n\ge 0$, $x\in K$, $\lambda\in \D_\epsilon$.
%\begin{prop}
%Fix $\epsilon_0\in (0,\epsilon/2)$.
%Let $$\hat h_n\colon \mathbb D_{\epsilon_0}\times K \to \C$$
%where $\hat h_n(\lambda,x)=h_\lambda^n(x)$.
%Then $\{\hat h_n\}_{n=0}^\infty$ is equicontinuous.
%\end{prop}
%\begin{proof}
%Note that $\hat h_n(\lambda,0)=0$ for all $n$, $\lambda$. Fix $a,b \in \C\setminus \overline{B}$, $a\ne b$.
%Extend each $h^n_\lambda\colon K\to B$ to a holomorphic  motion of $\C$ (denoted also $h_\lambda^n$) such that
%$h_\lambda^n$ fixed $a,b,\infty$.
%Since $\epsilon_0\in (0,\epsilon/2)$, there exists $K_0$ such that for every $n\ge 0$ and $\lambda\in \D_{\epsilon_0}$, $h_\lambda^n:\C\to \C$
% is $K_0$-quasiconformal. Therefore, $\{\hat h_n\colon \mathbb D_{\epsilon_0}\times K \to B\}$ is sequentially compact w.r.t. the uniform convergence on $K$. Therefore, it is equicontinuous.
% \end{proof}
%
%\subsection{??? }

\appendix
%\marginpar{L25/12:added}
\section{In the real parabolic case $\frac{\partial G^q_w}{\partial w} (a_0)\big\vert_{w=c}\ge 0$ holds}

%\marginpar{Paragraph changed}
We assume that $g$ is real marked map w.r.t. $c_1$ so that
the sequence $c_{n}:=g^{n-1}(c_1)$, $n=1,2,\dots$ tends to a non-degenerate parabolic periodic orbit $\mathcal O:=\{a_0,\dots,a_{q-1}\}$
with multiplier $+1$.
Consider a holomorphic deformation $(g,G)_W$ %\marginpar{removed $W=\C$?}
and assume that $(g,G)_W$ has the lifting property for the set $P=\{c_n\}_{n\ge 1}$
 (notice that this condition is local and weaker than the one of the Main Theorem).
%\marginpar{GG:changes below}
Under these assumptions we will show that
 $Q(c_1)=\frac{\partial G^q_w}{\partial w}\big\vert_{w=c_1} (a_0)\ge 0$.  %\marginpar{added Q}

This Appendix will also motivate the choice of the particular vector field along $P$ appearing in Section~\ref{sec:ThmA}.
% and motive the choice of a particular
% vector field along $P$.
%In the remainder of the paper we will show that in fact  $\frac{\partial G^q_w}{\partial w}\bv_{w=c} (a_0)\ne  0$.

%\marginpar{are these two \\formulas needed?}
From~(\ref{eq:partGL})  it follows that the 2nd equality holds in
 $$\Delta(z):= \sum_{j=1}^q \dfrac{L(g^{j-1}(z))}{Dg^j(z)}=\dfrac{1}{Dg^q(z)}\dfrac{\partial G^q_w}{\partial w}\bv_{w=c} (z)$$
Since $Dg^q(a_0)=1$,
$$\Delta(a_0)=\dfrac{\partial G^q_w}{\partial w}\bv_{w=c} (a_0) = \sum_{j=1}^q \dfrac{L(g^{j-1}(a_0))}{Dg^j(a_0)} = \sum_{j=0}^{q-1} \dfrac{L(g^j(a_0))}{Dg^{j+1}(a_0)}  .$$

\begin{prop}\label{prop:ge0}   Assume that $(g,G)_W$ has the lifting property for the set $P$. %\marginpar{changed}
%Let $G_w$ be a family satisfying the assumptions for one of the theorems in the previous section.
Moreover, assume that $g$ is real, has a periodic point  $a_0$ of period $q$ with $Dg^q(a_0)=1$, $D^2g^q(a_0)\neq 0$ so that
$c_1$ is in the basin of $a_0$ and so that $Dg^q(c_{kq+1})>0$ for each $k\ge 0$. %~\marginpar{added}
Then
$$\dfrac{\partial G^q_w}{\partial w}\bv_{w=c} (a_0)\ge 0.$$
\end{prop}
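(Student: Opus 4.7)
The plan is to argue by contradiction. Suppose $\Delta(a_0)<0$. The key auxiliary object is the formal series
\begin{equation}\label{eqn:Drho-plan}
D(\rho):= 1 + \sum_{n=1}^{\infty} \frac{\rho^n L(c_n)}{Dg^n(c_1)},\qquad \rho\in(0,1),
\end{equation}
where $L(z)=\partial_w G_w(z)|_{w=c_1}$ as in (\ref{eqn:Lz}). The first task will be to establish that $D(\rho)$ is a well-defined, continuous function on $(0,1)$ and that $D(\rho)>0$ for every $\rho\in(0,1)$. The second task will be to derive a contradiction with the assumption $\Delta(a_0)<0$ by estimating $D(\rho)$ as $\rho\nearrow 1$ via block decomposition.

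For the positivity of $D(\rho)$, I would follow the strategy of \cite{Le1}: assume for contradiction that $D(\rho_0)=0$ for some $\rho_0\in(0,1)$ (continuity of $D$ combined with $D(0)=1>0$ and non-vanishing on the real interval would then give the positivity). Using $D(\rho_0)=0$ one shows that the tangent vector field
$$v_{\rho_0}(c_n):= -\sum_{j=1}^{\infty}\frac{\rho_0^{\,j}L(c_{n+j-1})}{Dg^{j}(c_n)}$$
is well defined and Lipschitz on $P$; Lipschitz continuity is established by computing $V'_{\rho_0}$ on suitable petals of $\mathcal O$ using the Leau--Fatou bound $|Dg^{kq}(x)|\gtrsim 1/k^2$ together with the boundedness of $D^2g/Dg$, and then noting that $\Delta\setminus P$ has only finitely many points. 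The Lipschitz property allows one to realise $v_{\rho_0}$ as the initial velocity of a holomorphic motion $h_\lambda$ of $P$. The lifting property of $(g,G)_W$ then forces the sequence $h_\lambda^{(n)}$ to be a normal family; but a direct computation from $\rho_0\mathcal A v_{\rho_0}=v_{\rho_0}$ shows that the $n$-th lift has initial velocity $\rho_0^{-n}v_{\rho_0}$, which is unbounded since $\rho_0<1$ and $v_{\rho_0}\not\equiv 0$. This contradicts normality. This step is the main technical obstacle.

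Once $D(\rho)>0$ on $(0,1)$ is in hand, I would group the terms of the series in (\ref{eqn:Drho-plan}) into blocks of length $q$:
\begin{equation*}
D(\rho)= 1 + \sum_{k=0}^{\infty}\frac{\rho^{qk}}{Dg^{qk}(c_1)}\,B_k(\rho), \qquad B_k(\rho):=\sum_{j=1}^{q}\frac{\rho^{j}L(c_{qk+j})}{Dg^{j}(c_{qk+1})}.
\end{equation*}
Since $Dg^{q}(c_{kq+1})>0$ by hypothesis and $\mathcal O$ is a non-degenerate parabolic cycle with multiplier $+1$, the Leau--Fatou flower theorem gives $Dg^{q}(c_{kq+1})=1-2/k+O(k^{-2})$ and hence $M_k:=k^{2}Dg^{qk}(c_1)\to M>0$. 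Moreover, continuity of $L$ and $g$ at $a_0$ yields $B_k(\rho)\to \Delta(a_0)$ as $k\to\infty$ and $\rho\to 1^-$. Under the standing assumption $\Delta(a_0)<0$, there exist $\rho_0\in(0,1)$ and $k_0$ with $B_k(\rho)\le \Delta(a_0)/2<0$ for all $k\ge k_0$ and $\rho\in(\rho_0,1)$, and $M_k\ge M/2$ for $k\ge k_0$. Combining these bounds,
\begin{equation*}
D(\rho)\le 1+\sum_{k=0}^{k_0}\frac{k^{2}\rho^{qk}}{M_k}B_k(\rho)+\frac{\Delta(a_0)}{4M}\sum_{k=k_0+1}^{\infty}k^{2}\rho^{qk}.
\end{equation*}
The finite prefix is bounded, but the tail satisfies $\sum_{k>k_0}k^{2}\rho^{qk}\to\infty$ as $\rho\nearrow 1$. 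Thus $\liminf_{\rho\nearrow 1}D(\rho)=-\infty$, contradicting $D(\rho)>0$. Therefore $\Delta(a_0)\ge 0$, and since $Dg^q(a_0)=1$ the identity $\Delta(a_0)=\partial_w G^q_w|_{w=c_1}(a_0)$ finishes the proof.
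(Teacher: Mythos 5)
Your proposal follows essentially the same route as the paper: the positivity of $D(\rho)$ on $(0,1)$ is exactly the paper's Proposition~\ref{prop:32} (proved there via the vector field $v_\rho$ with $\rho\mathcal A v_\rho=v_\rho$, its Lipschitz property from the Leau--Fatou estimates, and the normal-family contradiction $\rho^{-n}v_\rho$ forced by the lifting property), and your block decomposition into $B_k(\rho)$ with $M_k=k^2Dg^{qk}(c_1)\to M>0$ and the divergence of $\sum_{k>k_0}k^2\rho^{qk}$ as $\rho\nearrow 1$ is precisely the paper's argument for deducing the contradiction from $\Delta(a_0)<0$. The plan is correct as stated.
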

\begin{proofof}{Proposition~\ref{prop:ge0}} By  Proposition~\ref{prop:32}, see also \cite{Le1},
\begin{equation}
D(\rho):=1+\sum_{n=1}^\infty \dfrac{\rho^nL(c_n)}{Dg^n(c_1)} >0\mbox{ for all } 0<\rho < 1.
\label{drho}
\end{equation}
% and $L(x)=\frac{\partial G_w(x)}{\partial w}$. %$L_i=\frac{\partial g_w(c_i)}{\partial w}$.

%Let $a_0$ be a parabolic periodic point which has
%$c$ in its immediate basin  with minimal period $q$ and assume that $Dg^q(a_0)=1$ \marginpar{$p\to q$\\multiple times}
%and $D^2g^q(a_0)\ne 0$.

%Assume without loss of generality that $c_{kq+1}\to a_0$ as $k\to\infty$.
Arguing by contradiction, assume that $\Delta(a_0)<0$. %\marginpar{L:phrase deleted \\as it repeats\\cond.of Prop}
Then there exists $\rho_0\in (0,1)$ and $k_0\ge 1$ such that for each $\rho_0<\rho<1$ and each $k\ge k_0$, we have
\begin{equation}\label{eqn:Bkrho}
B_k(\rho):=\sum_{j=1}^q \dfrac{\rho^j L(c_{qk+j})}{Dg^{j}(c_{qk+1})}< \Delta(a_0)/2.
\end{equation}

%Let $A\not=0$ be such that $g^q(z+a_0)-a_0= z+A z^2+O(z^3)$.  Then
%$c_{kq+1}-a_0=\dfrac{1}{Ak} + O(k^{-2})$, and
%$Dg^q(c_{kq+1})= 1- \frac{2}{k} +O(k^{-2}).$
%It follows that
By assumption, $M_k:=k^2 Dg^{kq}(c_1)>0$ holds for all $k\ge 0$. By Leau-Fatou Flower theorem, see Appendix B, $$Dg^q(c_{kq+1})=1-2/k +O(\log k \,\, k^{-2}),$$ 
so $M:=\lim_{k\to\infty} M_k>0$ exists. Enlarging $k_0$, we have $M_k> M/2$ for all $k\ge k_0$.
Then
\begin{align*}
D(\rho) &= 1+\mathlarger{\sum}_{n=1}^\infty \dfrac{\rho^nL(c_n)}{Dg^n(c)}  = 1+  \mathlarger{\sum}_{k=0}^\infty \mathlarger{\sum}_{j=1}^{q}\dfrac{\rho^{qk+j}L(c_{qk+j})}{Dg^{qk+j}(c)}=\\
  & \\
  &=1+\mathlarger{ \mathlarger{\sum}}_{k=0}^\infty \left[ \frac{\rho^{qk}}{Dg^{qk}(c_1)} \sum_{j=1}^{q}  \dfrac{\rho^j L(c_{qk+j})}{Dg^{j}(c_{qk+1})} \right] =1+\mathlarger{ \mathlarger{\sum}}_{k=0}^\infty \frac{k^2\rho^{qk}}{M_k} B_k(\rho)\\
  & \le 1+\mathlarger{\sum}_{k=0}^{k_0} \frac{k^2\rho^{qk}}{M_k} B_k(\rho) +\frac{\Delta(a_0)}{M} \sum_{k=k_0+1}^\infty k^2 \rho^{qk},
\end{align*}
provided that $\rho_0<\rho<1$.
\iffalse
  %converges to some non-zero real number $M$ as $k\to\infty$. Since $Dg^q(c_{kq+1})>0$ holds for all $k\ge 0$, we have $M>0$.
%where the sign of $C\neq 0$ depends on whether $a_0$ is attracting from the right or from the left, and
%~\marginpar{Change}
%$$M:=\lim_{k\to\infty} k^2 Dg^{kq}(c_1)>0$$
%exists.
%=-\dfrac{C}{k^2} + o(\frac{1}{k^2}).$$
For each $j$, since $Dg^{qk+j}(c_1)= Dg^{j}(g^{qk}(c_1)) Dg^{qk}(c_1) $
there exists $B_j\in \R$ such that
$$Dg^{kq+j}(c_1)=Dg^{j}(a_0)\left(1+\dfrac{B_j}{k}+ o(1/k)\right)
 \left(\dfrac{M}{k^2} + o(1/k^2)\right).$$

Let
Using this and $L(c_{qk+j})=L(a_{j-1})+O(1/k)$, we obtain

 \begin{equation*}
 \begin{array}{rl} D(\rho) &= 1+\mathlarger{\sum}_{n=1}^\infty \dfrac{\rho^nL(c_n)}{Dg^n(c)}  = 1+ \mathlarger{\sum}_{j=1}^{q} \mathlarger{\sum}_{k=0}^\infty \dfrac{\rho^{qk+j}L(c_{qk+j})}{Dg^{qk+j}(c)}=\\
  & \\
  &=1+\mathlarger{ \mathlarger{\sum}}_{k=0}^\infty \left[ \frac{\rho^{qk}}{Dg^{qk}(c_1)} \sum_{j=1}^{q}  \dfrac{\rho^j L(c_{qk+j})}{Dg^{j}(c_{qk+1})} \right] \\
  &=1+
\mathlarger{ \mathlarger{\sum}}_{k=0}^\infty \left[\dfrac{k^2\rho^{qk}}{M_k} \underbrace{\sum_{j=1}^{q} \dfrac{\rho^j (L(a_{j-1})+O(1/k))}{Dg^{j}(a_0)}}_{B_k(\rho)} \right]=\\
   &=1+
\mathlarger{ \mathlarger{\sum}}_{k=0}^\infty \left[\dfrac{k^2\rho^{qk}}{\left(M+ o(1)\right) }
\underbrace{\left(
\Delta(a_0)+  \mathlarger{\sum}_{j=1}^{q} \dfrac{(\rho^j-1) L(a_{j-1})}{Dg^{j}(a_0)} + \rho^jO(1/k) \right)}_{=B_k(\rho)}  \right].
 \end{array} \end{equation*}
 Now assume by contradiction that $\Delta(a_0)<0$. Then there exists $k_0$ and $\rho_0\in (0,1)$ so that
 $B_k(\rho)\le \Delta(a_0)/2<0$ for $k\ge k_0$ for all $\rho\in  [\rho_0,1]$. Moreover,  $\sup B_k \le B_*$.
 \fi
Since $\sum_{k=k_0+1}^\infty k^2 \rho^{qk} \to \infty$ as $\rho\to 1$, this implies that $\liminf_{\rho \nearrow 1} D(\rho)=-\infty$,  a contradiction
with $D(\rho)>0$ for all $\rho \in (0,1)$.
 \end{proofof}

\subsection{A vector field $v_\rho$  along $P$ so that $\rho \mathcal A v_\rho = v_\rho$}
%\marginpar{changes}
\begin{prop}\label{prop:32}
Let $g$ be a marked map w.r.t. $c_1$, and  $P=\{c_n\}_{n\ge 1}$ converges to the periodic orbit $\mathcal O$ of $g$
which has the multiplier $+1$ and not degenerate.
Consider a holomorphic deformation $(g,G)_W$.  %\marginpar{removed $W=\C$?}
  Assume that $(g,G)_W$ has the lifting property for the set $P$.
  Then for all $|\rho|<1$ one has $$1+\sum_{n\ge 1}  \dfrac{\rho^nL(c_n)} {Dg^n(c_1)}\ne 0$$
 where $L(x)=\frac{\partial G_w}{\partial w}|_{w=c_1}(x)$.
\end{prop}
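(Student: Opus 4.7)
The plan is to argue by contradiction. Suppose that for some $\rho$ with $0<|\rho|<1$ one has
$1+\sum_{n\ge 1} \rho^n L(c_n)/Dg^n(c_1)=0$. Define the vector field $v_\rho$ on $P$ by $v_\rho(c_1)=1$ and, for $n\ge 2$,
\[
v_\rho(c_n) := \frac{Dg^{n-1}(c_1)}{\rho^{n-1}}\sum_{k=0}^{n-1}\frac{\rho^k L_k}{Dg^k(c_1)},
\]
with $L_0=1$ and $L_k=L(c_k)$. A direct computation shows that $v_\rho$ satisfies the coboundary identity $L(c_i)+Dg(c_i)\,v_\rho(c_i) = \rho^{-1} v_\rho(c_{i+1})$; equivalently, if $\mathcal{A}$ denotes the operator that sends the initial tangent vector of a holomorphic motion of $P$ to the initial tangent vector of its lift under $(g,G)_W$, then $\rho\,\mathcal{A}\,v_\rho = v_\rho$. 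The vanishing hypothesis allows us to rewrite $v_\rho$ by the telescoping identity $\sum_{k=0}^{n-1}=-\sum_{k\ge n}$, yielding
\[
v_\rho(c_n) = -\sum_{j=1}^\infty \frac{\rho^j L(c_{n+j-1})}{Dg^j(c_n)}, \qquad n\ge 1.
\]

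The core technical step is to show that this backward sum makes sense and defines a \emph{Lipschitz} function on $P$. For this, I would extend $v_\rho$ to a smooth function $V_\rho$ defined on the union of a finite tail of $P$ with attracting petals $\Delta_0,\ldots,\Delta_{q-1}$ at $a_0,\ldots,a_{q-1}$ guaranteed by Lemma~\ref{lem:Omega}, and estimate $V_\rho$ and $V_\rho'$ termwise. Since $\mathcal{O}$ is a non-degenerate parabolic orbit, the Leau--Fatou Flower Theorem provides a uniform polynomial lower bound $|Dg^{kq}(x)|\ge M_1/k^2$ on each convex petal $\Delta_j$; combined with the fact that $L$, $L'$ and $D^2g/Dg$ are bounded on a neighborhood of $\overline{P}\cup\overline{\mathcal{O}}$, and with the formula
\[
V_\rho'(x) = \sum_{j=1}^\infty \rho^j\left[\frac{L'(g^{j-1}(x))}{Dg(g^{j-1}(x))} - \frac{L(g^{j-1}(x))}{Dg^j(x)}\sum_{i=0}^{j-1}\frac{D^2g(g^i(x))}{Dg(g^i(x))}\right],
\]
the exponential decay from $|\rho|^j$ dominates the polynomial growth of $1/|Dg^j|$ and of the inner sum, giving uniform bounds $|V_\rho|,|V_\rho'|\le K$ on the petals. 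Since only finitely many $c_n$ sit outside $\bigcup_j\Delta_j$ and each $\Delta_j$ is convex, this suffices to conclude that $v_\rho\colon P\to\C$ is Lipschitz.

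Lipschitz continuity of $v_\rho$ on $P$ means $h_\lambda(c_i):=c_i+\lambda\, v_\rho(c_i)$ is injective on $P$ for small $|\lambda|$, hence defines a holomorphic motion of $P$ over some disk $\D_\eps$. The lifting property then produces successive lifts $h_\lambda^{(0)}=h_\lambda,\, h_\lambda^{(1)},\, h_\lambda^{(2)},\ldots$, all defined on a common disk $\D_{\eps'}$ and uniformly bounded by some $M$ (condition (3) in Definition~\ref{def:liftingproperty}). Writing $h_\lambda^{(n)}(c_i)=c_i+\lambda\, v_\rho^{(n)}(c_i)+O(\lambda^2)$, the identity $\rho\,\mathcal{A}\,v_\rho = v_\rho$ forces $v_\rho^{(n)} = \rho^{-n} v_\rho$. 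Since $v_\rho(c_1)=1$ and $|\rho|<1$, the initial speeds $v_\rho^{(n)}(c_1)$ blow up as $n\to\infty$, which is incompatible with the uniform boundedness of the family $\{h_\lambda^{(n)}\}$ on $\D_{\eps'}$. This contradiction completes the proof.

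The main obstacle will be the Lipschitz estimate for $v_\rho$: without the assumption $|\rho|<1$ (strictly) one cannot afford the polynomial blow-up $1/|Dg^j|\sim j^2$ coming from the parabolic point, and the bound on $V_\rho'$ requires careful bookkeeping of the telescoping sum $\sum_i D^2g/Dg = D\log Dg^n$ on the petals. Everything else—the coboundary/eigenvector identity $\rho\mathcal{A}v_\rho=v_\rho$, the rewriting via the vanishing hypothesis, and the final normality contradiction—is essentially formal once the Lipschitz estimate is in place.
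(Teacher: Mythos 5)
Your proposal is correct and follows essentially the same route as the paper's own proof: the eigenvector identity $\rho\,\mathcal{A}v_\rho=v_\rho$ for the explicit vector field $v_\rho$, the rewriting of $v_\rho(c_n)$ as a backward sum via the vanishing hypothesis, the Lipschitz estimate on the attracting petals (paper's Lemma~\ref{liprho}, using the polynomial lower bound on $|Dg^{kq}|$ from the Leau--Fatou Flower against the geometric factor $|\rho|^j$), and the final contradiction with the uniform boundedness of the lifts $h_\lambda^{(n)}$ guaranteed by the lifting property. No substantive differences; your version of the formula for $V_\rho'$ (with inner sum up to $j-1$) even fixes a small indexing slip in the paper.
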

\begin{proof}
Let us for the moment assume that $h_\lambda(c_i)=c_i+v_i\lambda + O(\lambda^2)$ defines a holomorphic motion of $P$. %\marginpar{added $+O(\lambda^2)$}
Then its lift $\hat h_\lambda(c_i)=c_i+\hat v_i \lambda + O(\lambda^2)$ is defined for $|\lambda|$ small and
$$G_{h_\lambda(c_1))}(\hat h_\lambda(c_i))=h_\lambda(c_{i+1})=c_{i+1}+v_{i+1}\lambda+O(\lambda^2).$$
Writing $D_i=Dg(c_i)$
we obtain %$$\hat v_0=0, \quad
$$L_i v_1 + D_i\hat v_i = v_{i+1}, i\ge 1$$
where $L_i=L(c_i)$.
%Note that for the deformation $G$ we have that $L_i=1$.
Taking $v=(v_1,v_2,\dots)$ and $\hat v=(\hat v_1,\hat v_2,\dots)$  we have that $\hat v=\mathcal{A}v$ where
$$\mathcal{A}=\left( \begin{array}{cccccc}
 -L_1/D_1 & 1/D_1  & 0  & \dots &  \dots  & \dots  \\
 -L_2/D_2   & 0 & 1/D_2 & 0 & \dots  & \dots \\
-L_3/D_3 & 0 &  0 & 1/D_3 & \dots   & \dots  \\ \vdots & \vdots & \vdots & \vdots & \ddots & \vdots   %\\
% - L_{q-2}/D_{q-2} & 0 & 0 & \dots & 1/D_{q-2} & \dots %\\
%1/D_{q-1} &  -L_{q-1}/D_{q-1}  & 0 & 0 & \dots &  0
\end{array}\right).$$

Assume $\rho\ne 0$ and consider the vector field $v_\rho$ on $P$ defined for $n>1$ by
\begin{equation*} \begin{array}{rl}
v_\rho(c_n) &:=\dfrac{Dg^{n-1}(c_1)}{\rho^{n-1}} \sum_{k=0}^{n-1} \dfrac{\rho^kL_k}{Dg^k(c_1)}=\\
& \\
&=L_{n-1}+\dfrac{Dg(c_{n-1})L_{n-2}}{\rho} + \dfrac{Dg^2(c_{n-2})L_{n-3}}{\rho^2 }+ \dots + \dfrac{Dg^{n-1}(c_1)L_0}{\rho^{n-1}}
\end{array}\end{equation*}
(with $L_0=1$)
and $v_\rho(c_1)=1$.
Notice that for this vector field we get $\rho \mathcal A v_\rho = v_\rho$.

Assume,  by contradiction,  that for some $0<|\rho|<1$,
$$1+\sum_{n\ge 1}  \dfrac{\rho^nL(c_n)} {Dg^n(c_1)}= 0.$$
Then we have that
$$v_\rho(c_n)= - \dfrac{Dg^{n-1}(c)}{\rho^{n-1}} \sum_{k=n}^{\infty} \dfrac{\rho^kL(c_k)}{Dg^k(c)}=  - \sum_{j=1}^\infty \dfrac{\rho^jL(c_{n+j-1})}{Dg^j(c_n)}. $$
For simplicity write $v_{i,\rho}=v_\rho(c_i)$. In the next lemma we will show that $v_\rho$ defines a Lipschitz vector field.
Because of this,
%as we show in Lemma~\ref{lem:liphol},
$v_\rho$ defines a holomorphic motion $h_{\lambda,\rho}$ for $|\lambda|<\epsilon$
for some $\epsilon>0$.  As $\rho$ is fixed, let us write $h_\lambda=h_{\lambda,\rho}$.
Since $(g,G)_W$ has the lifting property, it follows that the consecutive sequence of lifts $h^{(n)}_\lambda$
of $h_\lambda$ form a normal family. Write $h_\lambda^{(n)}(x)=x+\lambda v^{(n)}_\rho(x)  + O(\lambda^2)$. Then $v^{(n)}_\rho=\dfrac{1}{\rho^n}v_\rho$
which, since $\rho<1$, contradicts that the family $h_\lambda^{(n)}$ forms a normal family.
%This contradiction shows that
%$$\sum_{n\ge 0}  \dfrac{\rho^nL_n} {Dg^n(c_1)}\ne  0\mbox{ for all }|\rho|<1.$$
\end{proof}

 \begin{lemma}\label{liprho}
 %Assume that $c_i$, $i=1,2,\dots$ converges to a parabolic periodic orbit of $g$.  Then for $|\rho|<1$
 The vector field
 $$v_\rho(c_n):= \sum_{j=1}^\infty \dfrac{\rho^jL(c_{n+j-1})}{Dg^j(c_n)}$$ defined on the set $P=\{c_i\}_{i\ge 1}$
 is Lipschitz.
 \end{lemma}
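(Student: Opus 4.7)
The plan is to extend $v_\rho$ to a function $V_\rho$ defined on an appropriate domain containing $P$, bound $V_\rho$ and $V_\rho'$ uniformly there, and use convex geometry (of Leau--Fatou petals) together with finiteness of the remaining exceptional points to pass from the pointwise bound on the derivative to a Lipschitz estimate along $P$.

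First I would set
$$V_\rho(x)=\sum_{j=1}^\infty \dfrac{\rho^{j}L(g^{j-1}(x))}{Dg^{j}(x)}$$
defined at any $x$ whose forward orbit stays in $U$, so that $V_\rho(c_n)=v_\rho(c_n)$. Differentiating term by term using $\bigl(Dg^j\bigr)'/Dg^j=\sum_{i=0}^{j-1} D^2g(g^i(x))\,Dg^i(x)/Dg(g^i(x))$ gives an explicit series for $V_\rho'(x)$ in terms of $L$, $L'$, and $D^2g/Dg$ evaluated along the orbit of $x$, divided by $Dg^j(x)$.

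Next I would use the non-degenerate parabolic structure. By the Leau--Fatou Flower Theorem (Lemma~\ref{lem:leau-fatou} above) and an affine change of chart, for each $a_j\in\mathcal O$ one can choose an attracting petal $\Delta_j$ that is a convex open set with $a_j\in\partial\Delta_j$, such that $g^q(\Delta_j)\subset\Delta_j$ and such that all but finitely many points of $P$ lie in $\Delta:=\bigcup_{j=0}^{q-1}\Delta_j$; moreover the closures of the $\Delta_j$ are disjoint. Standard parabolic asymptotics give a constant $M_1>0$ with
$$|Dg^{kq}(x)|\ \ge\ \frac{M_1}{k^{2}}\quad\text{for all }x\in\Delta_j,\ k\ge 1,$$
and the function $D^2g/Dg$ is bounded on $\Delta$.

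Combining these ingredients I would bound $|V_\rho(x)|$ and $|V_\rho'(x)|$ for $x\in\Delta$. Writing $j=kq+r$ with $0\le r<q$, each generic term in the series for $V_\rho(x)$ (resp.\ $V_\rho'(x)$) is bounded by a constant times $k^{2}|\rho|^{kq+r}$ (resp.\ $k^{3}|\rho|^{kq+r}$), because each additional factor of $k$ can arise from the extra sum over $i$ in the derivative of $1/Dg^j$. Since $|\rho|<1$, both series converge uniformly on $\Delta$, yielding some $K>0$ with $|V_\rho(x)|\le K$ and $|V_\rho'(x)|\le K$ for every $x\in\Delta$. Because each component $\Delta_j$ is convex, the mean value estimate along the segment joining two points of $\Delta_j$ shows that $|V_\rho(x_1)-V_\rho(x_2)|\le K|x_1-x_2|$ for $x_1,x_2\in\Delta_j$. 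As only finitely many $c_n$ lie outside $\Delta$, and $P$ is bounded with no accumulation outside $\mathcal O$, one can enlarge $K$ once more to absorb the finitely many remaining pairs and obtain a single Lipschitz constant for $v_\rho$ on $P$.

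The main technical point to check carefully is the derivative bound: one must verify that the extra polynomial factor $k$ produced by differentiating $1/Dg^{kq+r}$ (via the sum $\sum_{i}D^2g(g^i(x))/Dg(g^i(x))$) is still absorbed by the geometric factor $|\rho|^{kq+r}$. This is where the hypothesis $|\rho|<1$ is essential and where the parabolic growth estimate $|Dg^{kq}(x)|\gtrsim k^{-2}$ combines with boundedness of $D^2g/Dg$ to give the desired convergence. Everything else is essentially bookkeeping.
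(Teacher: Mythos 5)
Your proposal is correct and follows essentially the same route as the paper's own proof: extend $v_\rho$ to $V_\rho$, differentiate the series term by term, use the convex attracting petals $\Delta_j$ with $g^q(\Delta_j)\subset\Delta_j$ containing all but finitely many points of $P$, bound $|V_\rho|$ and $|V_\rho'|$ uniformly on $\Delta$ via the parabolic lower bound on $|Dg^{kq}|$ and boundedness of $D^2g/Dg$, and conclude by convexity plus absorbing the finitely many exceptional points into the constant. The only cosmetic difference is that the paper phrases the key lower bound on $|Dg^j|$ in an equivalent exponential form ($1/|Dg^j(x)|\le C\rho^{-j/2}$) rather than your polynomial form $|Dg^{kq}(x)|\ge M_1/k^2$, but both serve the identical purpose of making the series for $V_\rho$ and $V_\rho'$ converge geometrically.
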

 \begin{proof}
 Given $x\in U$ such that $g^i(x)\in U$ for all $i\ge 1$,
 %are well-defined and tend to the cycle $\mathcal O$
 define $$V_\rho(x)=\sum_{j=1}^\infty \dfrac{\rho^jL(g^{j-1}(x))}{Dg^j(x)}.$$
 We have $V_\rho(c_n)=v_\rho(c_n)$, $n=1,2,\cdots$.
 %Notice that  if $Z(z)=\dfrac{1}{a_0(x)\dots a_{n-1}(x)}$ then
 %$$Z'(z)=\dfrac{-1}{a_0(x)\dots a_{n-1}(x)}\sum_{i=0}^{n-1} \dfrac{a_i'(x)}{a_i(x)}.$$
%
%Writing $Dg^n(x)=Dg(g^{n-1}(x))\dots Dg(x)$ this gives
%$$D [\dfrac{1}{Dg^n(x)}]=\dfrac{-1}{Dg^n(x)} \sum_{i=0}^{n-1} \dfrac{D^2g(g^i(x))}{Dg(g^i(x))}.$$
%Hence
Moreover,
\begin{equation}\label{eq:v'}
V_\rho'(x)= \sum_{j=1}^\infty \rho^j\left[ \dfrac{L'(g^{j-1}(x))}{Dg(g^{j-1}(x)}-\dfrac{L(g^{j-1}(x))}{Dg^j(x)}  \sum_{i=0}^{n-1} \dfrac{D^2g(g^i(x))}{Dg(g^i(x))}\right].
\end{equation}
Since the periodic orbit $\mathcal O=\{a_0,\cdots,a_{q-1}\}$ of $g: U\to \C$
has multiplier $+1$ and is not degenerate, by the proof of Lemma~\ref{lem:leau-fatou}  for each $a_j$ there is a convex set $\Delta_j$ in the basin of $\mathcal O$ with a boundary point $a_j$ such that $g^q(\Delta_j)\subset \Delta_j$. Moreover, the closures of $\Delta_j$, $0\le j\le q-1$ are pairwise disjoint, all but finitely many points of the orbit $P$ are in $\Delta:=\cup_{j=0}^{q-1} \Delta_j$.
Since $g^{nq}$ converges uniformly on $\Delta_j$ to $a_j$, it follows that $Dg^j(x)\le C\rho^{-j/2}$, where $C$ is a constant.
% and there exists $M_1>0$ such that, for each $x\in \Delta_j$, $0\le j\le q-1$, and each $k$:
%%$$|g^{kq}(x)-a_j|\le \frac{M}{k},$$
%$$|Dg^{kq}(x)|\ge \frac{M_1}{k^2}.$$
%Notice also that the function $D^2g/Dg=D(\log Dg)$ is bounded in $\Delta$.
These bounds along with the definition for $V_\rho$, (\ref{eq:v'}) and since $|\rho|<1$ imply that
for some $K>0$ and all $x\in \Delta$,
$$|V_\rho(x)|\le K, \ \ |V_\rho'(x)|\le K.$$
As each component $\Delta_j$ of $\Delta$ is convex (so that $x_1,x_2\in \Delta_j$ implies $|V_\rho(x_1)-V_\rho(x_2)|\le K|x_1-x_2|$) and only finitely many points of $P$ is outside of $\Delta$ we conclude that $V_\rho$ is Lipschitz on $P$.
\end{proof}

\section{The Leau-Fatou Flower}
Suppose that  $\mathcal{O}$ is a non-degenerate parabolic periodic orbit as above.
%In our consideration, we shall need a suitable Leau-Fatou flower. To make this precise, we f
Fix $\alpha\in (0,1)$.
%and  take an open set $\Omega$ containing $P$ as in the following lemma:
For each $j$, and $r>0$, define
$$\Theta_j^{\text{att}}=\{\theta\in [0,1): D^{\m +1} g^{\m q}(a_j) e^{2\pi i p\theta}\text{ is real and negative}\},$$
$$\Theta_j^{\text{rep}}=\{\theta\in [0,1): D^{\m+1} g^{\m q}(a_j) e^{2\pi i p\theta}\text{ is real and positive}\},$$
$$\cC_j(r)=\{a_j+se^{2\pi it}: 0<s<r, |t-\theta|<s^\alpha\text{ for some } \theta\in \Theta_j^{\text{rep}}\},$$
and
$$\cC'_j(r)=\{a_j+se^{2\pi it}: 0<s<r, |t-\theta|<s^\alpha\text{ for some } \theta\in \Theta_j^{\text{attr}}\},$$
%$$C_j(r)=\{a_j+ se^{2\pi it}: 0<s<r, |t-\theta|<s^{1+\alpha}\text{ for some } \theta\in \Theta_j^{\text{rep}}\}.$$

The following is a variation of the well-known Leau-Fatou Flower Theorem.
\begin{lemma}[Leau-Fatou Flower Theorem]\label{lem:leau-fatou} \label{lem:Omega}
%~\marginpar{SH:Fatou flower thm}
\begin{enumerate}
\item For each $r>0$, there exists $\tau=\tau(r)>0$ such that
$$B(a_j,\tau)\setminus \Omega_{r}\subset \mathcal{C}_j(\tau).$$
\item For any $r>\tau>0$ and $z_0\in \Omega_r^o$, there exists $n_0=n_0(z_0)$ such that 
$$g^n(z_0)\in \bigcup_j \cC'_j(\tau)$$ for all $n\ge n_0$.
\end{enumerate}
\end{lemma}
\begin{proof} This result is essentially contained in \cite{CG} or \cite{Milnor},
%\marginpar{Changed reference \\ to Milnor's \\book}
so we will contend ourself with a sketch in the case that $\mathcal{O}=\{0\}$ and $g(z)=z-z^{p+1}+O(z^{p+2})$. So $\Theta^{attr}=\{2\pi j/p: 0\le j<p\}$ and $\Theta^{rep}=\{\pi (2j+1)/p: 0\le j<p\}$.

Let us first prove (2). As described in \cite{Milnor} there are $p$ {\em attracting petals} $U_j$, $0\le j<p$, such that $U_j$ lies in the sector $(2j-1)\pi/\m<\theta< (2j+1)\pi/\m$ and such that for each $z_0\in U$ with $z_n:=g^n(z_0)\to 0$, $z_n\not=0$, there exists $n_0$ such that $z_n\in U_j$ for some $j$ and all $n\ge 0$. Let us prove that $z_n$ eventually lands in $\mathcal{C}'_j$. Indeed, assuming $j=0$ without loss of generality, and putting $w_n=-z_n^{-p}$, we have
$$w_{n+1}=w_n+1+O(|w_n|^{-1/p}).$$ From this, it is easy to check that for any $\tau>0$, $z_n\in \cC'(\tau)$ for all $n\ge n(\tau)$. The statement (2) is proved.

Let us prove the statement (1). First, by \cite{Milnor}, there exists $r_*>0$ such that if $\{w_{-n}\}_{n=0}^\infty$ is a $g$-backward orbit inside $\overline{B(0, r_*)}$, then $w_{-n}\to 0$. We may assume without loss of generality that $r\in (0, r_*)$. Next, we check that there exists $\tau_0>0$ such that for any $\tau\in (0,\tau_0)$ the set $\cC(\tau)$ is backward invariant under $g$: $g|\cC(\tau)$ is injective and $g(\cC(\tau)\supset \cC(\tau)$. Arguing by contradiction, assume that the statement (1) is false for some $r\in (0,r_*)$. Then for any $n\ge 1$, there is $z_{n}\in B(0,1/n)\setminus (\cC(1/n)\cup\{0\})$  and a minimal positive integer $k_n$ such that $g^{k_n}(z_n)\not\in B(0, r)$. Passing to a subsequence we may assume $g^{k_n-j}(z_n)\to w_{-j}$ as $n\to\infty$ for each $j$. Thus we obtain a $g$-backward orbit $\{w_{-j}\}_{j=1}^\infty$ with $w_{-j}\not\in \cC(\tau)$ and $w_{-j}\in \overline{B(0,r)}\setminus \cC(\tau)$ for $j\ge 1$. However, applying the statement (2) to $g^{-1}$, we see that this is impossible.
\end{proof}

\iffalse
\begin{mainthm-par}%\label{thm:par}
Assume that $(g,G)_W$ has the lifting property of $\overline{\Omega}_r$, for some $r>0$
then  the following alternative holds:
\begin{enumerate}
\item[(1)] $\delta'(c_1)\ne 0$
\item[(2)] $G_w$ has a parabolic cycle of period $q$ for all $w$ in a neighborhood of $c_1$.
\end{enumerate}
Moreover, if $(g,G)_W$ is real then $\delta'(c_1)>0$.
%
%the following positive transversality property holds:
%\begin{equation}\label{parabtrans2}
%\frac{\partial G_w^q}{\partial{w}}\bv_{c_1}(a_0)>0.
%\end{equation}
%\item[(2)] $G_w$ has a parabolic cycle of period $q$ for all $w$ in a neighborhood of $c_1$.
%\end{enumerate}
\end{mainthm-par}
\fi

%  parabolic periodic orbit $N$ unfolds tranversally under the deformation $G$:
%$$\frac{\partial G_w^q}{\partial{w}}\bv_{c_1}(a_0)\neq 0.$$
%\begin{coro}
%Conclusion of the second alternative of Theorem~\ref{parabtrans} holds for maps as in Theorem 4.1-4.2 of [1].
%\end{coro}

%\marginpar{L:two \\ refer. added}
\bibliographystyle{plain}             %    other options: alpha, plain, abbrv ...

\end{document}

\bibliographystyle{plain}             %    other options: alpha, plain, abbrv ...